\newtheoremstyle{my theoremstyle}
{1.0em}                    % Space above
    {1.0em}                    % Space below
    {\itshape}                   % Body font
    {}                           % Indent amount
    {\scshape}                   % Theorem head font
    {.}                          % Punctuation after theorem head
    {.5em}                       % Space after theorem head
    {}  % Theorem head spec (can be left empty, meaning ‘normal’)
\newtheoremstyle{dfn}
{1.0em}                    % Space above
    {1.0em}                    % Space below
    {}                   % Body font
    {}                           % Indent amount
    {\scshape}                   % Theorem head font
    {.}                          % Punctuation after theorem head
    {.5em}                       % Space after theorem head
    {}  % Theorem head spec (can be left empty, meaning ‘normal’)
\theoremstyle{my theoremstyle}
   \newtheorem{thm}{Theorem}[section]
   \newtheorem{lem}[thm]{Lemma}
   \newtheorem{prop}[thm]{Proposition}
   \newtheorem{cor}[thm]{Corollary}
\theoremstyle{dfn}
   \newtheorem{dfn}[thm]{Definition}
\theoremstyle{remark}   
   \newtheorem{rem}[thm]{{\scshape Remark}}
\newcommand{\C}{\mathbb{C}}
\newcommand{\Q}{\mathbb{Q}}
\newcommand{\Z}{\mathbb{Z}}
\newcommand{\R}{\mathbb{R}}
\newcommand{\F}{\mathbb{F}}
\renewcommand{\P}{\mathbb{P}}
\renewcommand{\a}{\alpha}
\renewcommand{\b}{\beta}
\renewcommand{\c}{\gamma}
\newcommand{\e}{\varepsilon}
\renewcommand{\d}{\delta}
\renewcommand{\k}{\kappa}
\newcommand{\khat}{\widehat{\k^\times}}
\newcommand{\p}{\varphi}
\newcommand{\ol}[1]{\overline{#1}}
\newcommand{\hF}[5]{{}_{#1}F_{#2}\left({#3\atop#4};#5\right)}
\newcommand{\FA}[4]{F_A^{(#1)}\left({#2\atop#3};#4\right)}
\newcommand{\FB}[4]{F_B^{(#1)}\left({#2\atop#3};#4\right)}
\newcommand{\FC}[4]{F_C^{(#1)}\left({#2\atop#3};#4\right)}
\newcommand{\FD}[4]{F_D^{(#1)}\left({#2\atop#3};#4\right)}
\begin{document}
\title[Appell-Lauricella hypergeometric functions over finite fields]{Appell-Lauricella hypergeometric functions over finite fields and algebraic varieties}
\author{Akio Nakagawa}
\date{\today}
\address{Department of Mathematics and Informatics, Graduate School of Science, Chiba University, Inage, Chiba, 263-8522, Japan}
\email{akio.nakagawa.math@icloud.com}
\keywords{Hypergeometric function; Appell-Lauricella function; Rational points.}
\subjclass{11T24, 33C90, 14J70}

\maketitle
\begin{abstract}
We prove finite field analogues of integral representations of Appell-Lauricella hypergeometric functions in many variables.
We consider certain hypersurfaces having a group action and compute the numbers of rational points associated with characters of the group, which will be expressed in terms of Appell-Lauricella functions over finite fields.
\end{abstract}

\section{Introduction}
Generalized hypergeometric functions ${}_{n+1}F_n(z)$ (the Gauss hypergeometric functions when $n=1$) over $\C$ are defined by the power series
\begin{equation*}
\hF{n+1}{n}{a_0,a_1,\dots,a_n}{b_1,\dots,b_n}{z}=\sum_{k=0}^\infty \dfrac{(a_0)_k(a_1)_k\cdots(a_n)_k}{(1)_k(b_1)_k\cdots(b_n)_k}z^k.
\end{equation*}
Here, $a_i,b_j$ are complex parameters  with $b_j\not\in\Z_{\leq0}$, and $(a)_k=\Gamma(a+k)/\Gamma(a)$ is the Pochhammer symbol. 
Lauricella's hypergeometric functions $F_D^{(n)}, F_A^{(n)}, F_B^{(n)}$ and $F_C^{(n)}$ with $n$ variables (Appell's functions $F_1, F_2, F_3$ and $F_4$ respectively, when $n=2$) are generalizations of the Gauss hypergeometric functions. 
For example, 
\begin{align*}
&\FD{n}{a;b_1,\dots, b_n}{c}{z_1,\dots,z_n}:=\sum_{k_i\geq 0} \dfrac{(a)_{k_1+\cdots+k_n}(b_1)_{k_1}\cdots(b_n)_{k_n}}{(c)_{k_1+\cdots+k_n}(1)_{k_1}\cdots(1)_{k_n}}z_1^{k_1}\cdots z_n^{k_n},
\end{align*}
where $a, b_i, c\in \C$ with $c\not\in\Z_{\leq0}$. 
These functions have integral representations of Euler type, such as 
\begin{align*}
&\FD{n}{a;b_1,\dots,b_n}{c}{z_1,\dots,z_n}\\
&=B(a,c-a)^{-1}\int_0^1 \Big(\prod_{i=1}^n(1-z_iu)^{-b_i}\Big) u^{a-1}(1-u)^{c-a-1}\,du.
\end{align*}

Over finite fields, one-variable hypergeometric functions were defined independently by Koblitz \cite{Koblitz}, Katz \cite{Katz}, Greene \cite{Greene}, McCarthy \cite{Mc}, Fuselier-Long-Ramakrishna-Swisher-Tu \cite{Fuselier} and Otsubo \cite{Otsubo}. 
Appell's functions were defined by Li-Li-Mao \cite{LLM}, He \cite{He}, He-Li-Zhang \cite{HLZ} and Ma \cite{Ma} as generalizations of Greene's functions, and were defined by Tripathi-Saikia-Barman \cite{TSB} as generalizations of McCarthy's functions. 
For general $n$, $F_D^{(n)}$ were defined by Frechette-Swisher-Tu \cite{FST} and He \cite{He-L}, and $F_A^{(n)}$ were defined by Chetry-Kalita \cite{CK} as generalizations of Greene's functions. Otsubo \cite{Otsubo} gave a definition of all the Lauricella functions, which will be used in this paper (see subsection \ref{subsection of hgf}).

In this paper, we prove finite field analogues of integral representations of $F_D^{(n)}$, $F_A^{(n)}$, $F_B^{(n)}$ and $F_C^{(n)}$ (Theorems \ref{FD int ana}, \ref{FA int ana}, \ref{FA int ana 2}, \ref{FB integral} and \ref{FC integral}). 
As a corollary, we prove a finite analogue of Karlsson's formula which relates certain $F_D^{(n)}$ with Gauss hypergeometric functions (Theorem \ref{Karlsson ana}).
Furthermore, we show a finite field analogue  (Theorem \ref{F4 int ana}) of an integral representation of $F_4(x(1-y),y(1-x))$ due to Burchnall-Chaundy \cite{B.C.I}.

The reason for the strong analogy between a hypergeometric function over $\C$ and a hypergeometric function over a finite field is that they come from a same algebraic variety.  
The former is the complex period of the variety and the latter is the trace of Frobenius acting on the $l$-adic \'etale cohomology.
By the Grothendieck-Lefschetz formula, the Frobenius trace is related with the number of rational points on the variety.
For example, one-variable hypergeometric functions, over $\C$ and over finite fields, are associated with the variety of the form
 \begin{equation*}
y^d=(1-\lambda x_1\cdots x_n)^{a_0}\prod_{i=1}^n x_i^{a_i}(1-x_i)^{b_i}.
\end{equation*}
By computing the number of its rational points over finite fields, Koblitz \cite{Koblitz} arrived at his definition of the hypergeometric function.
 
For the Appell-Lauricella functions, we find naturally corresponding algebraic varieties from the complex integral representations.  
For example, an algebraic curve $C_{D,\lambda}$ related to $F_D^{(n)}$ is given by
 \begin{equation*}
y^d=\Big( \prod_{i=1}^n (1-\lambda_i x)^{b_i} \Big) x^a(1-x)^c.
\end{equation*}
They admit an action of the group $\mu_d$ of $d$th roots of unity, and each of the numbers of $\k$-rational points decomposes into $\chi$-components for characters $\chi$ of $\mu_d$, where $\k$ is a finite field. 
By the analogues of integral representations mentioned above, such numbers are expressed in terms of Appell-Lauricella functions over $\k$ (Theorems \ref{N of CD}, \ref{N of XD}, \ref{N of XA}, \ref{N of XB}, \ref{N of SC} and \ref{N of X4}).

According to the decomposition of the numbers, each of the zeta functions decomposes into the Artin $L$-functions. As corollaries of the theorems, we express the Artin $L$-functions in terms of the Appell-Lauricella functions over $\k_r$ ($r\geq1$), where $\k_r$ is a degree $r$ extension of $\k$ (Corollaries \ref{cor 1}, \ref{L of SAB}, \ref{L of SC} and \ref{L of S4}).

Furthermore, under some conditions, we will closely look at the curve $X_{D,\lambda}$ which is a smooth projective model of $C_{D,\lambda}$. 
For each non-trivial character $\chi$ of $\mu_d$, using the result above, the Artin $L$-function $L(X_{D,\lambda},\chi;t)$ is written in terms of Lauricella functions $F_D^{(n)}(\lambda_1,\dots,\lambda_n)_{\k_r}$ over $\k_r$ ($r\geq 1$). 
By the Grothendieck-Lefschetz formula, the Artin $L$-function $L(X_{D,\lambda},\chi;t)$ is essentially the characteristic polynomial of the Frobenius acting on the $\chi$-eigenspace $H^1(X_{D,\lambda},\ol{\Q_l})(\chi)$ of the first $l$-adic \'etale cohomology.
By computing its dimension, we will show that the degree of $L(X_{D,\lambda},\chi;t)$ is $n+1$ (Theorem \ref{deg L}), and hence it follows that $F_D^{(n)}(\lambda_1,\dots,\lambda_n)_{\k_r}$ ($r\geq 1$) are written as symmetric polynomials of the first $n+1$ functions.

\section{Hypergeometric functions over finite fields}
Throughout this paper, let $\k$ be a finite field with $q$ elements of characteristic $p$. 
Let $\khat={\rm Hom}(\k^\times,\ol{\Q}^\times)$ denote the group of multiplicative characters of $\k$, and write $\e\in\khat$ for the trivial character. 
For any $\eta\in\khat$, we set $\eta(0)=0$ and write $\ol{\eta}=\eta^{-1}$. 
Put, for $\eta\in\widehat{\k^\times}$,
\begin{equation*}
\d(\eta)=\begin{cases} 1&(\eta=\e),\\ 0&(\eta\neq\e).\end{cases}
\end{equation*}

\subsection{Definitions}\label{subsection of hgf}
In this subsection, we recall definitions \cite{Otsubo} of hypergeometric functions over finite fields.

Fix a non-trivial additive character $\psi\in{\rm Hom}(\k,\ol{\Q}^\times)$. 
For $\eta,\eta_1,\dots,\eta_n  \in \widehat{\k^\times}$ ($n\geq 2$), {\it the Gauss sum} $g(\eta)$ and {\it the Jacobi sum} $j(\eta_1,\dots,\eta_n)$ are defined by 
\begin{align*}
g(\eta)&=-\sum_{x\in \k^\times} \psi(x)\eta(x)\ \in\Q(\mu_{p(q-1)}),\\
j(\eta_1,\dots,\eta_n)&=(-1)^{n-1}\sum_{\substack{x_i\in\k^\times\\ x_1+\dots+x_n=1}}\prod_{i=1}^n \eta_i(x_i)\ \in\Q(\mu_{q-1}).
\end{align*}
Note that $g(\e)=1$. 
Put $g^\circ(\eta)=q^{\d(\eta)}g(\eta)$. Then (cf. \cite[Proposition 2.2 (iii)]{Otsubo})
\begin{equation}\label{Gauss sum thm}
g(\eta)g^\circ(\ol{\eta})=\eta(-1)q.
\end{equation}
For $\eta_1,\dots,\eta_n\in\widehat{\k^\times}$, we have (cf. \cite[Proposition 2.2 (iv)]{Otsubo})
\begin{equation}\label{J=G}
j(\eta_1,\dots,\eta_n)=
\begin{cases}
\dfrac{1-(1-q)^n}{q}&(\eta_1=\cdots=\eta_n=\e),\vspace{5pt}\\
\dfrac{g(\eta_1)\cdots g(\eta_n)}{g^\circ(\eta_1\cdots\eta_n)}&({\rm otherwise}).
\end{cases}
\end{equation}
As an analogue of the Pochhammer symbol $(a)_n=\Gamma(a+n)/\Gamma(a)$, put 
\begin{align*}
(\a)_\nu=\dfrac{g(\a\nu)}{g(\a)},\ \ \ \ (\a)_\nu^\circ=\dfrac{g^\circ(\a\nu)}{g^\circ(\a)}
\end{align*}
for $\a,\ \nu\in\widehat{\k^\times}$. 
Then, these satisfy
\begin{align}\label{Poch formula}
(\a)_{\nu\mu}=(\a)_\nu(\a\nu)_\mu,\ \ \ (\a)_{\nu\mu}^\circ=(\a)_\nu^\circ(\a\nu)_\mu^\circ,
\end{align}
and 
\begin{equation}\label{Poch formula 2}
(\a)_\nu(\ol{\a})_{\ol{\nu}}^\circ=\nu(-1).
\end{equation}

\begin{dfn}
For $\a_0,\dots,\a_n,\b_1,\dots,\b_n\in\widehat{\k^\times}$, the hypergeometric function over $\k$ is defined by  
\begin{equation*}
\hF{n+1}{n}{\a_0,\a_1,\dots,\a_n}{\b_1,\dots,\b_n}{\lambda}=\dfrac{1}{1-q}\sum_{\nu\in\widehat{\k^\times}}\dfrac{(\a_0)_\nu(\a_1)_\nu\cdots(\a_n)_\nu}{(\e)_\nu^\circ(\b_1)_\nu^\circ\cdots(\b_n)_\nu^\circ}\nu(\lambda)\quad (\lambda\in \k).
\end{equation*}
\end{dfn}

\begin{dfn}
For $\a,\a_1,\dots,\a_n,\b,\b_1,\dots,\b_n,\c,\c_1,\dots,\c_n\in\widehat{\k^\times}$, Lauricella's functions over $\k$ are defined as follows. For $\lambda_1,\dots,\lambda_n\in\k$,
\begin{align*}
&\FA{n}{\a;\b_1,\dots,\b_n}{\c_1,\dots,\c_n}{\lambda_1,\dots,\lambda_n}\\
&=\dfrac{1}{(1-q)^n}\sum_{\nu_i\in\widehat{\k^\times}}\dfrac{(\a)_{\nu_1\cdots\nu_n}(\b_1)_{\nu_1}\cdots(\b_n)_{\nu_n}}{(\c_1)_{\nu_1}^\circ\cdots(\c_n)_{\nu_n}^\circ(\e)_{\nu_1}^\circ\cdots(\e)_{\nu_n}^\circ}\nu_1(\lambda_1)\cdots\nu_n(\lambda_n),\\
&\FB{n}{\a_1,\dots,\a_n;\b_1,\dots,\b_n}{\c}{\lambda_1,\dots,\lambda_n}\\
&=\dfrac{1}{(1-q)^n}\sum_{\nu_i\in\widehat{\k^\times}}\dfrac{(\a_1)_{\nu_1}\cdots(\a_n)_{\nu_n}(\b_1)_{\nu_1}\cdots(\b_n)_{\nu_n}}{(\c)_{\nu_1\cdots\nu_n}^\circ(\e)_{\nu_1}^\circ\cdots(\e)_{\nu_n}^\circ}\nu_1(\lambda_1)\cdots\nu_n(\lambda_n),\\
&\FC{n}{\a;\b}{\c_1,\dots,\c_n}{\lambda_1,\dots,\lambda_n}\\
&=\dfrac{1}{(1-q)^n}\sum_{\nu_i\in\widehat{\k^\times}}\dfrac{(\a)_{\nu_1\cdots\nu_n}(\b)_{\nu_1\cdots\nu_n}}{(\c_1)_{\nu_1}^\circ\cdots(\c_n)_{\nu_n}^\circ(\e)_{\nu_1}^\circ\cdots(\e)_{\nu_n}^\circ}\nu_1(\lambda_1)\cdots\nu_n(\lambda_n),\\
&\FD{n}{\a;\b_1,\dots,\b_n}{\c}{\lambda_1,\dots,\lambda_n}\\
&=\dfrac{1}{(1-q)^n}\sum_{\nu_i\in\widehat{\k^\times}}\dfrac{(\a)_{\nu_1\cdots\nu_n}(\b_1)_{\nu_1}\cdots(\b_n)_{\nu_n}}{(\c)_{\nu_1\cdots\nu_n}^\circ(\e)_{\nu_1}^\circ\cdots(\e)_{\nu_n}^\circ}\nu_1(\lambda_1)\cdots\nu_n(\lambda_n).
\end{align*}
Analogues of Appell's functions are defined by 
\begin{align*}
&F_1(\a;\b_1,\b_2;\c;\lambda_1,\lambda_2)=\FD{2}{\a;\b_1,\b_2}{\c}{\lambda_1,\lambda_2},\\
&F_2(\a;\b_1,\b_2;\c_1,\c_2;\lambda_1,\lambda_2)=\FA{2}{\a;\b_1,\b_2}{\c_1,\c_2}{\lambda_1,\lambda_2},\\
&F_3(\a_1,\a_2;\b_1,\b_2;\c;\lambda_1,\lambda_2)=\FB{2}{\a_1,\a_2;\b_1,\b_2}{\c}{\lambda_1,\lambda_2},\\
&F_4(\a;\b;\c_1,\c_2;\lambda_1,\lambda_2)=\FC{2}{\a,\b}{\c_1,\c_2}{\lambda_1,\lambda_2}.
\end{align*}
\end{dfn}

\begin{rem}A priori, the functions ${}_{n+1}F_n$, $F_A^{(n)}$, $F_B^{(n)}$, $F_C^{(n)}$ and $F_D^{(n)}$ are $\Q(\mu_{p(q-1)})$-valued, but in fact they take values in $\Q(\mu_{q-1})$ (see \cite[Lemma 2.5 (iii)]{Otsubo}).
\end{rem}

\begin{rem}\label{FA=FB} By (\ref{Poch formula 2}), (\ref{Poch formula}) and (\ref{Gauss sum thm}), one shows that, for $\lambda_i\in\k^\times$,
\begin{align*}
&\FB{n}{\a_1,\dots,\a_n;\b_1,\dots,\b_n}{\c}{\lambda_1,\dots,\lambda_n}\\
&=(\b_1\cdots\b_n)_{\ol{\c}}\Big(\prod_{i=1}^n (\a_i)_{\ol{\b_i}}\ol{\b_i}(\lambda_i)\Big) \FA{n}{\b_1\cdots\b_n\ol{\c};\b_1,\dots,\b_n}{\ol{\a_1}\b_1,\dots,\ol{\a_n}\b_n}{\dfrac{1}{\lambda_1},\dots,\dfrac{1}{\lambda_n}}.
\end{align*}
\end{rem}

\subsection{Properties}
We recall some formulas on ${}_{n+1}F_{n}$ which will be used in the next section.
\begin{prop}[{\cite[Corollary 3.4 and Corollary 3.6]{Otsubo}}]\label{integral rep nFn-1}$ $
\begin{enumerate}
\item For each $\a\in\widehat{\k^\times}$ and $\lambda\in\k^\times$,
\begin{equation*}
\hF{1}{0}{\a}{}{\lambda}=\begin{cases}
\ol{\a}(1-\lambda)&(\a\neq\e{\rm \ or\ }\lambda\neq1),\vspace{5pt}\\
1-q&(\a=\e{\rm \ and\ }\lambda=1).
\end{cases}
\end{equation*}

\item Suppose that $\b\neq\c$. Then, for $\lambda\neq0$,
\begin{align*}
-j&(\b,\ol{\b}\c)\hF{2}{1}{\a,\b}{\c}{\lambda}\\
&=\sum_{u\in\k^\times}\b(u)\ol{\b}\c(1-u)\ol{\a}(1-\lambda u)+\d(\a)(1-q)\ol{\c}(\lambda)\ol{\b}\c(\lambda-1).
\end{align*}
(The case when $\a=\e$ is not contained in \cite[Corollary 3.6]{Otsubo}, but one shows the case easily by Lemma \ref{2F1=1F0}.)
\end{enumerate}
\end{prop}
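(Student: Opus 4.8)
The plan is to derive both parts directly from the defining series, isolating the degenerate characters. For part (i), I would unfold $\hF{1}{0}{\a}{}{\lambda}=\frac{1}{1-q}\sum_{\nu\in\khat}\frac{(\a)_\nu}{(\e)_\nu^\circ}\nu(\lambda)$. When $\a=\e$, the quotient $(\e)_\nu/(\e)_\nu^\circ=q\,g(\nu)/g^\circ(\nu)$ equals $q$ for $\nu\neq\e$ and $1$ for $\nu=\e$, so summing against $\nu(\lambda)$ and using that $\sum_{\nu\in\khat}\nu(\lambda)$ equals $q-1$ or $0$ according as $\lambda=1$ or $\lambda\in\k^\times\setminus\{1\}$ gives $1$ when $\lambda\neq1$ and $1-q$ when $\lambda=1$. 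When $\a\neq\e$, I would rewrite $\frac{(\a)_\nu}{(\e)_\nu^\circ}=\frac{q\,g(\a\nu)}{g(\a)g^\circ(\nu)}$, use (\ref{Gauss sum thm}) in the form $g^\circ(\nu)=\nu(-1)q/g(\ol{\nu})$, and then invoke (\ref{J=G}) — whose exceptional ``all trivial'' branch cannot occur here precisely because $\a\neq\e$ — to identify $\frac{(\a)_\nu}{(\e)_\nu^\circ}=\nu(-1)\,j(\a\nu,\ol{\nu})$ for every $\nu$. Expanding $j(\a\nu,\ol{\nu})=-\sum_{u\in\k^\times}\a(u)\nu(u)\ol{\nu}(1-u)$ (the $u=1$ term being $0$ since $\ol{\nu}(0)=0$), interchanging the two sums, and collapsing the inner sum $\sum_{\nu\in\khat}\nu(t)$ with $t=(-\lambda u)/(1-u)$ — which is $q-1$ exactly when $u=1/(1-\lambda)$ and $0$ otherwise — leaves precisely $\ol{\a}(1-\lambda)$. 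This also records that, for $\a\neq\e$, the equality $\hF{1}{0}{\a}{}{\lambda}=\ol{\a}(1-\lambda)$ holds for all $\lambda\in\k^\times$, both sides vanishing at $\lambda=1$.

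For part (ii), the device is to first repackage (i) as follows: for every $\mu\in\k^\times$ one has $\ol{\a}(1-\mu)=\hF{1}{0}{\a}{}{\mu}$, except that when $\a=\e$ and $\mu=1$ the left side is $0$ whereas $\hF{1}{0}{\e}{}{1}=1-q$; thus $\ol{\a}(1-\mu)$ equals $\hF{1}{0}{\a}{}{\mu}$ plus an extra term, concentrated at $\mu=1$ and equal there to $\d(\a)(q-1)$. Substituting this with $\mu=\lambda u$ into the sum on the right-hand side of (ii), the extra term contributes $\d(\a)(q-1)\b(1/\lambda)\ol{\b}\c(1-1/\lambda)=\d(\a)(q-1)\ol{\c}(\lambda)\ol{\b}\c(\lambda-1)$, while for the main term I would insert the defining series of $\hF{1}{0}{\a}{}{\lambda u}$, swap the $\nu$- and $u$-summations, and evaluate the inner sum as a Jacobi sum, $\sum_{u\in\k^\times}\b\nu(u)\,\ol{\b}\c(1-u)=-j(\b\nu,\ol{\b}\c)$. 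Since $\b\neq\c$ forces $\ol{\b}\c\neq\e$, (\ref{J=G}) again applies without its exceptional case and yields $-j(\b\nu,\ol{\b}\c)=-j(\b,\ol{\b}\c)\,(\b)_\nu/(\c)_\nu^\circ$; summing over $\nu$ then reconstitutes $-j(\b,\ol{\b}\c)\hF{2}{1}{\a,\b}{\c}{\lambda}$. Assembling the two contributions and solving for $-j(\b,\ol{\b}\c)\hF{2}{1}{\a,\b}{\c}{\lambda}$ produces the asserted formula for every $\a$, the $\d(\a)$-term being the only surviving boundary contribution.

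The work lies not in any single hard step but in the degenerate bookkeeping: one must use the convention $\eta(0)=0$ consistently (it is precisely what kills the $u=1$ terms in the Jacobi sums), pinpoint where the ``all trivial'' branch of (\ref{J=G}) would intervene and check that $\a\neq\e$ in (i) and $\b\neq\c$ in (ii) exclude it, and verify that the only stray term surviving is $\d(\a)(1-q)\ol{\c}(\lambda)\ol{\b}\c(\lambda-1)$ — so that the identity in (ii) is clean exactly when $\a\neq\e$. As a shortcut, part (i) and the case $\a\neq\e$ of (ii) are \cite[Corollaries 3.4 and 3.6]{Otsubo}, while the remaining case $\a=\e$ of (ii) can alternatively be obtained from Lemma \ref{2F1=1F0} together with part (i), as indicated in the remark following the statement.
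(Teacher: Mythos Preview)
Your proposal is correct. The paper itself offers no proof here: it simply cites \cite[Corollaries 3.4 and 3.6]{Otsubo} for part (i) and for part (ii) with $\a\neq\e$, and remarks that the case $\a=\e$ of (ii) follows from Lemma~\ref{2F1=1F0} (which gives ${}_2F_1(\b,\e;\c;\lambda)$ explicitly, after which both sides of (ii) can be computed directly). You instead supply a self-contained argument from the defining series, and your bookkeeping is sound: in (i) the check that $\a\neq\e$ avoids the exceptional branch of \eqref{J=G} is exactly right, and in (ii) the hypothesis $\b\neq\c$ guarantees $\ol{\b}\c\neq\e$, so $j(\b\nu,\ol{\b}\c)=j(\b,\ol{\b}\c)\,(\b)_\nu/(\c)_\nu^\circ$ for every $\nu$. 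The correction term $\d(\a)(q-1)\ol{\c}(\lambda)\ol{\b}\c(\lambda-1)$ you isolate is precisely the one that moves to the other side to give the $\d(\a)(1-q)$ term in the statement. Your final paragraph already notes the shortcut via Lemma~\ref{2F1=1F0}, which is the paper's route; the advantage of your direct approach is that it treats all $\a$ uniformly and does not rely on the cited reference, at the cost of tracking the degenerate cases by hand.
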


\begin{prop}[cf. {\cite[Theorem 3.2]{Otsubo}}]\label{red. for.}
If $n\geq1$, 
\begin{align*}
&\hF{n+1}{n}{\a_1,\dots,\a_n,\c}{\b_1,\dots,\b_{n-1},\c}{\lambda}\\
&\hspace{30pt}=q^{\d(\c)}\left(\hF{n}{n-1}{\a_1,\dots,\a_n}{\b_1,\dots,\b_{n-1}}{\lambda}+\dfrac{1}{q}\cdot\dfrac{\prod_{i=1}^n (\a_i)_{\ol{\c}}}{(\e)_{\ol{\c}}^\circ \prod_{i=1}^{n-1}(\b_i)_{\ol{\c}}^\circ}\ol{\c}(\lambda)\right).
\end{align*}
\end{prop}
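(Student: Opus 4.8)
The plan is to prove the reduction formula by starting from the definition of the $(n+1)$-dimensional hypergeometric sum and isolating the dependence on the repeated parameter $\c$. Writing out
\[
\hF{n+1}{n}{\a_1,\dots,\a_n,\c}{\b_1,\dots,\b_{n-1},\c}{\lambda}
=\frac{1}{1-q}\sum_{\nu\in\khat}\frac{(\a_1)_\nu\cdots(\a_n)_\nu(\c)_\nu}{(\e)_\nu^\circ(\b_1)_\nu^\circ\cdots(\b_{n-1})_\nu^\circ(\c)_\nu^\circ}\nu(\lambda),
\]
the idea is to simplify the ratio $(\c)_\nu/(\c)_\nu^\circ$. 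By definition $(\c)_\nu=g(\c\nu)/g(\c)$ and $(\c)_\nu^\circ=g^\circ(\c\nu)/g^\circ(\c)$, so
\[
\frac{(\c)_\nu}{(\c)_\nu^\circ}=\frac{g(\c\nu)g^\circ(\c)}{g(\c)g^\circ(\c\nu)}=q^{\d(\c)-\d(\c\nu)}.
\]
Thus the general term of the sum equals $q^{\d(\c)}$ times the corresponding term of $\hF{n}{n-1}{\a_1,\dots,\a_n}{\b_1,\dots,\b_{n-1}}{\lambda}$, except when $\d(\c\nu)=1$, i.e. $\nu=\ol{\c}$, where the factor $q^{\d(\c)-\d(\c\nu)}$ differs. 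So I would split off the single term $\nu=\ol{\c}$.

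Next I would handle the correction term. For $\nu=\ol{\c}$ the factor $(\c)_\nu/(\c)_\nu^\circ$ equals $q^{\d(\c)-1}$ rather than $q^{\d(\c)}$, so the discrepancy between $q^{\d(\c)}\cdot\hF{n}{n-1}{\cdots}{\cdots}{\lambda}$ and the left-hand side is
\[
q^{\d(\c)}\cdot\frac{1}{1-q}\Bigl(q^{-1}-1\Bigr)\cdot\frac{(\a_1)_{\ol{\c}}\cdots(\a_n)_{\ol{\c}}}{(\e)_{\ol{\c}}^\circ(\b_1)_{\ol{\c}}^\circ\cdots(\b_{n-1})_{\ol{\c}}^\circ}\ol{\c}(\lambda).
\]
Since $\frac{1}{1-q}(q^{-1}-1)=\frac{1}{1-q}\cdot\frac{1-q}{q}=\frac{1}{q}$, this collapses exactly to $q^{\d(\c)}\cdot\frac{1}{q}\cdot\frac{\prod_{i=1}^n(\a_i)_{\ol{\c}}}{(\e)_{\ol{\c}}^\circ\prod_{i=1}^{n-1}(\b_i)_{\ol{\c}}^\circ}\ol{\c}(\lambda)$, which is the claimed extra term. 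Putting the two pieces together gives the formula.

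One subtlety I should check carefully is whether $\ol{\c}$ is really the \emph{unique} character $\nu$ with $\d(\c\nu)=1$: indeed $\c\nu=\e$ forces $\nu=\ol{\c}$, so this is fine, and when $\c=\e$ one has $\ol{\c}=\e$ and $\d(\c)=1$, and the computation above still goes through verbatim (the $\nu=\e$ term is then the one being corrected, and $q^{\d(\c)}=q$ appears uniformly). The main obstacle, such as it is, is purely bookkeeping: making sure the $n=1$ edge case (where there are no $\b_j$ and $\hF{1}{0}{}{}{}$ appears, interpreted via the convention in Proposition \ref{integral rep nFn-1}(i) or directly as $\frac{1}{1-q}\sum_\nu(\a_1)_\nu/(\e)_\nu^\circ\,\nu(\lambda)$) is covered, and that the identity $g(\c\nu)g^\circ(\c)/\bigl(g(\c)g^\circ(\c\nu)\bigr)=q^{\d(\c)-\d(\c\nu)}$ is applied correctly using $g^\circ(\eta)=q^{\d(\eta)}g(\eta)$ with $g(\e)=1$. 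No deep input is needed beyond the elementary Gauss-sum relations (\ref{Gauss sum thm}) and the definitions; essentially the whole proof is the observation that $(\c)_\nu/(\c)_\nu^\circ=q^{\d(\c)-\d(\c\nu)}$.
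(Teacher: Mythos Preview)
Your proof is correct. The paper does not actually supply a proof of this proposition---it is quoted from \cite[Theorem 3.2]{Otsubo} without argument---but your direct computation from the definition, namely reducing to the identity $(\c)_\nu/(\c)_\nu^\circ=q^{\d(\c)-\d(\c\nu)}$ and splitting off the single term $\nu=\ol{\c}$, is exactly the natural (and presumably intended) argument.
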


\begin{lem}\label{2F1=1F0}
For $\lambda\in\k^\times$,
\begin{equation*}
\hF{2}{1}{\a,\e}{\c}{\lambda}=\begin{cases}
\dfrac{g(\a\ol{\c})g^\circ(\c)}{g(\a)}\ol{\c}(\lambda)\ol{\a}\c(1-\lambda)+1&(\lambda\neq1\mbox{\rm\ or }\a\neq\c),\vspace{5pt}\\
1+q^{\d(\a)}(1-q)&(\lambda=1\mbox{\rm\ and }\a=\c).
\end{cases}
\end{equation*}
\end{lem}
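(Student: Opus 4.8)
The plan is to reduce the identity, as far as possible, to Propositions \ref{red. for.} and \ref{integral rep nFn-1}, dealing separately with the parameter values at which Gauss sums degenerate. Throughout one uses the elementary facts $g(\e)=1$, $g^\circ(\e)=q$, $(\e)_\nu=g(\nu)$, $(\e)_\nu/(\e)_\nu^\circ=q^{1-\d(\nu)}$ and $j(\e,\c)=g(\c)/g^\circ(\c)=1$ for $\c\neq\e$, all immediate from the definitions and (\ref{J=G}). The parameter space splits into three disjoint cases: $\c=\e$; $\a\neq\e$ and $\c\neq\e$; $\a=\e$ and $\c\neq\e$.

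\textbf{Case $\c=\e$.} Then $\hFF{\a,\e}{\e}{\lambda}$ has a coincident upper and lower parameter, so Proposition \ref{red. for.} with $n=1$ gives $\hFF{\a,\e}{\e}{\lambda}=q\,\hF{1}{0}{\a}{}{\lambda}+1$. By Proposition \ref{integral rep nFn-1}(i) this equals $q\,\ol{\a}(1-\lambda)+1$ unless $\a=\e$ and $\lambda=1$, in which case it equals $q(1-q)+1$; since $\frac{g(\a\ol{\e})g^\circ(\e)}{g(\a)}=g^\circ(\e)=q$, this matches the first (resp.\ second) branch of the asserted formula.

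\textbf{Case $\a\neq\e$, $\c\neq\e$.} Here I would apply Proposition \ref{integral rep nFn-1}(ii) with $\b=\e$, which is legitimate as $\c\neq\e$: the term $\d(\a)(1-q)(\cdots)$ vanishes because $\a\neq\e$, and $j(\e,\c)=1$, so $\hFF{\a,\e}{\c}{\lambda}=-\sum_{u\in\k^\times}\c(1-u)\,\ol{\a}(1-\lambda u)$. If $\lambda=1$ this sum is $\sum_{v\in\k,\,v\neq1}\ol{\a}\c(v)=(q-1)\d(\a\ol{\c})-1$, yielding the values $1$ and $2-q$ of the two branches. If $\lambda\neq1$, the substitution $u\mapsto1-u$ followed by a scaling by $-(1-\lambda)/\lambda$ turns the sum (taken over all of $\k$) into $-\ol{\a}(1-\lambda)\,\ol{\c}(\lambda)\,\c(\lambda-1)\bigl(-j(\c,\ol{\a})\bigr)$; subtracting the $u=0$ contribution and invoking (\ref{J=G}) gives $\hFF{\a,\e}{\c}{\lambda}=1+\ol{\a}(1-\lambda)\,\ol{\c}(\lambda)\,\c(\lambda-1)\,g(\c)g(\ol{\a})/g^\circ(\c\ol{\a})$. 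Identifying this with $\frac{g(\a\ol{\c})g^\circ(\c)}{g(\a)}\ol{\c}(\lambda)\,\ol{\a}\c(1-\lambda)+1$ reduces, after cancelling $\ol{\a}(1-\lambda)\c(1-\lambda)$ and writing $\c(\lambda-1)=\c(-1)\c(1-\lambda)$, to the Gauss-sum identity $\c(-1)\,g(\a)g(\ol{\a})g(\c)=g(\a\ol{\c})\,g^\circ(\c)\,g^\circ(\c\ol{\a})$, which follows from (\ref{Gauss sum thm}) applied to $\a$ and to $\a\ol{\c}$ together with $g^\circ(\c)=g(\c)$; this argument works uniformly, including when $\a=\c$.

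\textbf{Case $\a=\e$, $\c\neq\e$.} This is the one case inaccessible from the two propositions: there is no coincident parameter for Proposition \ref{red. for.}, and Proposition \ref{integral rep nFn-1}(ii) must not be used here, since its $\a=\e$ instance is itself deduced from the present lemma. Instead I would compute directly from the definition: using $(\e)_\nu=g(\nu)$ and $(\e)_\nu/(\e)_\nu^\circ=q^{1-\d(\nu)}$ one gets $\hFF{\e,\e}{\c}{\lambda}=\frac{1}{1-q}\bigl(q\sum_{\nu\neq\e}j(\nu,\c)\,\nu(\lambda)+1\bigr)$, and expanding $j(\nu,\c)=-\sum_{x\in\k}\nu(x)\c(1-x)$ and using the orthogonality relation $\sum_{\nu\in\widehat{\k^\times}}\nu(y)=q-1$ or $0$ according as $y=1$ or $y\in\k^\times\setminus\{1\}$, the inner sum becomes $-(q-1)\,\ol{\c}(\lambda)\,\c(\lambda-1)-1$, whence $\hFF{\e,\e}{\c}{\lambda}=q\,\ol{\c}(\lambda)\,\c(\lambda-1)+1$. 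Finally $\frac{g(\e\ol{\c})g^\circ(\c)}{g(\e)}=g(\ol{\c})g^\circ(\c)=\c(-1)q$ by (\ref{Gauss sum thm}) and $\ol{\e}\c(1-\lambda)=\c(1-\lambda)$, so the first branch of the asserted formula equals $q\,\c(-1)\,\ol{\c}(\lambda)\,\c(1-\lambda)+1=q\,\ol{\c}(\lambda)\,\c(\lambda-1)+1$, as required (the second branch does not occur here since $\a=\e\neq\c$).

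I expect the main difficulty to be organisational rather than analytic: one must partition the parameters carefully so as to avoid the circular use of Proposition \ref{integral rep nFn-1}(ii) when $\a=\e$, and keep track of the several degenerations ($\a=\e$, $\c=\e$, $\lambda=1$, $\a=\c$) at which Gauss sums collapse and the formula changes branch; the individual Gauss-sum reconciliations are then routine applications of (\ref{Gauss sum thm}) and (\ref{J=G}).
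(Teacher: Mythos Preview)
Your argument is correct, but the paper's proof is considerably shorter and avoids any case analysis. Instead of partitioning according to whether $\a$ or $\c$ is trivial, the paper works uniformly from the definition: using $(\e)_\nu/(\e)_\nu^\circ=q^{1-\d(\nu)}$, one separates off the $\nu=\e$ term (producing the ``$+1$''), and then the index shift $\mu=\c\nu$ together with \eqref{Poch formula} collapses the remaining sum to
\[
\hF{2}{1}{\a,\e}{\c}{\lambda}=\dfrac{g(\a\ol{\c})g^\circ(\c)}{g(\a)}\,\ol{\c}(\lambda)\,\hF{1}{0}{\a\ol{\c}}{}{\lambda}+1,
\]
valid for all $\a,\c$ at once; Proposition~\ref{integral rep nFn-1}(i) then yields both branches simultaneously. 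Your worry about circularity with Proposition~\ref{integral rep nFn-1}(ii) at $\a=\e$ is legitimate and you handle it correctly, but the issue simply does not arise in the paper's approach, which invokes only part~(i). What your route buys is an illustration that the lemma is also reachable from the integral-representation side and from Proposition~\ref{red. for.}; the cost is the three-way split and the separate Gauss-sum reconciliation in your Case~2.
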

\begin{proof}
By letting $\mu=\c\nu$ and using (\ref{Poch formula}), we have
\begin{align*}
\hF{2}{1}{\a,\e}{\c}{\lambda}&=\dfrac{1}{1-q}\sum_\nu q^{1-\d(\nu)}\dfrac{(\a)_\nu}{(\c)_\nu^\circ}\nu(\lambda)\\
&=\dfrac{q}{1-q}\cdot\dfrac{(\a)_{\ol{\c}}}{(\c)_{\ol{\c}}^\circ}\ol{\c}(\lambda)\sum_\mu \dfrac{(\a\ol{\c})_\mu}{(\e)_\mu^\circ}\mu(\lambda)+1\\
&=\dfrac{g(\a\ol{\c})g^\circ(\c)}{g(\a)}\ol{\c}(\lambda)\hF{1}{0}{\a\ol{\c}}{}{\lambda}+1.
\end{align*}
Thus, we obtain the lemma by Proposition \ref{integral rep nFn-1} (i).
\end{proof}

The following propositions are slight generalizations of Otsubo's results \cite{Otsubo}. 
A finite analogue of the Pfaff formula is the following.
\begin{prop}[{cf. \cite[Theorem 3.13]{Otsubo}}]\label{Kummer 24}
Suppose that $\b\neq\e$, $\a\neq\c$. Then, for $\lambda\neq1$, 
\begin{align*}
&\a(1-\lambda)\hF{2}{1}{\a,\b}{\c}{\lambda}\\
&=\hF{2}{1}{\a,\ol{\b}\c}{\c}{\dfrac{\lambda}{\lambda-1}}+\d(\ol{\b}\c)(1-q)\dfrac{g^\circ(\c)}{g(\a)g(\ol{\a}\c)}\ol{\c}(\lambda)\a(\lambda-1).
\end{align*}
\end{prop}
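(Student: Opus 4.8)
The plan is to imitate the proof of the classical Pfaff transformation at the level of finite-field hypergeometric sums, following Otsubo's method for his Theorem 3.13 but keeping track of the degenerate terms. First I would start from the integral-type formula in Proposition \ref{integral rep nFn-1} (ii), which expresses $-j(\b,\ol{\b}\c)\,\hFF{\a,\b}{\c}{\lambda}$ as a character sum $\sum_{u\in\k^\times}\b(u)\ol{\b}\c(1-u)\ol{\a}(1-\lambda u)$ plus a $\d(\a)$-correction term. Applying this with $\lambda$ replaced by $\lambda/(\lambda-1)$ gives a similar expression for $-j(\b,\ol{\b}\c)\,\hFF{\a,\b}{\c}{\lambda/(\lambda-1)}$, but I actually want the transformation in the form stated, with second upper parameter $\ol{\b}\c$; so I would instead write down the sum for $\hFF{\a,\ol{\b}\c}{\c}{\lambda/(\lambda-1)}$, namely (up to the Jacobi-sum factor $-j(\ol{\b}\c,\b)=-j(\b,\ol{\b}\c)$, which is symmetric) the sum $\sum_{u}\ol{\b}\c(u)\b(1-u)\ol{\a}\bigl(1-\tfrac{\lambda}{\lambda-1}u\bigr)$.

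The key computational step is a change of variables in this last sum. Writing $1-\tfrac{\lambda}{\lambda-1}u = \tfrac{(\lambda-1)-\lambda u}{\lambda-1} = \tfrac{\lambda(1-u)-(1-u)+ \cdots}{\lambda-1}$ — more precisely $1-\tfrac{\lambda}{\lambda-1}u=\tfrac{(1-u)-\lambda u + \lambda u - \lambda u}{\cdot}$; the clean substitution is $v=\tfrac{u-?}{?}$ engineered so that the three linear forms $u$, $1-u$, $1-\tfrac{\lambda}{\lambda-1}u$ transform into (scalar multiples of) $1-v$, $v$, $1-\lambda v$ respectively. Carrying this out, the sum becomes $\a(1-\lambda)$ times $\sum_{v}\b(v)\ol{\b}\c(1-v)\ol{\a}(1-\lambda v)$, which is exactly $\a(1-\lambda)$ times the sum attached to $\hFF{\a,\b}{\c}{\lambda}$. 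Matching the two sides and dividing by $-j(\b,\ol{\b}\c)$ (legitimate since $\b\neq\e$ forces $\b\neq\c$ or allows $j\neq 0$ via \eqref{J=G}; one checks $j(\b,\ol{\b}\c)\neq0$ under the hypotheses, or absorbs the vanishing case separately) yields the stated identity up to the boundary corrections.

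The main obstacle, and the reason the statement carries the $\d(\ol{\b}\c)$-term and a hidden use of $\a\neq\c$, is bookkeeping of the degenerate contributions: (a) the $\d(\a)(1-q)$ terms coming from Proposition \ref{integral rep nFn-1} (ii) on both sides must be shown to cancel or to contribute nothing new, using $\a(1-\lambda)\cdot\ol{\c}(\lambda/(\lambda-1))\ol{\b}\c(\lambda/(\lambda-1)-1)=\ol{\c}(\lambda)\ol{\b}\c(\lambda-1)\cdot(\text{unit})$ and the relation $\ol{\c}(-1)=\c(-1)$; and (b) the value $u=1$ and $v=$ (its image) where one of the characters is evaluated at $0$ must be tracked through the substitution, producing the explicit $\d(\ol{\b}\c)(1-q)g^\circ(\c)/\bigl(g(\a)g(\ol{\a}\c)\bigr)\ol{\c}(\lambda)\a(\lambda-1)$ term; here the hypothesis $\a\neq\c$ guarantees $g(\ol{\a}\c)\neq0$ so this term is well-defined, and $\b\neq\e$ is what makes $\ol{\b}\c$ possibly trivial (hence the $\d(\ol{\b}\c)$) rather than $\b$ itself. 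I would handle (b) by isolating, before the change of variables, the term $u=1$ in the sum for $\hFF{\a,\ol{\b}\c}{\c}{\lambda/(\lambda-1)}$: it equals $\ol{\b}\c(1)\b(0)\cdots$ which vanishes unless $\b=\e$, so in fact the asymmetry is cleaner from the other direction — I would instead isolate $v=1$ in the $\lambda$-sum, where $\b(1)\ol{\b}\c(0)\neq0$ exactly when $\ol{\b}\c=\e$, matching the $\d(\ol{\b}\c)$ in the statement. Once these two degenerate pieces are pinned down and the Jacobi-to-Gauss conversion \eqref{J=G} is used to rewrite $-1/j(\b,\ol{\b}\c)$ as $-g^\circ(\c)/\bigl(g(\b)g(\ol{\b}\c)\bigr)$ and recombine with the $v=1$ residue via the Pochhammer identities \eqref{Poch formula}, \eqref{Poch formula 2}, the stated formula falls out. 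The routine part is the explicit affine change of variables and the arithmetic with Gauss sums; the delicate part is getting the single boundary term and the sign $\c(-1)$ exactly right.
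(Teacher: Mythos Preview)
Your overall strategy --- apply the integral representation of Proposition~\ref{integral rep nFn-1}~(ii) to both sides and match them via a M\"obius change of variable --- is exactly what the paper does, and the $\d(\a)$--corrections do cancel as you say in (a). But your execution has a genuine gap in (b), and the paper's proof shows how to avoid it.

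The paper applies Proposition~\ref{integral rep nFn-1}~(ii) with the roles of $\a$ and $\b$ swapped (using the symmetry of ${}_2F_1$ in its upper parameters): it writes
\[
-j(\a,\ol{\a}\c)\hFF{\a,\b}{\c}{\lambda}=\sum_{u}\a(u)\ol{\a}\c(1-u)\ol{\b}(1-\lambda u),
\]
which needs precisely the hypothesis $\a\neq\c$ and has correction term $\d(\b)=0$. After the substitution $v=u(1-\lambda)/(1-\lambda u)$, the resulting sum is the one for $\hFF{\ol{\b}\c,\a}{\c}{\lambda/(\lambda-1)}$ with $\a$ again in the integration slot; Proposition~\ref{integral rep nFn-1}~(ii) there has correction term $\d(\ol{\b}\c)$, which is exactly the extra term in the statement.

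Your parametrization, with $\b$ (resp.\ $\ol{\b}\c$) in the integration slot, instead requires $\b\neq\c$ to invoke Proposition~\ref{integral rep nFn-1}~(ii) on the $\lambda$-side, and that is \emph{not} assumed; the case $\b=\c$ is precisely $\ol{\b}\c=\e$. More seriously, your proposed source for the $\d(\ol{\b}\c)$-term --- a boundary contribution at $v=1$ where ``$\b(1)\ol{\b}\c(0)\neq0$ exactly when $\ol{\b}\c=\e$'' --- is incorrect: by the paper's convention $\eta(0)=0$ for \emph{every} $\eta\in\khat$, including $\e$, so that term is always zero and your substitution $v=1-u$ produces no boundary contribution at all. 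The $\d(\ol{\b}\c)$-term is not a residue of the change of variables; it comes from the built-in correction in Proposition~\ref{integral rep nFn-1}~(ii) once you put $\a$ in the integration slot. Simply swapping which upper parameter you integrate against fixes both issues and reduces your argument to the paper's two-line proof.
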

\begin{proof}
By Proposition \ref{integral rep nFn-1} (ii) and letting $v=u(1-\lambda)/(1-\lambda u)$, we have
\begin{align*}
-j(\a,\ol{\a}\c)\hF{2}{1}{\a,\b}{\c}{\lambda}&=\sum_u \a(u)\ol{\a}\c(1-u)\ol{\b}(1-\lambda u)\\
&=\ol{\a}(1-\lambda)\sum_v \a(v)\ol{\a}\c(1-v)\b\ol{\c}\Big(1-\dfrac{\lambda v}{\lambda-1}\Big).
\end{align*}
Thus the proposition follows from Proposition \ref{integral rep nFn-1} (ii).
\end{proof}

The following is a finite analogue of the Vandermonde theorem (cf. \cite[(1.7.7)]{Slater}).
\begin{prop}[{cf. \cite[Theorem 4.3 and Remark 4.4]{Otsubo}}]\label{Vandermonde}$ $
\begin{enumerate}
\item If $\{\a,\ol{\mu}\}\neq\{\e,\c\}$, then 
\begin{equation*}
\hF{2}{1}{\a,\ol{\mu}}{\c}{1}=q^{-\d(\ol{\a}\c)}\dfrac{(\ol{\a}\c)_\mu}{(\c)_\mu^\circ}.
\end{equation*}
\item If $\{\a,\ol{\mu}\}=\{\e,\c\}$ then, 
\begin{equation*}
\hF{2}{1}{\a,\ol{\mu}}{\c}{1}=q^{-\d(\ol{\a}\c)}\dfrac{(\ol{\a}\c)_\mu}{(\c)_\mu^\circ}-\dfrac{(1-q)^2(1+q)^{\d(\c)}}{q}.
\end{equation*}
\end{enumerate}
\end{prop}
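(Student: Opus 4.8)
The plan is to derive Proposition \ref{Vandermonde} from the Pfaff-type transformation (Proposition \ref{Kummer 24}) specialized at an appropriate point, together with the evaluation of ${}_1F_0$ (Proposition \ref{integral rep nFn-1} (i)) and the Vandermonde/Gauss-sum identities for the Pochhammer symbols. Concretely, I would take $\lambda\to$ a value forcing the argument $\lambda/(\lambda-1)$ to be something we can evaluate, or—more directly—use the classical route: the finite analogue of Gauss's summation theorem $\hFF{\a}{\b}{\c}{1}$ should be obtained by summing the defining series and collapsing it via (\ref{J=G}) and (\ref{Poch formula}). Writing $\b=\ol\mu$, one has
\begin{equation*}
\hF{2}{1}{\a,\ol\mu}{\c}{1}=\frac{1}{1-q}\sum_\nu \frac{(\a)_\nu(\ol\mu)_\nu}{(\e)_\nu^\circ(\c)_\nu^\circ}\nu(1),
\end{equation*}
and since $\nu(1)=1$ for all $\nu$, the sum is over all of $\khat$; the idea is to recognize this as a Jacobi-sum-type complete sum and evaluate it in closed form.

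The cleanest approach is probably to reduce to Proposition \ref{integral rep nFn-1} (ii) by taking a limit $\lambda\to1$ there, but since that corollary is stated for $\lambda\neq0$ and involves $\ol\a(1-\lambda u)$, setting $\lambda=1$ gives $\sum_u \b(u)\ol\b\c(1-u)\ol\a(1-u)=\sum_u \b(u)\ol{\a\b}\c(1-u)$, which is (up to sign) a Jacobi sum $j(\b,\ol{\a\b}\c)$. So first I would revisit Proposition \ref{integral rep nFn-1} (ii), put $\lambda=1$, handle the degenerate $\d$-term carefully (it only contributes when $\a=\e$, producing the extra $(1-q)$ correction), and obtain
\begin{equation*}
-j(\b,\ol\b\c)\hF{2}{1}{\a,\b}{\c}{1}=\pm j(\b,\ol{\a\b}\c)+(\text{correction when }\a=\e).
\end{equation*}
Then with $\b=\ol\mu$, I would convert both Jacobi sums to Gauss sums via (\ref{J=G}), cancel the common factors, and rewrite the ratio $g(\text{stuff})/g(\text{stuff})$ as $(\ol\a\c)_\mu/(\c)_\mu^\circ$ using the definitions $(\a)_\nu=g(\a\nu)/g(\a)$ and $(\a)_\nu^\circ=g^\circ(\a\nu)/g^\circ(\a)$, picking up the $q^{-\d(\ol\a\c)}$ from the $g^\circ$ versus $g$ discrepancy via $g^\circ(\eta)=q^{\d(\eta)}g(\eta)$.

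The main obstacle is bookkeeping of the degenerate cases, i.e. exactly when the various characters appearing as arguments of Gauss/Jacobi sums become trivial, since formula (\ref{J=G}) has a genuinely different value $(1-(1-q)^n)/q$ in the all-trivial case and (\ref{Gauss sum thm}) degenerates when $\eta=\e$. The hypothesis $\{\a,\ol\mu\}\neq\{\e,\c\}$ in part (i) is precisely the condition ruling out the bad configuration where $\ol\a\c$ and simultaneously $\ol\mu\ol{\a}\c$ (or the relevant combination) are trivial; in part (ii) that configuration does occur and produces the explicit error term $(1-q)^2(1+q)^{\d(\c)}/q$. So the structure of the proof is: (1) establish the $\lambda=1$ evaluation of Proposition \ref{integral rep nFn-1} (ii) as a Jacobi sum identity, being explicit about the $\a=\e$ correction; (2) substitute $\b=\ol\mu$ and translate Jacobi sums to Gauss sums and then to Pochhammer ratios, which is routine; (3) isolate the exceptional character configuration, verify it corresponds exactly to $\{\a,\ol\mu\}=\{\e,\c\}$, and compute the discrepancy term directly from (\ref{J=G}) in the all-trivial case. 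Alternatively, if the $\lambda=1$ degeneration of Proposition \ref{integral rep nFn-1} (ii) is awkward to control, I would instead apply Proposition \ref{Kummer 24} with a suitable choice making $\lambda/(\lambda-1)\to$ a point where ${}_2F_1$ is already known, but I expect the direct Jacobi-sum computation to be shorter.
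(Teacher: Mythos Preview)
Your route is genuinely different from the paper's. The paper does not reprove the identity at all: part (i) is a direct citation of \cite[Theorem 4.3]{Otsubo}, and for part (ii) it simply quotes from the same reference that ${}_2F_1(\a,\ol\mu;\c;1)=1+q^{\d(\c)}(1-q)$ in the exceptional case, then computes by hand that $q^{-\d(\ol\a\c)}(\ol\a\c)_\mu/(\c)_\mu^\circ=1/q$ in each of the two subcases $(\a,\ol\mu)=(\e,\c)$ and $(\a,\ol\mu)=(\c,\e)$, and checks that the difference equals the stated correction term. So the paper's ``proof'' is really a five-line verification that the formula in (ii) is just a rewriting of Otsubo's value.

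Your approach, by contrast, is an honest derivation via the $\lambda=1$ specialization of Proposition \ref{integral rep nFn-1} (ii), which would essentially reprove Otsubo's theorem. The strategy is sound and yields the right answer in the generic case: the sum becomes $-j(\b,\ol{\a\b}\c)$, and the ratio $j(\b,\ol{\a\b}\c)/j(\b,\ol\b\c)$ does reduce to $q^{-\d(\ol\a\c)}(\ol\a\c)_\mu/(\c)_\mu^\circ$ via \eqref{J=G}. Two points to watch, though. First, the $\d(\a)$ correction in Proposition \ref{integral rep nFn-1} (ii) actually \emph{vanishes} at $\lambda=1$, since $\ol\b\c(\lambda-1)=\ol\b\c(0)=0$ (remember $\b\neq\c$); so the exceptional term in part (ii) does not arise from there but from the degenerate Jacobi sum $j(\e,\e)=2-q$ that appears when $\{\b,\ol{\a\b}\c\}=\{\e,\e\}$. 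Second, Proposition \ref{integral rep nFn-1} (ii) requires $\b\neq\c$; by symmetry in the upper parameters you can also get $\a\neq\c$, but the single case $\a=\ol\mu=\c$ (which lies in part (i) when $\c\neq\e$ and in part (ii) when $\c=\e$) falls outside both and needs a separate one-line treatment, e.g.\ via Proposition \ref{red. for.} or Lemma \ref{2F1=1F0}.
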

\begin{proof}
(i) follows by \cite[Theorem 4.3]{Otsubo}, hence we only have to prove (ii). 
Suppose that $\{\a,\ol{\mu}\}=\{\e,\c\}$. By \cite[Theorem 4.3]{Otsubo} again, it follows that
$$\hF{2}{1}{\a,\ol{\mu}}{\c}{1}=1+q^{\d(\c)}(1-q).$$
On the other hand, if $\a=\e$ and $\ol{\mu}=\c$ then, 
\begin{align*}
q^{-\d(\ol{\a}\c)}\dfrac{(\ol{\a}\c)_\mu}{(\c)_\mu^\circ}=q^{-\d(\c)}\dfrac{(\c)_{\ol{\c}}}{(\c)_{\ol{\c}}^\circ}=\dfrac{1}{q},
\end{align*}
and if $\a=\c$ and $\ol{\mu}=\e$ then, 
\begin{align*}
q^{-\d(\ol{\a}\c)}\dfrac{(\ol{\a}\c)_\mu}{(\c)_\mu^\circ}=q^{-1}\dfrac{(\e)_\e}{(\c)_\e^\circ}=\dfrac{1}{q}.
\end{align*}
Thus, we have 
\begin{equation*}
q^{-\d(\ol{\a}\c)}\dfrac{(\ol{\a}\c)_\mu}{(\c)_\mu^\circ}-\dfrac{(1-q)^2(1+q)^{\d(\c)}}{q}=1+q^{\d(\c)}(1-q).
\end{equation*}
Therefore, we obtain the proposition.
\end{proof}

A finite analogue of the Saalsch\"utz theorem (cf. \cite[(2.3.1.3)]{Slater}) is the following.
\begin{prop}[cf. {\cite[Theorem 4.11]{Otsubo}}]\label{Saalschutz theorem}
 Suppose that $\a\neq\e,\ \b\neq\c$ and $\a\b\ol{\c}\neq\e$. Then,
\begin{align*}
\hF{3}{2}{\a,\b,\ol{\nu}}{\c,\a\b\ol{\c\nu}}{1}=&q^{-\d(\ol{\a}\c)}\dfrac{(\ol{\a}\c)_\nu(\ol{\b}\c)_\nu}{(\c)_\nu^\circ(\ol{\a\b}\c)_\nu}+\dfrac{g^\circ(\c)g^\circ(\a\b\ol{\c\nu})}{g(\a)g(\b)g(\ol{\nu})}\\
&-(\d(\ol{\a}\c)\d(\nu)+\d(\b)\d(\c\nu))\dfrac{(1-q)^2}{q}.
\end{align*}
\end{prop}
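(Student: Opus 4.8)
This is the finite-field counterpart of the classical Saalsch\"utz summation for balanced ${}_3F_2(1)$, and its right-hand side has the familiar ``Vandermonde main term $+$ Gauss-sum term $+$ $\d$-correction'' shape. The plan is to reduce to the non-degenerate range, where the identity is essentially \cite[Theorem 4.11]{Otsubo}, and then to treat the remaining parameter configurations directly. When the characters $\b$, $\nu$, $\c\nu$ and $\a\b\ol{\c\nu}$ are all non-trivial, all three $\d$-factors on the right vanish and no Pochhammer symbol in the formula degenerates; in that range the identity follows from \cite[Theorem 4.11]{Otsubo}. So it remains to cover the configurations in which one of $\b$, $\nu$, $\c\nu$, $\a\b\ol{\c\nu}$ is trivial, i.e.\ $\b=\e$, $\nu=\e$, $\nu=\ol{\c}$ or $\c\nu=\a\b$, together with their overlaps. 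Of all these, the correction term $-(\d(\ol{\a}\c)\d(\nu)+\d(\b)\d(\c\nu))(1-q)^2/q$ is nonzero only in the two configurations $\{\a=\c,\ \nu=\e\}$ and $\{\b=\e,\ \nu=\ol{\c}\}$, and that is where the real work lies.

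For a degenerate configuration I would collapse the ${}_3F_2$ to a lower function and evaluate. Whenever an upper character coincides with a lower one, the reduction formula (Proposition \ref{red. for.}) removes that pair, at the cost of a factor $q^{\d(\cdot)}$ and an explicit Pochhammer correction; a \emph{trivial} upper character is instead absorbed against the implicit $\e$ in the denominator by means of $(\a)_\rho/(\a)_\rho^\circ=q^{\d(\a)-\d(\a\rho)}$, which follows at once from $g^\circ(\eta)=q^{\d(\eta)}g(\eta)$. After the available reductions one is left with a ${}_2F_1$ at $1$ — to be evaluated by the Vandermonde theorem (Proposition \ref{Vandermonde}), or by Lemma \ref{2F1=1F0} when its upper-left parameter is $\e$ — or with a ${}_1F_0$ at $1$ (Proposition \ref{integral rep nFn-1}(i)). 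One then rewrites both sides by means of \eqref{J=G}, \eqref{Poch formula}, \eqref{Poch formula 2} and \eqref{Gauss sum thm}, keeping strict account of the powers of $q$ produced by the $\d$-symbols and of the identity $|\khat|=q-1$, and compares.

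The main obstacle is matching the correction term in the two distinguished configurations. There the left-hand side degenerates to $\hF{3}{2}{\c,\b,\e}{\c,\b}{1}$ when $\{\a=\c,\ \nu=\e\}$, and to $\hF{3}{2}{\a,\e,\c}{\c,\a}{1}$ when $\{\b=\e,\ \nu=\ol{\c}\}$. In either case the parameter one cancels is non-trivial, so Proposition \ref{red. for.} produces $\hF{2}{1}{\zeta,\e}{\zeta}{1}+\tfrac{1}{q}T$ for a non-trivial character $\zeta$ and a Pochhammer product $T$ which, by the identity of the previous paragraph and the standing hypotheses, equals $q$; moreover $\hF{2}{1}{\zeta,\e}{\zeta}{1}=\tfrac{1}{q}-\tfrac{(1-q)^2}{q}$ by the exceptional clause of Proposition \ref{Vandermonde}(ii). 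Hence the left-hand side equals $\tfrac{1}{q}+1-\tfrac{(1-q)^2}{q}$, which matches the right-hand side summand by summand: $\tfrac{1}{q}$ is its first summand, $1$ its second, and the term $-\tfrac{(1-q)^2}{q}$ occurring on each side is literally the same quantity, namely the exceptional term of Proposition \ref{Vandermonde}(ii). In every other degenerate configuration the reductions involve only the non-exceptional cases of Propositions \ref{Vandermonde} and \ref{integral rep nFn-1}, so no correction is produced; the resulting case analysis is long, but each verification is a routine Gauss-sum manipulation.
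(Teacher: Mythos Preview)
Your approach is correct and essentially the same as the paper's: invoke \cite[Theorem 4.11]{Otsubo} for the generic range, and in the two coincidence configurations $\{\a=\c,\ \nu=\e\}$ and $\{\b=\e,\ \nu=\ol{\c}\}$ collapse the ${}_3F_2$ via Proposition \ref{red. for.} and evaluate the resulting ${}_2F_1$ at $1$ to obtain $3-q$ on both sides. The only difference is economy: the paper observes that Otsubo's theorem already covers \emph{every} case except those two (the exceptional set is precisely $\{\a,\b,\ol{\nu}\}=\{\e,\c,\a\b\ol{\c\nu}\}$), so the additional ``degenerate'' configurations you list ($\b=\e$, $\nu=\e$, $\c\nu=\e$, $\a\b=\c\nu$ individually) need no separate treatment, and the paper evaluates $\hF{2}{1}{\zeta,\e}{\zeta}{1}=2-q$ via Lemma \ref{2F1=1F0} rather than Proposition \ref{Vandermonde}(ii), which of course agrees with your $\tfrac{1}{q}-\tfrac{(1-q)^2}{q}$.
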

\begin{proof}
By \cite[Theorem 4.11]{Otsubo}, we only have to prove for the case when $\{\a,\b,\ol{\nu}\}=\{\e,\c,\a\b\ol{\c\nu}\}$ (i.e. $\ol{\a}\c=\nu=\e$ or $\b=\c\nu=\e$). If $\ol{\a}\c=\nu=\e$, then the right-hand side of the proposition is equal to $3-q$. 
On the other hand, by Proposition \ref{red. for.} and Lemma \ref{2F1=1F0}, we have
\begin{align*}
\hF{3}{2}{\a,\b,\ol{\nu}}{\c,\a\b\ol{\c\nu}}{1}&=\hF{3}{2}{\a,\b,\e}{\a,\b}{1}=\dfrac{1}{q}\cdot\dfrac{(\b)_{\ol{\a}}(\e)_{\ol{\a}}}{(\e)^\circ_{\ol{\a}}(\b)_{\ol{\a}}^\circ}+2-q=3-q.
\end{align*}
Here, note that $\a\neq\b$ and $\b\neq\e$ by the assumptions. Similarly, we can prove for $\b=\c\nu=\e$. 
\end{proof}

\section{Finite analogues of integral representations}
\subsection{The case of $F_D$}
For a function $f:(\k^\times)^n\rightarrow \C$, its {\it Fourier transform} is a function on $(\widehat{\k^\times})^n$ defined by
\begin{equation*}
\widehat{f}(\nu_1,\dots,\nu_n)=\sum_{t_i\in\k^\times}f(t_1,\dots,t_n)\prod_{i=1}^n\ol{\nu_i}(t_i).
\end{equation*}
Then, 
\begin{equation}\label{Fourier}
f(\lambda_1,\dots,\lambda_n)=\dfrac{1}{(q-1)^n}\sum_{\nu_i\in\widehat{\k^\times}}\widehat{f}(\nu_1,\dots,\nu_n)\prod_{i=1}^n \nu_i(\lambda_i).
\end{equation}

Over $\C$, Lauricella's functions $F_D^{(n)}$ have the following integral representations (cf. \cite[Theorem 3.4.1]{Matsumoto}).
If $0<{\rm Re}(a)<{\rm Re}(c)$,
\begin{align}
&B(a,c-a)\FD{n}{a;b_1,\dots,b_n}{c}{z_1,\dots,z_n}\label{FD int}\\
&=\int_0^1 \Big(\prod_{i=1}^n(1-z_iu)^{-b_i}\Big)u^{a-1}(1-u)^{c-a-1}\,du.\nonumber
\end{align}
If $0<{\rm Re}(b_i)$ for all $i$ and ${\rm Re}(\sum_i b_i)<{\rm Re}(c)$, then
\begin{align}
&\dfrac{\Big(\prod_{i=1}^n \Gamma(b_i)\Big) \Gamma(c-\sum_{i=1}^n b_i) }{\Gamma(c)}\FD{n}{a;b_1,\dots,b_n}{c}{z_1,\dots,z_n}\label{F1 integral}\\
&=\int_\Delta \Big(1-\sum_{i=1}^n z_iu_i\Big)^{-a}\prod_{i=1}^n u_i^{b_i-1}\Big(1-\sum_{i=1}^n u_i\Big)^{c-\sum_{i=1}^n b_i-1}\,du_1\cdots du_n,\nonumber
\end{align}
where $\Delta:=\{(u_1,\dots,u_n)\in\R^n\mid u_i\geq0,\ \sum_i u_i\leq1\}$.
Their finite analogues are as follows.

\begin{thm}\label{FD int ana}$ $
\begin{enumerate}
\item Suppose that $\a\neq\c$ and $\b_i\neq\e$ for all $i$. Then, for $\lambda_1,\dots,\lambda_n\in\k^\times$,
\begin{align*}
&-j(\a,\ol{\a}\c)\FD{n}{\a;\b_1,\dots,\b_n}{\c}{\lambda_1,\dots,\lambda_n}\\
&\quad=\sum_{u\in\k^\times}\Big(\prod_{i=1}^n \ol{\b_i}(1-\lambda_iu)\Big)\a(u)\ol{\a}\c(1-u).
\end{align*}

\item Suppose that $\a\neq\e$ and $\b_1\cdots\b_n\neq\c$. Then, for $\lambda_1,\dots,\lambda_n\in\k^\times$,
\begin{align*}
&(-1)^n\dfrac{ \Big( \prod_{i=1}^n g(\b_i) \Big) g(\ol{\b_1\cdots\b_n}\c)}{g^\circ(\c)}\FD{n}{\a;\b_1,\dots,\b_n}{\c}{\lambda_1,\dots,\lambda_n}\\
&=\sum_{u_1,\dots,u_n\in\k^\times}\ol{\a}\Big(1-\sum_{i=1}^n \lambda_i u_i\Big)\Big(\prod_{i=1}^n\b_i(u_i)\Big)\ol{\b_1\cdots\b_n}\c\Big(1-\sum_{i=1}^n u_i\Big).
\end{align*}
\end{enumerate}
\end{thm}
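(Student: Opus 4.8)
The plan is to prove both parts by expanding the right-hand sides via Fourier inversion (\ref{Fourier}) and matching with the defining series of $F_D^{(n)}$. For part (i), consider the function $f(\lambda_1,\dots,\lambda_n)=\sum_{u\in\k^\times}\big(\prod_i\ol{\b_i}(1-\lambda_iu)\big)\a(u)\ol{\a}\c(1-u)$ on $(\k^\times)^n$. Its Fourier transform factors: $\widehat{f}(\nu_1,\dots,\nu_n)=\sum_{u}\a(u)\ol{\a}\c(1-u)\prod_i\big(\sum_{t_i}\ol{\b_i}(1-t_i)\ol{\nu_i}(t_i)\big)$. After the substitution $t_i\mapsto t_i/u$ the inner sums become, up to a power of $u$, sums of the form $\sum_{s}\ol{\b_i}(1-s)\ol{\nu_i}(s)$, which one recognizes (via Proposition \ref{integral rep nFn-1} (ii) with $\a=\e$, or directly as a ${}_2F_1$ value, i.e. essentially $\hFF{\e,\ol{\nu_i}}{\b_i}{\cdot}$ evaluated appropriately) as a Jacobi-sum multiple of $(\b_i)_{\nu_i}/(\e)_{\nu_i}^\circ$ together with the right monomial $\nu_i(u)$. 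Collecting the $u$-sum then produces $\sum_u\a(u)\ol{\a}\c(1-u)\,\nu_1\cdots\nu_n(u)$, which by Proposition \ref{integral rep nFn-1} (ii) (applied with the single-variable ${}_2F_1$) equals $-j(\a,\ol{\a}\c)$ times $(\a)_{\nu_1\cdots\nu_n}/(\c)_{\nu_1\cdots\nu_n}^\circ$ up to the $\nu_i(\lambda_i)$ factors that reassemble the series. Plugging $\widehat{f}$ back into (\ref{Fourier}) and comparing term-by-term with the definition of $\FD{n}{\a;\b_1,\dots,\b_n}{\c}{\lambda_1,\dots,\lambda_n}$ yields the identity, once one checks the constant $-j(\a,\ol{\a}\c)$ comes out correctly and that the hypotheses $\a\neq\c$, $\b_i\neq\e$ kill all the degenerate $\d$-correction terms in Proposition \ref{integral rep nFn-1} (ii).

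For part (ii), the strategy is the same but with the simplex-type integral: set $g(\lambda_1,\dots,\lambda_n)=\sum_{u_1,\dots,u_n\in\k^\times}\ol{\a}\big(1-\sum_i\lambda_iu_i\big)\big(\prod_i\b_i(u_i)\big)\ol{\b_1\cdots\b_n}\c\big(1-\sum_i u_i\big)$. Here the $\lambda_i$ enter only through the single factor $\ol{\a}(1-\sum\lambda_iu_i)$, so the Fourier transform in the $\lambda_i$ does not factor quite as cleanly; instead I would first expand $\ol{\a}(1-\sum\lambda_iu_i)$ using the one-variable Fourier/binomial expansion (i.e. $\ol{\a}(1-x)=\frac{1}{1-q}\sum_\nu \frac{(\a)_\nu}{(\e)_\nu^\circ}\nu(x)$, which is just $\hFF{\a,\e}{\e}{\cdot}$-type, or more precisely the ${}_1F_0$ expansion), writing $\ol{\a}(1-\sum\lambda_iu_i)$ as a sum over a single character $\nu$ of $\nu(\sum\lambda_iu_i)$ times Pochhammer factors — but since $\sum\lambda_iu_i$ is itself a sum, one iterates: the cleanest route is to change variables so that the $u_i$ live on the standard simplex, i.e. substitute $u_i = w\,v_i$ where $\sum v_i$ stays controlled, or directly apply the multinomial expansion of $\nu(\sum\lambda_iu_i)$ via repeated use of the binomial identity. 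After this the $u_i$-sums separate into $n$ independent Jacobi-type sums $\sum_{u_i}\b_i(u_i)(\text{character})(1-\cdots)$, each evaluating by (\ref{J=G}) to the Gauss-sum ratio appearing on the left, and the leftover character sum reassembles the $F_D^{(n)}$ series. The constant $(-1)^n\big(\prod g(\b_i)\big)g(\ol{\b_1\cdots\b_n}\c)/g^\circ(\c)$ is exactly the product of the $n$ Jacobi sums $j(\b_i,\dots)$ merged with the outer Jacobi sum, consistent with (\ref{J=G}); this is the finite-field shadow of the Beta-function constant in (\ref{F1 integral}).

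An alternative, and probably cleaner, approach for (ii): induct on $n$, or better, derive (ii) from (i) by an integral transformation mirroring the classical passage between the two complex integral representations (\ref{FD int}) and (\ref{F1 integral}). Concretely, in the complex world (\ref{F1 integral}) follows from (\ref{FD int}) by writing $u_i = u\,s_i$ with $\sum s_i \le 1$ or by the Euler-integral identity applied to each $(1-z_iu)^{-b_i}$; over $\k$ one would take the right-hand side of (i), expand each $\ol{\b_i}(1-\lambda_iu)$ by the ${}_1F_0$ formula into a $u_i$-integral, and collapse. Whichever route, the main obstacle is bookkeeping the degenerate terms: the one-variable evaluations in Proposition \ref{integral rep nFn-1} and formula (\ref{J=G}) have exceptional cases (the $\d(\eta)$ corrections, the $\a=\e$ versus $\a\neq\e$ dichotomy, $\b_j=\e$, $\nu_i=\e$), and one must verify that the stated hypotheses ($\a\neq\c$ and all $\b_i\neq\e$ for (i); $\a\neq\e$ and $\b_1\cdots\b_n\neq\c$ for (ii)) are exactly enough to make every correction term vanish or telescope away — in particular the sums over $\nu_i=\e$ must be shown not to spoil the clean Pochhammer expressions. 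I expect that checking these boundary contributions, rather than the main-term algebra, is where the real work lies; the main term is a mechanical consequence of Fourier inversion plus (\ref{J=G}) and Proposition \ref{integral rep nFn-1}.
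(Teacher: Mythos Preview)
Your approach to (i) is essentially the paper's: compute the Fourier transform of the right-hand side, change variables $s_i=t_iu$, recognize the resulting character sums as Jacobi sums, convert via \eqref{J=G} and \eqref{Poch formula 2}, and invert by \eqref{Fourier}. One simplification: the inner sums $\sum_s \ol{\b_i}(1-s)\ol{\nu_i}(s)=-j(\ol{\b_i},\ol{\nu_i})$ and $\sum_u \a\nu_1\cdots\nu_n(u)\,\ol{\a}\c(1-u)=-j(\a\nu_1\cdots\nu_n,\ol{\a}\c)$ are Jacobi sums \emph{by definition}; there is no need to route through Proposition~\ref{integral rep nFn-1}(ii). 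The hypotheses $\b_i\neq\e$ and $\a\neq\c$ then enter only to put these Jacobi sums into the Gauss-sum product form of \eqref{J=G}, so there are in fact no ``degenerate $\d$-correction terms'' to chase.

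For (ii) you have overlooked the same direct route. You say ``the Fourier transform in the $\lambda_i$ does not factor quite as cleanly,'' but it does: substitute $s_i=t_iu_i$ (one independent substitution per variable, rather than the single $u$ of part (i)), and the Fourier transform of $g$ separates as
\[
\widehat{g}(\nu_1,\dots,\nu_n)=j(\ol{\b_1\cdots\b_n}\c,\b_1\nu_1,\dots,\b_n\nu_n)\cdot j(\ol{\a},\ol{\nu_1},\dots,\ol{\nu_n}),
\]
a product of two $(n{+}1)$-variable Jacobi sums. Applying \eqref{J=G} to each (using $\b_1\cdots\b_n\neq\c$ for the first factor and $\a\neq\e$ for the second) gives exactly the Pochhammer ratios in the defining series of $F_D^{(n)}$, and Fourier inversion finishes. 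Your proposed detours---iterated ${}_1F_0$ expansions of $\ol{\a}(1-\sum\lambda_iu_i)$, induction on $n$, or reduction to (i)---would eventually succeed, but the paper's direct computation is a few lines and involves no boundary-case bookkeeping at all.
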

\begin{proof}
(i) Put
\begin{equation*}
f(\lambda_1,\dots,\lambda_n)=\sum_{u\in\k^\times}\Big(\prod_{i=1}^n \ol{\b_i}(1-\lambda_iu)\Big)\a(u)\ol{\a}\c(1-u).
\end{equation*}
Letting $s_i=t_iu$ for all $i$ and using \eqref{J=G} (note that $\a\neq\c$ and $\b_i\neq\e$) and \eqref{Poch formula 2}, we have
\begin{align*}
\widehat{f}(\nu_1,\dots,\nu_n)&=\sum_{t_1,\dots,t_n\in\k^\times}\sum_{u\in\k^\times}\a(u)\ol{\a}\c(1-u)\prod_i \ol{\b_i}(1-t_iu)\ol{\nu_i}(t_i)\\
&=\sum_u\a\nu_1\cdots\nu_n(u)\ol{\a}\c(1-u)\sum_{s_1,\dots,s_n\in\k^\times}\prod_i \ol{\b_i}(1-s_i)\ol{\nu_i}(s_i)\\
&=(-1)^{n+1}j(\ol{\a}\c,\a\nu_1\cdots\nu_n)\prod_i j(\ol{\b_i},\ol{\nu_i})\\
&=(-1)^{n+1}j(\a,\ol{\a}\c)\cdot\dfrac{(\a)_{\nu_1\cdots\nu_n}}{(\c)_{\nu_1\cdots\nu_n}^\circ}\cdot \dfrac{\prod_i (\b_i)_{\nu_i}}{\prod_i (\e)_{\nu_i}^\circ}.
\end{align*}
Thus, (i) follows by \eqref{Fourier}.

(ii) Put
\begin{equation*}
g(\lambda_1,\dots,\lambda_n)=\sum_{u_1,\dots,u_n\in\k^\times}\ol{\a}\Big(1-\sum_i \lambda_i u_i\Big)\Big(\prod_{i}^n\b_i(u_i)\Big) \ol{\b_1\cdots\b_n}\c\Big(1-\sum_i u_i\Big).
\end{equation*}
Letting $s_i=t_iu_i$ for all $i$ and using \eqref{J=G} (note that $\a\neq\e$ and $\b_1\cdots\b_n\neq\c$) and \eqref{Poch formula 2}, we obtain
\begin{align*}
&\widehat{g}(\nu_1,\dots,\nu_n)\\
&=\sum_{t_1,\dots,t_n\in\k^\times}\sum_{u_1,\dots,u_n\in\k^\times}\ol{\a}\Big(1-\sum_i t_iu_i\Big)\Big(\prod_i \b_i(u_i)\Big)\ol{\b_1\cdots\b_n}\c\Big(1-\sum_i u_i\Big)\prod_i \ol{\nu_i}(t_i)\\
&=\sum_{u_1,\dots,u_n}\Big(\prod_i \b_i\nu_i(u_i)\Big) \ol{\b_1\cdots\b_n}\c\Big(1-\sum_i u_i\Big)\sum_{s_1,\dots,s_n}\ol{\a}\Big(1-\sum_i s_i\Big)\prod_i \ol{\nu_i}(s_i)\\
&=j(\ol{\b_1\cdots\b_n}\c,\b_1\nu_1,\dots,\b_n\nu_n)\cdot j(\ol{\a},\ol{\nu_1},\dots,\ol{\nu_n})\\
&=\dfrac{\Big(\prod_ig(\b_i)\Big) g(\ol{\b_1\cdots\b_n}\c)}{g^\circ(\c)}\cdot \dfrac{\prod_i(\b_i)_{\nu_i}}{ (\c)_{\nu_1\cdots\nu_n}^\circ}\cdot\dfrac{(\a)_{\nu_1\cdots\nu_n}}{\prod_i (\e)_{\nu_i}^\circ}.
\end{align*}
Thus, (ii) follows by \eqref{Fourier}.
\end{proof}

Let $d\in\Z_{\geq1}$. Over $\C$, the Gauss hypergeometric functions have the integral representation (cf. \cite[(1.6.6)]{Slater})
\begin{equation}
B(a,c-a)\hF{2}{1}{a,b}{c}{z}=\int_0^1 t^{a-1}(1-t)^{c-a-1}(1-zt)^{-b}dt. \label{2F1 int}
\end{equation}
If we put $\zeta = \exp(2\pi\sqrt{-1}/d)$, by the change of variable $t=\tau^d$ in \eqref{2F1 int} and using \eqref{FD int}, we obtain
\begin{align*}
&F_D^{(2d-1)}\Bigg({{da;\overbrace{a-c,\dots,a-c}^{d-1\ times},\overbrace{b,\dots,b}^{d\ times}}\atop{(d-1)a+c}};\zeta,\dots,\zeta^{d-1},z,\zeta z,\dots,\zeta^{d-1}z\Bigg)\\
&=\dfrac{\Gamma(a)\Gamma((d-1)a+c)}{d\Gamma(da)\Gamma(c)}\hF{2}{1}{a,b}{c}{z^d}.
\end{align*}
This is a generalization of Karlsson's formula proved for $d=2,3$ \cite[(4.10) and (6.1)]{Karlsson}.
As an application of Theorem \ref{FD int ana}, we obtain a finite analogue of this formula.

\begin{thm}\label{Karlsson ana}
Suppose that $d\mid q-1$, $\a\neq\c$ and $\b\neq\e$. Let $\p_d\in \khat$ be a character of exact order $d$ and $\xi\in\k^\times$ be a primitive $d$th root of unity. 
Then, for any $\lambda \in \k^\times$,
\begin{align*}
&F_D^{(2d-1)}\Bigg({{\a^d;\overbrace{\a\ol{\c},\dots,\a\ol{\c}}^{d-1\ times},\overbrace{\b,\dots,\b}^{d\ times}}\atop{\a^{d-1}\c}};\xi,\dots,\xi^{d-1},\lambda,\xi \lambda,\dots,\xi^{d-1}\lambda\Bigg)\\
&=\sum_{i=0}^{d-1}\dfrac{g(\p_d^i\a)g^\circ(\a^{d-1}\c)}{g(\a^d)g^\circ(\p_d^i\c)}\hF{2}{1}{\p_d^i\a,\b}{\p_d^i\c}{\lambda^d}.
\end{align*}
\end{thm}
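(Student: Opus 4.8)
The plan is to transplant the complex derivation of this identity: there one substitutes $t=\tau^d$ in the Euler integral \eqref{2F1 int} for ${}_2F_1$ and recognizes the result via \eqref{FD int}. Over $\k$, Theorem \ref{FD int ana} (i) plays the role of \eqref{FD int}, Proposition \ref{integral rep nFn-1} (ii) that of \eqref{2F1 int}, and the substitution $t=\tau^d$ is replaced by summing over the fibres of $u\mapsto u^d$. Concretely, first I would apply Theorem \ref{FD int ana} (i) with $n=2d-1$, upper parameter $\a^d$, lower parameter $\a^{d-1}\c$, the first $d-1$ of the $\b_i$ equal to $\a\ol\c$ and the remaining $d$ equal to $\b$, and arguments $\xi,\dots,\xi^{d-1},\lambda,\xi\lambda,\dots,\xi^{d-1}\lambda$. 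Its hypotheses are met: $\a^d\neq\a^{d-1}\c$ and $\a\ol\c\neq\e$ are both equivalent to $\a\neq\c$, and $\b\neq\e$ is assumed. This writes $-j(\a^d,\ol\a\c)$ times the left-hand side as $\sum_{u\in\k^\times}\bigl(\prod_{j=1}^{d-1}\ol\a\c(1-\xi^j u)\bigr)\bigl(\prod_{j=0}^{d-1}\ol\b(1-\xi^j\lambda u)\bigr)\a^d(u)\ol\a\c(1-u)$, where I have used $\ol{\a^d}\cdot\a^{d-1}\c=\ol\a\c$.

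The second step is the $d$th-power collapse. Since $\xi$ is a primitive $d$th root of unity, $\prod_{j=0}^{d-1}(1-\xi^j x)=1-x^d$ identically, whence $\prod_{j=0}^{d-1}\eta(1-\xi^j x)=\eta(1-x^d)$ for every $\eta\in\khat$ (the degenerate case, some factor $1-\xi^j x=0$, being covered by $\eta(0)=0$). Grouping the stray factor $\ol\a\c(1-u)=\ol\a\c(1-\xi^0u)$ with $\prod_{j=1}^{d-1}\ol\a\c(1-\xi^j u)$, and writing $\a^d(u)=\a(u)^d=\a(u^d)$, the sum becomes $\sum_{u\in\k^\times}\ol\a\c(1-u^d)\,\ol\b(1-\lambda^d u^d)\,\a(u^d)$. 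Now I would pull back along $u\mapsto u^d$ using $\#\{u\in\k^\times:u^d=v\}=\sum_{i=0}^{d-1}\p_d^i(v)$ (this is where $d\mid q-1$ enters), obtaining $\sum_{i=0}^{d-1}\sum_{v\in\k^\times}(\p_d^i\a)(v)\,\ol\a\c(1-v)\,\ol\b(1-\lambda^d v)$.

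Finally, for each $i$ the inner sum is an instance of Proposition \ref{integral rep nFn-1} (ii) with the parameter substitutions $\b\mapsto\p_d^i\a$, $\c\mapsto\p_d^i\c$, $\a\mapsto\b$ and argument $\lambda^d$; the hypothesis $\p_d^i\a\neq\p_d^i\c$ is again $\a\neq\c$, and because $\b\neq\e$ the correction term $\d(\b)(1-q)(\cdots)$ vanishes, so the inner sum equals $-j(\p_d^i\a,\ol\a\c)\,\hFF{\p_d^i\a,\b}{\p_d^i\c}{\lambda^d}$, invoking the symmetry of ${}_2F_1$ in its numerator parameters. Dividing by $-j(\a^d,\ol\a\c)$ and simplifying each ratio $j(\p_d^i\a,\ol\a\c)/j(\a^d,\ol\a\c)$ via \eqref{J=G} — both Jacobi sums fall in the nontrivial case since $\ol\a\c\neq\e$, the common factor $g(\ol\a\c)$ cancels, and $\p_d^i\a\cdot\ol\a\c=\p_d^i\c$, $\a^d\cdot\ol\a\c=\a^{d-1}\c$ — leaves the coefficient $g(\p_d^i\a)g^\circ(\a^{d-1}\c)/\bigl(g(\a^d)g^\circ(\p_d^i\c)\bigr)$, which is precisely the asserted formula.

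I expect no deep obstacle: the argument is essentially parameter bookkeeping between the two integral representations, followed by one Gauss-sum simplification. The points that must be handled carefully are the verification of the hypotheses of Theorem \ref{FD int ana} (i) and Proposition \ref{integral rep nFn-1} (ii) (all of which reduce to $\a\neq\c$ and $\b\neq\e$), and the degenerate terms in $\prod_j\eta(1-\xi^j x)=\eta(1-x^d)$, which cause no trouble thanks to the convention $\eta(0)=0$.
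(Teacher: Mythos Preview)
Your proposal is correct and follows essentially the same route as the paper: apply Theorem \ref{FD int ana} (i), collapse the products via $\prod_{j=0}^{d-1}(1-\xi^j x)=1-x^d$, expand over the fibres of $u\mapsto u^d$ using $\sum_{i=0}^{d-1}\p_d^i$, and identify each inner sum through Proposition \ref{integral rep nFn-1} (ii). Your write-up in fact spells out more than the paper does (the hypothesis checks, the symmetry of ${}_2F_1$ in its numerator parameters, and the Gauss-sum simplification of the Jacobi-sum ratio), all of which are handled correctly.
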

\begin{proof}
For $\lambda=0$, it is clear. 
Suppose that $\lambda\neq 0$. By Theorem \ref{FD int ana} (i), we have
\begin{align*}
&-j(\a^d, \ol{\a}\c)F_D^{(2d-1)}\Bigg({{\a^d;\overbrace{\a\ol{\c},\dots,\a\ol{\c}}^{d-1},\overbrace{\b,\dots,\b}^{d}}\atop{\a^{d-1}\c}};\xi,\dots,\xi^{d-1},\lambda,\xi \lambda,\dots,\xi^{d-1}\lambda\Bigg)\\
&=\sum_{t\in\k^\times}\a(t^d)\ol{\a}\c(1-t^d)\ol{\b}(1-\lambda^d t^d)\\
&=\sum_{i=0}^{d-1}\sum_{t\in\k^\times}\p_d^i\a(t)\ol{\a}\c(1-t)\ol{\b}(1-\lambda^dt).
\end{align*}
Here, note that
\begin{equation*}
\sum_{i=0}^{d-1} \p_d^i(t)=\begin{cases} d &(\p_d(t)=1),\\ 0&(\mbox{otherwise}).\end{cases}
\end{equation*}
Thus, the theorem follows from Proposition \ref{integral rep nFn-1} (ii).
\end{proof}

\subsection{The cases of $F_A$ and $F_B$}
In the complex case, Lauricella's functions $F_A^{(n)}$ have the integral representation (cf. \cite[Theorem 3.4.1]{Matsumoto})
\begin{align*}
&\Big( \prod_{i=1}^n B(b_i,c_i-b_i)\Big) \FA{n}{a;b_1,\dots,b_n}{c_1,\dots,c_n}{z_1,\dots,z_n}\\
&=\int_0^1\cdots\int_0^1 \Big(1-\sum_{i=1}^n z_iu_i\Big)^{-a}\prod_{i=1}^n u_i^{b_i-1}(1-u_i)^{c_i-b_i-1}\,du_1\cdots du_n,
\end{align*} 
if $0<{\rm Re}(b_j)<{\rm Re}(c_j)$ for all $j$.
\begin{thm}\label{FA int ana}
Suppose that $\a\neq\e$ and $\b_i\neq\c_i$ for all $i$. Then, for $\lambda_i\in\k^\times$, 
\begin{align*}
&\Big( \prod_{i=1}^n -j(\b_i,\ol{\b_i}\c_i) \Big) \FA{n}{\a;\b_1,\dots,\b_n}{\c_1,\dots,\c_n}{\lambda_1,\dots,\lambda_n}\\
&=\sum_{u_1,\dots,u_n\in \k^\times}\ol{\a}\Big(1-\sum_{i=1}^n \lambda_i u_i\Big)\prod_{i=1}^n \b_i(u_i)\ol{\b_i}\c_i(1-u_i).
\end{align*}
\end{thm}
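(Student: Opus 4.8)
The plan is to mimic the proof of Theorem~\ref{FD int ana}: regard the right-hand side as a function on $(\k^\times)^n$, compute its Fourier transform, and recover the asserted identity from the inversion formula \eqref{Fourier}. Explicitly, I would set
\[
f(\lambda_1,\dots,\lambda_n)=\sum_{u_1,\dots,u_n\in\k^\times}\ol\a\Big(1-\sum_{i=1}^n\lambda_iu_i\Big)\prod_{i=1}^n\b_i(u_i)\,\ol{\b_i}\c_i(1-u_i)
\]
and compute $\widehat f(\nu_1,\dots,\nu_n)=\sum_{t_i\in\k^\times}f(t_1,\dots,t_n)\prod_i\ol{\nu_i}(t_i)$. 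Substituting $s_i=t_iu_i$ for each $i$ replaces $\ol{\nu_i}(t_i)$ by $\ol{\nu_i}(s_i)\nu_i(u_i)$ and turns $\ol\a(1-\sum_i t_iu_i)$ into $\ol\a(1-\sum_i s_i)$, so the double sum decouples: the sum over the $u_i$ factors over $i$ into a product of two-variable Jacobi sums $\sum_{u_i\in\k^\times}(\b_i\nu_i)(u_i)\,\ol{\b_i}\c_i(1-u_i)$, while the sum over the $s_i$ becomes a single $(n+1)$-variable Jacobi sum $\sum_{s_i\in\k^\times}\ol\a(1-\sum_i s_i)\prod_i\ol{\nu_i}(s_i)$. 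Identifying these (up to the powers of $-1$ built into the definition of $j$) as $j(\b_i\nu_i,\ol{\b_i}\c_i)$ and $j(\ol\a,\ol{\nu_1},\dots,\ol{\nu_n})$ respectively presents $\widehat f(\nu_1,\dots,\nu_n)$ as a signed product $\big(\prod_{i=1}^n j(\b_i\nu_i,\ol{\b_i}\c_i)\big)j(\ol\a,\ol{\nu_1},\dots,\ol{\nu_n})$.

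Next I would rewrite these Jacobi sums using \eqref{J=G}; this is where the hypotheses enter, since the non-degenerate form of \eqref{J=G} must apply. The assumption $\b_i\neq\c_i$ forces $\ol{\b_i}\c_i\neq\e$, so $j(\b_i\nu_i,\ol{\b_i}\c_i)$ is never the exceptional term no matter what $\nu_i$ is (in particular the value $\nu_i=\ol{\b_i}$, where $\b_i\nu_i=\e$, causes no trouble); and $\a\neq\e$ forces $\ol\a\neq\e$, so the same holds for $j(\ol\a,\ol{\nu_1},\dots,\ol{\nu_n})$. Converting to Gauss sums, using \eqref{Gauss sum thm} to trade $g(\ol\eta)$ for $g^\circ(\eta)$, the equality $g^\circ(\e)=q$, and the Pochhammer definitions together with \eqref{Poch formula 2}, I expect to arrive at the two reductions
\[
j(\b_i\nu_i,\ol{\b_i}\c_i)=\frac{(\b_i)_{\nu_i}}{(\c_i)_{\nu_i}^\circ}\,j(\b_i,\ol{\b_i}\c_i),\qquad
j(\ol\a,\ol{\nu_1},\dots,\ol{\nu_n})=\frac{(\a)_{\nu_1\cdots\nu_n}}{\prod_{i=1}^n(\e)_{\nu_i}^\circ}.
\]
Inserting these exhibits $\widehat f(\nu_1,\dots,\nu_n)$ as $\big(\prod_{i=1}^n j(\b_i,\ol{\b_i}\c_i)\big)$, up to sign, times precisely the summand in the definition of $\FA{n}{\a;\b_1,\dots,\b_n}{\c_1,\dots,\c_n}{\lambda_1,\dots,\lambda_n}$. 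Applying the inversion formula \eqref{Fourier} and comparing its prefactor $1/(q-1)^n$ with the prefactor $1/(1-q)^n$ in the definition of $F_A^{(n)}$ then yields the theorem.

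As with Theorem~\ref{FD int ana}, the real content lies only in the decoupling substitution $s_i=t_iu_i$ and in \eqref{J=G}; I anticipate that the sole genuine difficulty is the sign and normalization bookkeeping — keeping straight the factors of $-1$ coming from the $n$ two-variable Jacobi sums, from the single $(n+1)$-variable Jacobi sum, and from the mismatch between $(q-1)^n$ and $(1-q)^n$ — together with the check indicated above that the degenerate cases of \eqref{J=G} are never triggered under the hypotheses $\a\neq\e$ and $\b_i\neq\c_i$.
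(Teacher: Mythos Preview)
Your proposal is correct and follows essentially the same route as the paper: define $f$ as the right-hand side, compute $\widehat f$ via the substitution $s_i=t_iu_i$ to decouple into $\big(\prod_i j(\ol{\b_i}\c_i,\b_i\nu_i)\big)j(\ol\a,\ol{\nu_1},\dots,\ol{\nu_n})$, convert to Pochhammer symbols using \eqref{J=G} and \eqref{Poch formula 2} under the hypotheses $\a\neq\e$, $\b_i\neq\c_i$, and apply \eqref{Fourier}. The sign bookkeeping you flag is exactly the only thing to keep track of, and it works out as you anticipate.
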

\begin{proof}
Write $f(\lambda_1,\dots,\lambda_n)$ for the right-hand side of the theorem.
Then, putting $s_i=t_iu_i$ and using \eqref{J=G} and \eqref{Poch formula 2}, we have
\begin{align*}
\widehat{f}(\nu_1,\dots,\nu_n)&=\sum_{t_1,\dots,t_n\in \k^\times}\sum_{u_1,\dots,u_n\in\k^\times}\ol{\a}\Big(1-\sum_{i} t_i u_i\Big)\prod_{i} \b_i(u_i)\ol{\b_i}\c_i(1-u_i)\ol{\nu_i}(t_i)\\
&=\Big(\prod_{i}\sum_{u_i\in\k^\times} \b_i\nu_i(u_i)\ol{\b_i}\c_i(1-u_i)\Big)\sum_{s_1,\dots,s_n\in\k^\times}\ol{\a}\Big(1-\sum_{i} s_i\Big)\prod_{i}\ol{\nu_i}(s_i)\\
&=\Big(\prod_{i} j(\ol{\b_i}\c_i,\b_i\nu_i)\Big) j(\ol{\a},\ol{\nu_1},\dots,\ol{\nu_n})\\
&=\Big(\prod_{i} j(\b_i,\ol{\b_i}\c_i) \dfrac{(\b_i)_{\nu_i}}{(\c_i)_{\nu_i}^\circ}\Big)\dfrac{(\a)_{\nu_1\cdots \nu_n}}{(\e)_{\nu_1}^\circ\cdots(\e)_{\nu_n}^\circ}.
\end{align*}
Thus, we obtain the theorem by \eqref{Fourier}.
\end{proof}

Lauricella's $F_A^{(n)}$ have another integral representation (\cite{Kita}, see also \cite[Theorem 3.4.1]{Matsumoto}) as
\begin{align}
&\dfrac{\Big(\prod_{i=1}^n \Gamma(1-c_i)\Big)\Gamma(\sum_{i=1}^n c_i-a-n+1)}{\Gamma(1-a)} \FA{n}{a; b_1,\dots,b_n}{c_1,\dots,c_n}{z_1,\dots,z_n} \label{FA int 2}\\
& = \int_{\Delta'} \Big( \prod_{i=1}^n \Big(1 - \dfrac{z_i}{u_i} \Big)^{-b_i} \Big) \Big(\prod_{i=1}^n u_i^{-c_i}\Big) \Big( 1-\sum_{i=1}^n u_i \Big)^{\sum_{i=1}^n c_i -a-n}\, du_1\cdots du_n,\nonumber
\end{align}
where $\Delta'$ is a twisted cycle constructed in \cite{Kita}, if $c_1,\dots, c_n, \sum_i c_i-a\not\in\Z$.

\begin{thm}\label{FA int ana 2}
Suppose that $\ol{\a}\c_1\cdots\c_n, \b_i \neq \e$ for all $i$. Then, for $\lambda_i \in \k^\times$,
\begin{align*}
&(-1)^n\dfrac{\Big( \prod_{i=1}^n g(\ol{\c_i})\Big) g(\ol{\a}\c_1\cdots \c_n)}{g^\circ (\ol{\a})} \FA{n}{\a;\b_1,\dots,\b_n}{\c_1,\dots,\c_n}{\lambda_1,\dots,\lambda_n}\\
&=\sum_{u_1,\dots,u_n \in \k^\times} \Big( \prod_{i=1}^n \ol{\b_i}\Big( 1-\dfrac{\lambda_i}{u_i}\Big)\Big) \Big(\prod_{i=1}^n \ol{\c_i}(u_i)\Big) \ol{\a}\c_1\cdots\c_n\Big(1-\sum_{i=1}^n u_i\Big).
\end{align*}
\end{thm}

\begin{proof}
Write $f(\lambda_1,\dots,\lambda_n)$ for the right-hand side of the theorem.
Then, putting $s_i = t_i/u_i$ and similarly as the proof of Theorem \ref{FA int ana}, we have
\begin{align*}
&\widehat{f}(\nu_1,\dots,\nu_n)\\
& = \sum_{t_1,\dots,t_n} \sum_{u_1,\dots,u_n} \Big( \prod_{i=1}^n \ol{\b_i}\Big( 1-\dfrac{t_i}{u_i}\Big) \ol{\nu_i}(t_i)\Big) \Big(\prod_{i=1}^n \ol{\c_i}(u_i)\Big) \ol{\a}\c_1\cdots\c_n\Big(1-\sum_{i=1}^n u_i\Big)\\
& = \sum_{u_1,\dots,u_n} \Big(\prod_{i=1}^n \ol{\c_i\nu_i}(u_i)\Big) \ol{\a}\c_1\cdots\c_n\Big(1-\sum_{i=1}^n u_i\Big) \sum_{s_1,\dots,s_n} \prod_{i=1}^n \ol{\b_i}(1-s_i)\ol{\nu_i}(s_i)\\
& = j(\ol{\c_1\nu_1},\dots,\ol{\c_n\nu_n},\ol{\a}\c_1\cdots\c_n)\prod_{i=1}^n j(\ol{\b_i},\ol{\nu_i})\\
& = \dfrac{\Big( \prod_{i=1}^n g(\ol{\c_i})\Big) g(\ol{\a}\c_1\cdots\c_n)}{g^\circ(\ol{\a})} \cdot \dfrac{(\a)_{\nu_1\cdots\nu_n} \prod_{i=1}^n (\b_i)_{\nu_i}}{\prod_{i=1}^n (\c_i)_{\nu_i}^\circ (\e)_{\nu_i}^\circ}.
\end{align*}
Thus, we obtain the theorem by \eqref{Fourier}.
\end{proof}

Lauricella's functions $F_B^{(n)}$ have the integral representation (cf. \cite[Theorem 3.4.1]{Matsumoto})
\begin{align*}
&\dfrac{\Big( \prod_{i=1}^n \Gamma(b_i)\Big) \Gamma(c-\sum_{i=1}^n b_i) }{\Gamma(c)}\FB{n}{a_1,\dots,a_n;b_1,\dots,b_n}{c}{z_1,\dots,z_n}\\
&=\int_\Delta \Big( \prod_{i=1}^n (1-z_iu_i)^{-a_i} \Big) \Big( \prod_{i=1}^n u_i^{b_i-1} \Big) \Big(1-\sum_{i=1}^n u_i\Big)^{c-\sum_{i=1}^n b_i-1}du_1\cdots du_n,\nonumber
\end{align*}
where $\Delta$ is as in \eqref{F1 integral}, if $0<{\rm Re}(b_i)$ for all $i$, and ${\rm Re}(\sum_i b_i)<{\rm Re}(c)$.
 
\begin{thm}\label{FB integral}
Suppose that $\a_i\neq\e$ for all $i$ and $\b_1\cdots\b_n\neq\c$. Then, for $\lambda_i\in\k^\times$,
\begin{align*}
&(-1)^n\dfrac{\Big( \prod_{i=1}^n g(\b_i) \Big) g(\ol{\b_1\cdots\b_n}\c) }{g^\circ(\c)}\FB{n}{\a_1,\dots,\a_n;\b_1,\dots,\b_n}{\c}{\lambda_1,\dots,\lambda_n}\\
&\quad=\sum_{u_1,\dots,u_n\in\k^\times} \Big( \prod_{i=1}^n \ol{\a_i}(1-\lambda_iu_i) \Big) \Big( \prod_{i=1}^n \b_i(u_i) \Big) \ol{\b_1\cdots\b_n}\c\Big(1-\sum_{i=1}^n u_i\Big).
\end{align*}
\end{thm}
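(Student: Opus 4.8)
The plan is to run the Fourier-analytic argument already used for Theorems \ref{FD int ana} and \ref{FA int ana}; in fact the computation is almost verbatim the one in the proof of Theorem \ref{FD int ana} (ii), the only change being that the single factor $\ol{\a}(1-\sum_i\lambda_iu_i)$ occurring there is replaced here by the product $\prod_i\ol{\a_i}(1-\lambda_iu_i)$. (One could instead deduce the statement from Theorem \ref{FA int ana} via the $F_B^{(n)}\leftrightarrow F_A^{(n)}$ identity in the remark after the definitions, but that route is messier because of the change $\lambda_i\mapsto1/\lambda_i$, so I would argue directly.)

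Write $f(\lambda_1,\dots,\lambda_n)$ for the right-hand side of the theorem; by \eqref{Fourier} it is enough to compute its Fourier transform $\widehat{f}(\nu_1,\dots,\nu_n)=\sum_{t_i\in\k^\times}f(t_1,\dots,t_n)\prod_i\ol{\nu_i}(t_i)$ and match it against the series defining $F_B^{(n)}$. Substituting $s_i=t_iu_i$ for each $i$ (legitimate since $u_i\in\k^\times$) and using $\ol{\nu_i}(t_i)=\ol{\nu_i}(s_i)\nu_i(u_i)$, the double sum separates; absorbing each $\nu_i(u_i)$ into $\b_i(u_i)$ and discarding the vanishing boundary terms (those with $\sum_i u_i=1$ or with some $s_i=1$, which drop since $\b_1\cdots\b_n\neq\c$ and $\a_i\neq\e$) one gets
\begin{align*}
\widehat{f}(\nu)&=\Big(\sum_{u_1,\dots,u_n\in\k^\times}\Big(\prod_i(\b_i\nu_i)(u_i)\Big)\ol{\b_1\cdots\b_n}\c\Big(1-\sum_i u_i\Big)\Big)\\
&\qquad\cdot\prod_{i=1}^n\Big(\sum_{s_i\in\k^\times}\ol{\nu_i}(s_i)\,\ol{\a_i}(1-s_i)\Big)\\
&=j(\ol{\b_1\cdots\b_n}\c,\b_1\nu_1,\dots,\b_n\nu_n)\prod_{i=1}^n j(\ol{\nu_i},\ol{\a_i}),
\end{align*}
where the first Jacobi sum is $(n+1)$-fold and the remaining $n$ are ordinary two-character ones.

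Next I would pass from Jacobi sums to Gauss sums by \eqref{J=G} — the hypotheses $\b_1\cdots\b_n\neq\c$ and $\a_i\neq\e$ are precisely what keeps every $j$ in the non-degenerate branch of \eqref{J=G} — and then, using the definitions of $(\a)_\nu$ and $(\a)_\nu^\circ$ together with \eqref{Poch formula 2} and \eqref{Gauss sum thm} exactly as in Theorem \ref{FD int ana} (ii), rewrite
\begin{align*}
j(\ol{\b_1\cdots\b_n}\c,\b_1\nu_1,\dots,\b_n\nu_n)&=\frac{\big(\prod_i g(\b_i)\big)\,g(\ol{\b_1\cdots\b_n}\c)}{g^\circ(\c)}\cdot\frac{\prod_i(\b_i)_{\nu_i}}{(\c)_{\nu_1\cdots\nu_n}^\circ},\\
j(\ol{\nu_i},\ol{\a_i})&=\frac{(\a_i)_{\nu_i}}{(\e)_{\nu_i}^\circ}.
\end{align*}
Their product is exactly the prefactor $\big(\prod_i g(\b_i)\big)g(\ol{\b_1\cdots\b_n}\c)/g^\circ(\c)$ appearing in the statement, times the $(\nu_1,\dots,\nu_n)$-summand of the defining series of $F_B^{(n)}$; substituting into \eqref{Fourier} and comparing with that series gives the identity.

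The only genuine work is this last (routine but lengthy) Gauss-sum bookkeeping, together with keeping careful track of signs — those coming from the $(-1)^{n-1}$ built into the definition of the Jacobi sum (the contributions of the $(n+1)$-fold Jacobi sum and of the $n$ two-character ones above cancel each other) and the one produced when \eqref{Fourier}, normalised by $(q-1)^{-n}$, is matched against the $(1-q)^{-n}$-normalised series defining $F_B^{(n)}$. All of it proceeds exactly as in the proof of Theorem \ref{FD int ana} (ii), which I would mirror step by step; there is no essentially new difficulty.
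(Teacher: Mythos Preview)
Your proposal is correct and follows essentially the same route as the paper's own proof: compute the Fourier transform of the right-hand side via the substitution $s_i=t_iu_i$, recognise the result as $j(\ol{\b_1\cdots\b_n}\c,\b_1\nu_1,\dots,\b_n\nu_n)\prod_i j(\ol{\a_i},\ol{\nu_i})$, convert to Gauss sums by \eqref{J=G} and to Pochhammer symbols by \eqref{Poch formula 2}, and invoke \eqref{Fourier}. One small slip of phrasing: the ``boundary terms'' with $\sum_i u_i=1$ or $s_i=1$ vanish by the convention $\eta(0)=0$ for every character, not because of the hypotheses $\b_1\cdots\b_n\neq\c$, $\a_i\neq\e$; those hypotheses are used exactly where you later say, namely to stay in the non-degenerate branch of \eqref{J=G}.
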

\begin{proof}
Write $f(\lambda_1,\dots,\lambda_n)$ for the right-hand side of the theorem.
Letting $s_i=t_iu_i$ for all $h$ and using \eqref{J=G} (note that $\a_i\neq \e$ and $\b_1\cdots\b_n\neq\c$) and \eqref{Poch formula 2}, we have
\begin{align*}
&\widehat{f}(\nu_1,\dots,\nu_n)\\
&=\sum_{t_1,\dots,t_n\in\k^\times}\sum_{u_1,\dots,u_n\in\k^\times}\Big(\prod_i \b_i(u_i)\Big)\ol{\b_1\cdots\b_n}\c\Big(1-\sum_i u_i\Big)\prod_i \ol{\a_i}(1-t_iu_i)\ol{\nu_i}(t_i)\\
&=\sum_{u_1,\dots,u_n\in\k^\times}\Big(\prod_i \b_i\nu_i(u_i)\Big) \ol{\b_1\cdots\b_n}\c\Big(1-\sum_i u_i\Big)\prod_i\Big(\sum_{s_i\in\k^\times}\ol{\a_i}(1-s_i)\ol{\nu_i}(s_i)\Big)\\
&=j(\ol{\b_1\cdots\b_n}\c,\b_1\nu_1,\dots,\b_n\nu_n)\prod_i j(\ol{\a_i},\ol{\nu_i})\\
&=\dfrac{\Big( \prod_{i=1}^n g(\b_i)\Big)g(\ol{\b_1\cdots\b_n}\c)}{g^\circ(\c)}\cdot\dfrac{\prod_i (\b_i)_{\nu_i}}{(\c)_{\nu_1\cdots\nu_n}^\circ}\cdot\prod_i \dfrac{(\a_i)_{\nu_i}}{(\e)_{\nu_i}^\circ}.
\end{align*}
Thus, we obtain the theorem by \eqref{Fourier}.
\end{proof}

\begin{rem}
Theorem \ref{FB integral} is equivalent to Theorem \ref{FA int ana} via Remark \ref{FA=FB}.
\end{rem}

\subsection{The case of $F_C$}
In the complex case, Lauricella's functions $F_C^{(n)}$ have the integral representation (cf. \cite[Remark 4.4]{Mimachi-Noumi})
\begin{align*}
&\dfrac{\Big( \prod_{i=1}^n \Gamma(1-c_i) \Big) \Gamma(c_1+\cdots+c_n+1-n-a) }{\Gamma(1-a)} \FC{n}{a;b}{c_1,\dots,c_n}{z_1,\dots,z_n}\\
&=\int_{\Delta'} \Big( 1-\sum_{i=1}^n \dfrac{z_i}{t_i}\Big)^{-b} \Big(\prod_{i=1}^n t_i^{-c_i}\Big) \Big( 1-\sum_{i=1}^n t_i \Big)^{\sum_{i=1}^n c_i -a-n}\, dt_1\cdots dt_n,
\end{align*}
where $\Delta'$ is as in \eqref{FA int 2}, if $c_1,\dots, c_n, \sum_i c_i-a\not\in\Z$.

\begin{thm}\label{FC integral}
Suppose that $\ol{\a}\c_1\cdots\c_n,\b \neq \e$. Then, for $\lambda_i\neq\k^\times$, 
\begin{align*}
&(-1)^n\dfrac{\Big( \prod_{i=1}^n g(\ol{\c_i})\Big) g(\ol{\a}\c_1\cdots \c_n)}{g^\circ (\ol{\a})} \FC{n}{\a;\b}{\c_1,\dots,\c_n}{\lambda_1,\dots,\lambda_n}\\
&=\sum_{u_1,\dots,u_n \in \k^\times} \ol{\b}\Big(1-\sum_{i=1}^n \dfrac{\lambda_i}{u_i}\Big) \Big(\prod_{i=1}^n \ol{\c_i}(u_i)\Big) \ol{\a}\c_1\cdots\c_n\Big(1-\sum_{i=1}^n u_i\Big).
\end{align*}
\end{thm}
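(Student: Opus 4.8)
The plan is to follow the pattern of the proofs of Theorems~\ref{FD int ana}, \ref{FA int ana} and~\ref{FB integral}. Write $f(\lambda_1,\dots,\lambda_n)$ for the right-hand side of the asserted identity, compute its Fourier transform $\widehat f$, recognize $\widehat f$ as a product of two Jacobi sums, and then read off the identity from the inversion formula~\eqref{Fourier} together with the defining series of $F_C^{(n)}$ over $\k$.

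The only new feature compared with the $F_A$ and $F_B$ cases is that $f$ involves $1-\sum_i\lambda_i/u_i$ in place of $1-\sum_i\lambda_i u_i$, and this causes no real difficulty. In $\widehat f(\nu_1,\dots,\nu_n)=\sum_{t_i\in\k^\times}f(t_1,\dots,t_n)\prod_i\ol{\nu_i}(t_i)$ I would substitute $s_i=t_i/u_i$ for each $i$; since $u_i\in\k^\times$, for fixed $u_i$ the map $t_i\mapsto s_i$ is a bijection of $\k^\times$, and $\ol{\nu_i}(t_i)=\ol{\nu_i}(s_i)\ol{\nu_i}(u_i)$. The resulting double sum then separates as the product of
\begin{equation*}
\sum_{u_1,\dots,u_n\in\k^\times}\Big(\prod_i\ol{\c_i\nu_i}(u_i)\Big)\ol\a\c_1\cdots\c_n\Big(1-\sum_i u_i\Big)
\quad\text{and}\quad
\sum_{s_1,\dots,s_n\in\k^\times}\ol\b\Big(1-\sum_i s_i\Big)\prod_i\ol{\nu_i}(s_i).
\end{equation*}
Since $\ol\a\c_1\cdots\c_n\neq\e$, the summand of the first sum vanishes whenever $1-\sum_i u_i=0$, so that sum is a Jacobi sum $j(\ol\a\c_1\cdots\c_n,\ol{\c_1\nu_1},\dots,\ol{\c_n\nu_n})$ up to sign; similarly $\b\neq\e$ makes the second sum $j(\ol\b,\ol{\nu_1},\dots,\ol{\nu_n})$ up to sign.

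To conclude, I would rewrite each Jacobi sum in terms of Gauss sums via~\eqref{J=G} --- which is exactly where the hypotheses $\ol\a\c_1\cdots\c_n\neq\e$ and $\b\neq\e$ are needed --- and then simplify using~\eqref{Gauss sum thm} and~\eqref{Poch formula 2}, just as in the proof of Theorem~\ref{FD int ana}(ii). Since the product of all the characters occurring in the first Jacobi sum is $\ol{\a\nu_1\cdots\nu_n}$ and that in the second is $\ol{\b\nu_1\cdots\nu_n}$, this yields
\begin{equation*}
\widehat f(\nu_1,\dots,\nu_n)=\frac{\big(\prod_{i=1}^n g(\ol{\c_i})\big)g(\ol\a\c_1\cdots\c_n)}{g^\circ(\ol\a)}\cdot\frac{(\a)_{\nu_1\cdots\nu_n}(\b)_{\nu_1\cdots\nu_n}}{\prod_{i=1}^n(\c_i)_{\nu_i}^\circ\prod_{i=1}^n(\e)_{\nu_i}^\circ},
\end{equation*}
and the theorem follows from~\eqref{Fourier}. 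I do not anticipate a genuine obstacle: the argument is structurally the same as in the $F_A$ and $F_B$ cases, and the real work is just the Gauss-sum bookkeeping together with the handful of degenerate cases (a $\nu_i$, or a product of characters, equal to $\e$) that the two hypotheses are there to suppress.
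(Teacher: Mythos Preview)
Your proposal is correct and follows essentially the same approach as the paper: compute the Fourier transform of the right-hand side via the substitution $s_i=t_i/u_i$, factor it as the product of the Jacobi sums $j(\ol{\c_1\nu_1},\dots,\ol{\c_n\nu_n},\ol\a\c_1\cdots\c_n)$ and $j(\ol{\nu_1},\dots,\ol{\nu_n},\ol\b)$, convert to Gauss sums using~\eqref{J=G} under the hypotheses, and simplify with~\eqref{Poch formula 2} before applying~\eqref{Fourier}.
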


\begin{proof}
Write $f(\lambda_1,\dots, \lambda_n)$ for the right-hand side of the theorem. Letting $s_i=t_i/u_i$ and using (\ref{J=G}) (note that $\ol{\a}\c_1\cdots\c_n, \b\neq\e$) and \eqref{Poch formula 2}, we have
\begin{align*}
&\widehat{f}(\nu_1,\dots,\nu_n)\\
&=\sum_{t_1,\dots,t_n}\sum_{u_1,\dots,u_n} \ol{\a}\c_1\cdots\c_n \Big(1-\sum_i u_i \Big)\ol{\b}\Big(1-\sum_i \dfrac{t_i}{u_i} \Big)\prod_{i} \ol{\c_i}(u_i)\ol{\nu_i}(t_i)\\
&=\sum_{u_1,\dots,u_n}\Big(\prod_i \ol{\c_i\nu_i}(u_i)\Big) \ol{\a}\c_1\cdots\c_n\Big(1-\sum_i u_i\Big) \sum_{s_1,\dots,s_n}\Big(\prod_i \ol{\nu_i}(s_i)\Big)\ol{\b}\Big(1-\sum_i s_i\Big)\\
&=j(\ol{\c_1\nu_1},\dots, \ol{\c_n\nu_n},\ol{\a}\c_1\cdots\c_n) j(\ol{\nu_1},\dots,\ol{\nu_n},\ol{\b})\\
&=\dfrac{\Big( \prod_i g(\ol{\c_i})\Big) g(\ol{\a}\c_1\cdots\c_n)}{g^\circ(\ol{\a})}\cdot \dfrac{(\a)_{\nu_1\cdots\nu_n}}{\prod_i (\c_i)^\circ_{\nu_i}} \cdot \frac{(\b)_{\nu_1\cdots\nu_n}}{\prod_i (\e)^\circ_{\nu_i}}.
\end{align*}
Thus, we obtain the theorem by (\ref{Fourier}).
\end{proof}

In the complex case, Burchnall-Chaundy \cite{B.C.I} proved the expansion formula  
\begin{align}
&F_4(a;b;c_1,c_2;x(1-y),y(1-x))\label{B-C exp}\\
&=\sum_{r=0}^\infty \dfrac{(a)_r(b)_r(1+a+b-c_1-c_2)_r}{(1)_r(c_1)_r(c_2)_r}x^ry^r\nonumber\\
&\quad\quad\quad\times\hF{2}{1}{a+r,b+r}{c_1+r}{x}\hF{2}{1}{a+r,b+r}{c_2+r}{y},\nonumber
\end{align}
(an alternative proof was given by Bailey \cite{Bailey F4}).
From this they deduced, by using \eqref{2F1 int} and $\hF{1}{0}{a}{}{z}=(1-z)^{-a}$, the integral representation
\begin{align}
&B(a,c_1-a)B(b,c_2-b)F_4(a;b;c_1,c_2;x(1-y),y(1-x))\label{F4 int}\\
&=\int_0^1\int_0^1u^{a-1}v^{b-1} (1-u)^{c_1-a-1}(1-v)^{c_2-b-1}\nonumber\\
&\hspace{30pt}\times (1-xu)^{a-c_1-c_2+1}(1-yv)^{b-c_1-c_2+1}(1-xu-yv)^{c_1+c_2-a-b-1}dudv,\nonumber
\end{align}
provided that $0<{\rm Re}(a)<{\rm Re}(c_1)$, $0<{\rm Re}(b)<{\rm Re}(c_2)$ and $|x|, |y|$ are small enough to make the double integral convergent. 
We prove finite analogues of these formulas.

The following lemmas will be used in the proof of Proposition \ref{key prop}, from which we will deduce finite analogues of \eqref{B-C exp} and \eqref{F4 int} (Theorem \ref{F4=F(x)F(y)} and  Theorem \ref{F4 int ana}, respectively).

\begin{lem}[{\cite[Theorem 1.1]{TB}}]\label{F4=F(1)F(1)}
For any $x,y\neq1$,
\begin{align*}
&\ol{\a}(1-x)\ol{\b}(1-y)F_4\left(\a;\b;\c_1,\c_2;\dfrac{-x}{(1-x)(1-y)},\dfrac{-y}{(1-x)(1-y)}\right)\\
&=\dfrac{1}{(1-q)^2}\sum_{\mu,\nu}\dfrac{(\a)_\mu(\b)_\nu}{(\e)_\mu^\circ(\e)_\nu^\circ}\hF{2}{1}{\b\nu,\ol{\mu}}{\c_1}{1}\hF{2}{1}{\a\mu,\ol{\nu}}{\c_2}{1}\mu(x)\nu(y).
\end{align*}
\end{lem}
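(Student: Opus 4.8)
The statement to prove is a finite-field analogue of the Burchnall--Chaundy expansion \eqref{B-C exp}, written in terms of the argument $\bigl(\tfrac{-x}{(1-x)(1-y)},\tfrac{-y}{(1-x)(1-y)}\bigr)$ rather than $(x(1-y),y(1-x))$; the two forms are related by the standard involutive change of variables that sends Appell's $F_4$ argument to itself. Since this lemma is quoted from \cite{TB}, I would give a self-contained derivation starting from the definition of $F_4=\FC{2}{\a,\b}{\c_1,\c_2}{\cdot,\cdot}$ given in subsection \ref{subsection of hgf}. The plan is to expand the right-hand side using the definition of ${}_2F_1$ over $\k$ and the Vandermonde evaluation (Proposition \ref{Vandermonde}), and to match the resulting double character sum with the left-hand side after a Fourier-type change of summation variables.

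First I would substitute the two ${}_2F_1(\cdots;1)$ factors by their closed forms. By Proposition \ref{Vandermonde}(i), for generic characters $\hF{2}{1}{\b\nu,\ol{\mu}}{\c_1}{1}=q^{-\d(\ol{\b}\ol{\nu}\c_1)}\dfrac{(\ol{\b}\ol{\nu}\c_1)_\mu}{(\c_1)_\mu^\circ}$, and similarly for the other factor with $(\mu,\nu)$ and $(\a,\c_2)$ interchanged; the finitely many exceptional pairs where $\{\b\nu,\ol\mu\}=\{\e,\c_1\}$ are handled by Proposition \ref{Vandermonde}(ii), and I expect their contributions to cancel against the analogous correction terms on the left, exactly as the $\d$-terms did in the proofs of Theorems \ref{FD int ana}--\ref{FC integral}. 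Plugging these in, the right-hand side becomes $\dfrac{1}{(1-q)^2}\sum_{\mu,\nu}\dfrac{(\a)_\mu(\b)_\nu(\ol{\b}\ol{\nu}\c_1)_\mu(\ol{\a}\ol{\mu}\c_2)_\nu}{(\e)_\mu^\circ(\e)_\nu^\circ(\c_1)_\mu^\circ(\c_2)_\nu^\circ}\,\mu(x)\nu(y)$ (up to the $q^{\d}$ bookkeeping), after using $(\c_1)_\mu^\circ$ to merge $q^{-\d}$ factors. Using the Pochhammer identity \eqref{Poch formula} to rewrite $(\a)_\mu(\ol{\a}\ol{\mu}\c_2)_\nu$ and $(\b)_\nu(\ol{\b}\ol{\nu}\c_1)_\mu$ as products involving $(\a)_{\mu\nu}$-type and $(\b)_{\mu\nu}$-type symbols — this is the analogue of the complex identity $(a)_r(a+r)_{\text{something}}=(a)_{\cdots}$ that produces the $F_4$ kernel — should collapse the double sum into the shape of $\FC{2}{\ldots}{\ldots}{\cdot,\cdot}$ composed with the two $\ol{\a}(1-x)$, $\ol{\b}(1-y)$ prefactors.

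The cleanest route, and the one I would actually write, is to go the other direction: start from the left-hand side. Expand $F_4\bigl(\a;\b;\c_1,\c_2;\tfrac{-x}{(1-x)(1-y)},\tfrac{-y}{(1-x)(1-y)}\bigr)$ by its defining sum over $\nu_1,\nu_2$, so that $\ol{\a}(1-x)\ol{\b}(1-y)$ times this equals $\dfrac{1}{(1-q)^2}\sum_{\nu_1,\nu_2}\dfrac{(\a)_{\nu_1\nu_2}(\b)_{\nu_1\nu_2}}{(\c_1)_{\nu_1}^\circ(\c_2)_{\nu_2}^\circ(\e)_{\nu_1}^\circ(\e)_{\nu_2}^\circ}\,\ol{\a}\nu_1\nu_2(1-x)\,\ol{\b}\nu_1\nu_2(1-y)\,\ol{\nu_1}\ol{\nu_2}(-1)\,\nu_1(x)\nu_2(y)$ after absorbing the $(1-x)(1-y)$ denominators into the characters. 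Then apply the finite binomial expansion $\hF{1}{0}{\eta}{}{z}=\ol\eta(1-z)$ (Proposition \ref{integral rep nFn-1}(i)) in reverse to write each $\ol{\a}\nu_1\nu_2(1-x)$ as a single sum $\tfrac{1}{1-q}\sum_\mu \tfrac{(\a\ol{\nu_1}\ol{\nu_2})_\mu}{(\e)_\mu^\circ}\mu(x)$ — this is the step that introduces the outer summation index $\mu$ — and likewise for the $(1-y)$ factor. Interchanging the order of summation and collecting the $x$- and $y$-powers, one recognizes the inner sums over $\nu_1,\nu_2$ as Vandermonde sums $\hF{2}{1}{\cdots}{\cdots}{1}$, which is precisely the right-hand side. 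The main obstacle is the careful tracking of the $g^\circ$ versus $g$ and the $q^{\d(\cdot)}$ discrepancies together with the boundary characters where Proposition \ref{Vandermonde} or \ref{integral rep nFn-1}(i) has an exceptional value; as in Otsubo's and the author's earlier proofs these exceptional terms organize into a few $\d$-supported corrections that must be shown to cancel, and verifying that cancellation — rather than any conceptual difficulty — is where the real work lies.
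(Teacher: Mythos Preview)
The paper does not give its own proof of this lemma; it is quoted verbatim from \cite[Theorem~1.1]{TB}, so there is nothing in the present paper to compare against. Your second route (start from the left, expand $F_4$ by definition, absorb the prefactors, expand the two $(1-x)$, $(1-y)$ factors via Proposition~\ref{integral rep nFn-1}(i), then recognise the inner sums as ${}_2F_1(\cdots;1)$ from the \emph{definition}) is correct and is essentially the argument of \cite{TB}.

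Two comments. First, there is a sign slip in your intermediate expression: after absorbing the denominators $(1-x)(1-y)$ the character on $(1-x)$ is $\ol{\a\nu_1\nu_2}$, not $\ol{\a}\nu_1\nu_2$, and correspondingly the ${}_1F_0$ expansion carries $(\a\nu_1\nu_2)_\mu$, not $(\a\ol{\nu_1}\ol{\nu_2})_\mu$. This is harmless once you write things out, but as stated the substitution does not close up.

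Second, the ``real work'' you anticipate is not actually there. Since the hypothesis is $x,y\neq 1$, the exceptional branch of Proposition~\ref{integral rep nFn-1}(i) never occurs, and your second route never invokes Proposition~\ref{Vandermonde} at all: the ${}_2F_1(\cdots;1)$ factors on the right-hand side arise directly from the definition of ${}_2F_1$, not from an evaluation. After the change of summation variables $\mu'=\mu\ol{\nu_1}$, $\nu'=\nu\ol{\nu_2}$ and a single application of \eqref{Gauss sum thm} to match $(\ol{\mu})_{\nu_1}/(\e)_\mu^\circ$ with $\nu_1(-1)/(\e)_{\mu\ol{\nu_1}}^\circ$, the four-fold sums agree term by term and no $\delta$-supported corrections appear. (For $x=0$ or $y=0$ both sides vanish trivially.) So the proof is cleaner than you expect, and your first, more roundabout route through Vandermonde is unnecessary.
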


\begin{lem}\label{2F12F1=3F2}Suppose that $\a,\b\not\in\{\e,\c_1,\c_2\}$ and $\a\b\ol{\c_1\c_2}\neq\e$. 
For any $\mu,\nu\in\widehat{\k^\times}$, 
\begin{align*}
&\hF{2}{1}{\b\nu,\ol{\mu}}{\c_1}{1}\hF{2}{1}{\a\mu,\ol{\nu}}{\c_2}{1}\\
&=\dfrac{(\ol{\b}\c_1)_\mu(\ol{\a}\c_2)_\nu}{(\c_1)_\mu^\circ(\c_2)_\nu^\circ}\hF{3}{2}{\a\b\ol{\c_1\c_2},\ol{\mu},\ol{\nu}}{\b\ol{\c_1\mu},\a\ol{\c_2\nu}}{1}-j(\a\ol{\c_2},\b\ol{\c_1})\dfrac{(\e)_\mu^\circ(\e)_\nu^\circ}{(\c_1)_\mu^\circ(\c_2)_\nu^\circ}\\
&\quad-\dfrac{(1-q)^2}{q}\left(\d(\c_1\mu)\d(\b\nu)C_1+\d(\a\mu)\d(\c_2\nu)C_2\right),
\end{align*} 
where
\begin{align*}
&C_1:=q^{\d(\c_1)}\dfrac{g(\ol{\a\b}\c_1\c_2)g^\circ(\c_2)}{g^\circ(\ol{\a}\c_1\c_2)g(\ol{\b}\c_2)},\hspace{20pt}C_2:=q^{\d(\c_2)}\dfrac{g(\ol{\a\b}\c_1\c_2)g^\circ(\c_1)}{g^\circ(\ol{\b}\c_1\c_2)g(\ol{\a}\c_1)}.
\end{align*}
\end{lem}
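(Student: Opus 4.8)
The plan is to evaluate both sides in closed form, applying the finite Vandermonde theorem (Proposition \ref{Vandermonde}) to the two factors on the left and the finite Saalsch\"utz theorem (Proposition \ref{Saalschutz theorem}) to the ${}_3F_2$ on the right, and then to reconcile the resulting expressions by Gauss-sum manipulations.

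First I would observe that the ${}_3F_2$ on the right is Saalsch\"utzian: putting $\a'=\a\b\ol{\c_1\c_2}$, $\b'=\ol{\mu}$, $\nu'=\nu$ and $\c'=\b\ol{\c_1\mu}$, one has $\a'\b'\ol{\c'\nu'}=\a\ol{\c_2\nu}$, so that $\hF{3}{2}{\a\b\ol{\c_1\c_2},\ol{\mu},\ol{\nu}}{\b\ol{\c_1\mu},\a\ol{\c_2\nu}}{1}=\hF{3}{2}{\a',\b',\ol{\nu'}}{\c',\a'\b'\ol{\c'\nu'}}{1}$ is exactly of the shape in Proposition \ref{Saalschutz theorem}. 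Its hypotheses $\a'\neq\e$, $\b'\neq\c'$, $\a'\b'\ol{\c'}\neq\e$ translate into $\a\b\neq\c_1\c_2$, $\b\neq\c_1$, $\a\neq\c_2$, all of which are assumed. Applying that proposition and rewriting $\ol{\a'}\c'=\ol{\a\mu}\c_2$, $\ol{\b'}\c'=\b\ol{\c_1}$, $\ol{\a'\b'}\c'=\ol{\a}\c_2$, $\a'\b'\ol{\c'\nu'}=\a\ol{\c_2\nu}$, I would express the right-hand ${}_3F_2$ as a sum of three explicit terms: a Pochhammer main term, the Gauss-sum term $g^\circ(\b\ol{\c_1\mu})g^\circ(\a\ol{\c_2\nu})/(g(\a\b\ol{\c_1\c_2})g(\ol{\mu})g(\ol{\nu}))$, and a $\d$-correction.

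Next I would apply Proposition \ref{Vandermonde} to each factor on the left: $\hF{2}{1}{\b\nu,\ol{\mu}}{\c_1}{1}$ has principal value $q^{-\d(\ol{\b\nu}\c_1)}(\ol{\b\nu}\c_1)_\mu/(\c_1)_\mu^\circ$, with the part-(ii) correction $-(1-q)^2(1+q)^{\d(\c_1)}/q$ occurring exactly when $\{\b\nu,\ol{\mu}\}=\{\e,\c_1\}$, i.e. for the two pairs $(\mu,\nu)\in\{(\ol{\c_1},\ol{\b}),(\e,\ol{\b}\c_1)\}$; symmetrically for the second factor. Away from these finitely many $(\mu,\nu)$, multiplying the Saalsch\"utz expression by the prefactor $(\ol{\b}\c_1)_\mu(\ol{\a}\c_2)_\nu/((\c_1)_\mu^\circ(\c_2)_\nu^\circ)$ and cancelling common Pochhammer factors reduces the claimed identity to two pure Gauss-sum identities: that the product of the two Vandermonde principal values equals the prefactor times the Saalsch\"utz main term (which after simplification is $g(\ol{\b\nu}\c_1\mu)/g^\circ(\ol{\b\nu}\c_1)=(\ol{\b}\c_1)_\mu(\b\ol{\c_1})_\nu/(\b\ol{\c_1\mu})_\nu^\circ$), and that the prefactor times the Gauss-sum term equals $j(\a\ol{\c_2},\b\ol{\c_1})(\e)_\mu^\circ(\e)_\nu^\circ/((\c_1)_\mu^\circ(\c_2)_\nu^\circ)$. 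Both follow from repeated use of \eqref{Gauss sum thm} (in the form $g(\eta)g^\circ(\ol{\eta})=\eta(-1)q$, together with $\ol{\eta}(-1)=\eta(-1)$), together with \eqref{J=G} and the identity $(\e)_\nu^\circ=g^\circ(\nu)/q$, using $\a\neq\c_2$ and $\b\neq\c_1$.

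Finally I would check the remaining, degenerate pairs $(\mu,\nu)$. The symmetry of the statement under $(\a,\c_1,\mu)\leftrightarrow(\b,\c_2,\nu)$ reduces the task to the two configurations $(\mu,\nu)=(\ol{\c_1},\ol{\b})$ and $(\mu,\nu)=(\e,\ol{\b}\c_1)$ in which the first factor is degenerate; there one verifies that the part-(ii) correction of the first factor times the surviving principal value of the second, together with the Saalsch\"utz $\d$-term times the prefactor, add up to exactly $-\tfrac{(1-q)^2}{q}\d(\c_1\mu)\d(\b\nu)C_1$, which in each case collapses to a one-line Gauss-sum comparison (for instance, in the second configuration it amounts to the equality of the second factor's principal value with the prefactor evaluated at $\mu=\e$). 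The main obstacle is precisely this bookkeeping of the $\d$-corrections: the delicate point is the boundary case $\c_1=\e$ (and symmetrically $\c_2=\e$), where the two degenerate configurations coalesce into one and a Vandermonde correction and the Saalsch\"utz correction are simultaneously active; this is exactly what the factor $q^{\d(\c_1)}$ (resp. $q^{\d(\c_2)}$) in the definition of $C_1$ (resp. $C_2$) absorbs.
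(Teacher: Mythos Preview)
Your proposal is correct and follows essentially the same route as the paper: apply Proposition~\ref{Vandermonde} to each ${}_2F_1(1)$ on the left, apply Proposition~\ref{Saalschutz theorem} to the ${}_3F_2(1)$ on the right, match the generic main terms and the Gauss-sum term with the Jacobi piece, and then bookkeep the finitely many degenerate $(\mu,\nu)$ where the $\d$-corrections intervene, including the coalescence at $\c_1=\e$ (resp.\ $\c_2=\e$) that is absorbed by $q^{\d(\c_i)}$ in $C_i$. The paper organizes this by introducing auxiliary quantities $L,M,N$ and showing $L=M=N$ generically, but the content is the same.
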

\begin{proof}
Put 
\begin{equation*}
L(\mu,\nu)=\hF{2}{1}{\b\nu,\ol{\mu}}{\c_1}{1}\hF{2}{1}{\a\mu,\ol{\nu}}{\c_2}{1},
\end{equation*}
and 
\begin{equation*}
M(\mu,\nu)=q^{-\d(\b\ol{\c_1}\nu)-\d(\a\ol{\c_2}\mu)}\dfrac{(\ol{\b}\c_1\ol{\nu})_\mu(\ol{\a}\c_2\ol{\mu})_\nu}{(\c_1)_\mu^\circ(\c_2)_\nu^\circ}.
\end{equation*}
First, if $\{\b\nu,\ol{\mu}\}\neq\{\e,\c_1\}$ and $\{\a\mu,\ol{\nu}\}\neq\{\e,\c_2\}$, then by Proposition \ref{Vandermonde} (i), we have
\begin{equation*}
L(\mu,\nu)=M(\mu,\nu).
\end{equation*}
Using Proposition \ref{Saalschutz theorem} (note that $\{\a\b\ol{\c_1\c_2},\ol{\mu},\ol{\nu}\}\neq\{\e,\b\ol{\c_1\mu},\a\ol{\c_2\nu}\}$), we have
\begin{align*}
M(\mu,\nu)&=\dfrac{(\ol{\b}\c_1)_\mu(\ol{\a}\c_2)_\nu}{(\c_1)_\mu^\circ(\c_2)_\nu^\circ}\left(\hF{3}{2}{\a\b\ol{\c_1\c_2},\ol{\mu},\ol{\nu}}{\b\ol{\c_1\mu},\a\ol{\c_2\nu}}{1}-\dfrac{g^\circ(\b\ol{\c_1\mu})g^\circ(\a\ol{\c_2\nu})}{g(\a\b\ol{\c_1\c_2})g(\ol{\mu})g(\ol{\nu})}\right)\\
&=N(\mu,\nu),
\end{align*}
where
\begin{equation*}
N(\mu,\nu):=\dfrac{(\ol{\b}\c_1)_\mu(\ol{\a}\c_2)_\nu}{(\c_1)_\mu^\circ(\c_2)_\nu^\circ}\hF{3}{2}{\a\b\ol{\c_1\c_2},\ol{\mu},\ol{\nu}}{\b\ol{\c_1\mu},\a\ol{\c_2\nu}}{1}-j(\a\ol{\c_2},\b\ol{\c_1})\dfrac{(\e)_\mu^\circ(\e)_\nu^\circ}{(\c_1)_\mu^\circ(\c_2)_\nu^\circ}.
\end{equation*}
Therefore, we obtain the formula of the lemma.

Next, if $\{\b\nu,\ol{\mu}\}=\{\e,\c_1\}$ (then $\{\a\mu,\ol{\nu}\}\neq\{\e,\c_2\}$), then by Proposition \ref{Vandermonde} (ii), 
\begin{equation}\label{L=M+R}
L(\mu,\nu)=M(\mu,\nu)-\dfrac{(1-q)^2(1+q)^{\d(\c_1)}}{q}q^{-\d(\ol{\a}\c_2\ol{\mu})}\dfrac{(\ol{\a}\c_2\ol{\mu})_\nu}{(\c_2)_\nu^\circ}.
\end{equation}
 By Proposition \ref{Saalschutz theorem}, if $\ol{\mu}=\e$ and $\b\nu=\c_1$, then
\begin{equation*}
M(\mu,\nu)=N(\mu,\nu)+\dfrac{(1-q)^2}{q}\dfrac{(\ol{\a}\c_2)_{\ol{\b}\c_1}}{(\c_2)_{\ol{\b}\c_1}^\circ},
\end{equation*}
 and if $\ol{\mu}=\c_1\neq\e$ and $\b\nu=\e$ (then $\{\a\b\ol{\c_1\c_2},\ol{\mu},\ol{\nu}\}\neq\{\e,\b\ol{\c_1\mu},\a\ol{\c_2\nu}\}$), then 
\begin{equation*}
M(\mu,\nu)=N(\mu,\nu).
\end{equation*}
Consequently,  by (\ref{L=M+R}), we have
\begin{equation*}
L(\mu,\nu)=N(\mu,\nu)-\d(\c_1\mu)\d(\b\nu)\dfrac{(1-q)^2}{q}C_1.
\end{equation*}
Similarly, if $\{\a\mu,\ol{\nu}\}=\{\e,\c_2\}$, then we have 
\begin{equation*}
L(\mu,\nu)=N(\mu,\nu)-\d(\a\mu)\d(\c_2\nu)\dfrac{(1-q)^2}{q}C_2.
\end{equation*}
Thus, we obtain the lemma.
\end{proof}

%%%%%%%F4 integral rep%%%%%%%%%%%%%%%%%%%%%%%%%%%%%%%%
For brevity, put 
$$J:=j(\a,\ol{\a}\c_1)j(\b,\ol{\b}\c_2).$$

\begin{prop}\label{key prop}
Suppose that $\a,\b\not\in\{\e,\c_1,\c_2\}$ and $\a\b\ol{\c_1\c_2}\neq\e$. 
Then, for any $x,y\in\k^\times\setminus\{1\}$, 
\begin{align*}
J\cdot F_4&\left(\a;\b;\c_1,\c_2;x(1-y),y(1-x)\right)\\
=&\ol{\a}(1-x)\ol{\b}(1-y)\dfrac{J}{1-q}\sum_{\eta\in\khat}\dfrac{(\a)_\eta(\b)_\eta(\a\b\ol{\c_1\c_2})_\eta}{(\e)_\eta^\circ(\c_1)_\eta^\circ(\c_2)_\eta^\circ}\eta\Big(\dfrac{xy}{(x-1)(y-1)}\Big)\\ 
&\hspace{100pt}\times\hF{2}{1}{\a\eta,\ol{\b}\c_1}{\c_1\eta}{\dfrac{x}{x-1}}\hF{2}{1}{\b\eta,\ol{\a}\c_2}{\c_2\eta}{\dfrac{y}{y-1}}\\
&-S_0(x,y)-S_1(x,y)-S_2(x,y),
\end{align*}
where
\begin{align*}
S_0(x,y)&:=\a\b(-1)j(\a\ol{\c_2},\b\ol{\c_1})\ol{\c_1}(x)\ol{\c_2}(y),\\
S_1(x,y)&:=j(\ol{\a\b}\c_1\c_2,\b)\ol{\c_1}(x)\ol{\a}\c_1(x-1)\ol{\b}(y),\\
S_2(x,y)&:=j(\ol{\a\b}\c_1\c_2,\a)\ol{\c_2}(y)\ol{\b}\c_2(y-1)\ol{\a}(x).
\end{align*}
\end{prop}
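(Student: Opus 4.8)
The plan is to feed Lemma \ref{F4=F(1)F(1)} and Lemma \ref{2F12F1=3F2} into each other after a M\"obius substitution, and then to pin down the three correction terms by Gauss-sum computations. First I would look for $X,Y$ with $\frac{-X}{(1-X)(1-Y)}=x(1-y)$ and $\frac{-Y}{(1-X)(1-Y)}=y(1-x)$: setting $D=(1-X)(1-Y)$, the compatibility equation for $D$ factors as $((1-x)(1-y)D-1)(xyD-1)=0$, and the root $D=1/((1-x)(1-y))$ gives $X=\frac{x}{x-1}$, $Y=\frac{y}{y-1}$. Put $x'=\frac{x}{x-1}$, $y'=\frac{y}{y-1}$, so that $1-x'=\frac1{1-x}$ and hence $\ol{\a}(1-x')=\a(1-x)$. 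Substituting $X=x'$, $Y=y'$ into Lemma \ref{F4=F(1)F(1)} (legitimate since $x',y'\neq1$ and $x,y\neq1$) gives
\[
F_4(\a;\b;\c_1,\c_2;x(1-y),y(1-x))=\ol{\a}(1-x)\ol{\b}(1-y)\,P,
\]
where $P=\frac1{(1-q)^2}\sum_{\mu,\nu}\frac{(\a)_\mu(\b)_\nu}{(\e)_\mu^\circ(\e)_\nu^\circ}\hFF{\b\nu,\ol{\mu}}{\c_1}{1}\hFF{\a\mu,\ol{\nu}}{\c_2}{1}\,\mu(x')\nu(y')$.

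Since the hypotheses on $\a,\b,\c_1,\c_2$ are exactly those of Lemma \ref{2F12F1=3F2}, I substitute its expansion of the product $\hFF{\b\nu,\ol{\mu}}{\c_1}{1}\hFF{\a\mu,\ol{\nu}}{\c_2}{1}$ and write $P=P_A+P_B+P_C$, where $P_A$ gathers the ${}_3F_2(1)$-term, $P_B$ the $-j(\a\ol{\c_2},\b\ol{\c_1})\frac{(\e)_\mu^\circ(\e)_\nu^\circ}{(\c_1)_\mu^\circ(\c_2)_\nu^\circ}$-term, and $P_C$ the two $\d$-terms with constants $C_1,C_2$. For $P_A$ I expand $\hF{3}{2}{\a\b\ol{\c_1\c_2},\ol{\mu},\ol{\nu}}{\b\ol{\c_1\mu},\a\ol{\c_2\nu}}{1}$ as a sum over $\eta\in\khat$ and reindex by $\mu\mapsto\eta\mu$, $\nu\mapsto\eta\nu$. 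By \eqref{Poch formula} (for instance $(\a\eta)_{\ol{\eta}\mu}=(\a)_\mu/(\a)_\eta$ and $(\c_1\eta)_{\ol{\eta}\mu}^\circ=(\c_1)_\mu^\circ/(\c_1)_\eta^\circ$) the $\eta$-dependence of the numerator coefficients disentangles, and then \eqref{Poch formula 2} — in the form $(\ol{\b}\c_1\mu)_{\ol{\eta}}(\b\ol{\c_1\mu})_\eta^\circ=\eta(-1)=(\ol{\mu})_\eta(\mu)_{\ol{\eta}}^\circ$, and likewise with $\a,\c_2,\nu$ — makes the remaining cross-factors cancel. The triple sum collapses to
\[
P_A=\frac1{1-q}\sum_{\eta}\frac{(\a)_\eta(\b)_\eta(\a\b\ol{\c_1\c_2})_\eta}{(\e)_\eta^\circ(\c_1)_\eta^\circ(\c_2)_\eta^\circ}\,\eta(x'y')\,\hFF{\a\eta,\ol{\b}\c_1}{\c_1\eta}{x'}\hFF{\b\eta,\ol{\a}\c_2}{\c_2\eta}{y'},
\]
so $\ol{\a}(1-x)\ol{\b}(1-y)\,J\,P_A$ is exactly the first term on the right-hand side of the proposition, since $x'y'=\frac{xy}{(x-1)(y-1)}$.

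It remains to identify $P_B$ and $P_C$. The double sum in $P_B$ factors, and the single sum $\frac1{1-q}\sum_\mu\frac{(\a)_\mu}{(\c_1)_\mu^\circ}\mu(x')$ I would evaluate by rewriting $\frac{(\a)_\mu}{(\c_1)_\mu^\circ}=\mu(-1)(\a)_\mu(\ol{\c_1})_{\ol{\mu}}$ via \eqref{Poch formula 2}, expanding the two Gauss sums and summing over $\mu$; since $\a\neq\c_1$ the resulting double character sum reduces to a single Gauss sum and equals $\frac{g^\circ(\c_1)g(\a\ol{\c_1})}{q\,g(\a)}\ol{\c_1}(x')\,\ol{\a}\c_1(1-x')$, and similarly for the $\b,\c_2$-sum. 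Multiplying $P_B$ by $\ol{\a}(1-x)\ol{\b}(1-y)\,J$, expressing $J$ through \eqref{J=G} and simplifying the Gauss sums with \eqref{Gauss sum thm} and $g(\eta)g(\ol{\eta})=\eta(-1)q$ (for $\eta\neq\e$), everything cancels down to $-S_0(x,y)$. For $P_C$ the two $\d$-pairs force $(\mu,\nu)=(\ol{\c_1},\ol{\b})$ in the first summand and $(\mu,\nu)=(\ol{\a},\ol{\c_2})$ in the second; evaluating the Pochhammer symbols at these characters, substituting the definitions of $C_1,C_2$, and running the same Gauss-sum simplification yields $-S_1(x,y)$ and $-S_2(x,y)$ respectively. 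Adding $\ol{\a}(1-x)\ol{\b}(1-y)\,J\,(P_A+P_B+P_C)$ then gives the stated identity.

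The main obstacle is computational rather than conceptual. The $P_A$ part is essentially formal once one spots the substitution $X=\frac{x}{x-1}$ and the reindexing $\mu\mapsto\eta\mu$, with \eqref{Poch formula} and \eqref{Poch formula 2} doing all the work. The delicate points are the reductions of $P_B$ and $P_C$: one must keep careful track of the sign characters $\a(-1),\b(-1),\c_1(-1),\c_2(-1)$ and of the interplay between $g$, $g^\circ$ and $\d(\cdot)$, in order to land on the precise normalizations appearing in $S_0,S_1,S_2$ and in the constants $C_1,C_2$.
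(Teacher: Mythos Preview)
Your proposal is correct and follows essentially the same route as the paper: the same M\"obius substitution $X=x/(x-1)$, $Y=y/(y-1)$ into Lemma~\ref{F4=F(1)F(1)}, followed by Lemma~\ref{2F12F1=3F2} to split into the three pieces $P_A,P_B,P_C$, and the same reindexing $\mu\mapsto\eta\mu$, $\nu\mapsto\eta\nu$ to collapse $P_A$ into the displayed $\eta$-sum. The only cosmetic difference is in how the closed forms for $P_B$ and $P_C$ are extracted: the paper shifts $\mu\mapsto\ol{\c_1}\mu$, $\nu\mapsto\ol{\c_2}\nu$ and invokes the ${}_1F_0$ evaluation of Proposition~\ref{integral rep nFn-1}(i), whereas you unfold the Gauss sums directly; both lead to the same $S_0,S_1,S_2$.
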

\begin{proof}
By Lemma \ref{F4=F(1)F(1)} (replace $x, y$ with $x/(x-1), y/(y-1)$ respectively),
\begin{align*}
&J\cdot\a(1-x)\b(1-y)F_4\left(\a;\b;\c_1,\c_2;x(1-y),y(1-x)\right)\\
&=\dfrac{J}{(1-q)^2}\sum_{\mu,\nu}\dfrac{(\a)_\mu(\b)_\nu}{(\e)_\mu^\circ(\e)_\nu^\circ}\hF{2}{1}{\b\nu,\ol{\mu}}{\c_1}{1}\hF{2}{1}{\a\mu,\ol{\nu}}{\c_2}{1}\mu\Big(\dfrac{x}{x-1}\Big)\nu\Big(\dfrac{y}{y-1}\Big).
\end{align*}
Thus, by Lemma \ref{2F12F1=3F2}, we obtain that
\begin{align}
&J\cdot\a(1-x)\b(1-y)F_4\left(\a;\b;\c_1,\c_2;x(1-y),y(1-x)\right)\label{Phi=Phi'}\\
&=\Phi(x,y)-\a(1-x)\b(1-y)\big(S_0(x,y)+S_1(x,y)+S_2(x,y)\big),\nonumber
\end{align}
where
\begin{align*}
&\Phi(x,y)\\
&:=\dfrac{J}{(1-q)^3}\sum_{\mu,\nu,\eta}\dfrac{(\a)_\mu(\b)_\nu(\ol{\b}\c_1)_\mu(\ol{\a}\c_2)_\nu(\a\b\ol{\c_1\c_2})_\eta(\ol{\mu})_\eta(\ol{\nu})_\eta}{(\e)_\mu^\circ(\e)_\nu^\circ(\c_1)_\mu^\circ(\c_2)_\nu^\circ(\e)_\eta^\circ(\b\ol{\c_1\mu})_\eta^\circ(\a\ol{\c_2\nu})_\eta^\circ}\mu\Big(\dfrac{x}{x-1}\Big)\nu\Big(\dfrac{y}{y-1}\Big).
\end{align*}
In \eqref{Phi=Phi'}, note that, by replacing $\mu$, $\nu$ with $\ol{\c_1}\mu$, $\ol{\c_2}\nu$ respectively and using \eqref{Poch formula} and Proposition \ref{integral rep nFn-1} (i),
\begin{align*}
&\dfrac{J}{(1-q)^2}j(\a\ol{\c_2},\b\ol{\c_1})\sum_{\mu,\nu}\dfrac{(\a)_\mu(\b)_\nu}{(\c_1)_\mu^\circ(\c_2)_\nu^\circ}\mu\Big(\dfrac{x}{x-1}\Big)\nu\Big(\dfrac{y}{y-1}\Big)\\
&=\a\b(-1)j(\a\ol{\c_2},\b\ol{\c_1})\ol{\c_1}\Big(\dfrac{x}{x-1}\Big)\ol{\c_2}\Big(\dfrac{y}{y-1}\Big)\hF{1}{0}{\a\ol{\c_1}}{}{\dfrac{x}{x-1}}\hF{1}{0}{\b\ol{\c_2}}{}{\dfrac{y}{y-1}}\\
&=\a(1-x)\b(1-y)S_0(x,y).
\end{align*}

By \eqref{Gauss sum thm}, for any $\p, \chi\in\widehat{\k^\times}$,
\begin{equation}\label{Poch formula 3}
\dfrac{(\chi)_\p}{(\ol{\chi\p})_\p^\circ}=\p(-1).
\end{equation}
Replacing $\mu$, $\nu$ with $\mu\eta$, $\nu\eta$ respectively, and using \eqref{Poch formula} and \eqref{Poch formula 3}, we have
\begin{align*}
&\dfrac{(1-q)^3}{J}\Phi(x,y)\\
%&=\sum_{\eta,\mu,\nu}\dfrac{(\a)_{\mu\eta}(\b)_{\nu\eta}(\ol{\b}\c_1)_{\mu\eta}(\ol{\a}\c_2)_{\nu\eta}(\a\b\ol{\c_1\c_2})_\eta(\ol{\mu\eta})_\eta(\ol{\nu\eta})_\eta}{(\e)_{\mu\eta}^\circ(\e)_{\nu\eta}^\circ(\c_1)_{\mu\eta}^\circ(\c_2)_{\nu\eta}^\circ(\e)_{\eta}^\circ(\b\ol{\c_1\mu\eta})_\eta^\circ(\a\ol{\c_2\nu\eta})_\eta^\circ}\eta(xy)\mu(x)\nu(y)\\
&=\sum_{\eta,\mu,\nu}\dfrac{(\a)_\eta(\b)_\eta(\a\b\ol{\c_1\c_2})_\eta(\a\eta)_\mu(\ol{\b}\c_1)_\mu(\b\eta)_\nu(\ol{\a}\c_2)_\nu}{(\e)_\eta^\circ(\c_1)_\eta^\circ(\c_2)_\eta^\circ(\e)_\mu^\circ(\c_1\eta)_\mu^\circ(\e)_\nu^\circ(\c_2\eta)_\nu^\circ}\mu\eta\Big(\dfrac{x}{x-1}\Big)\nu\eta\Big(\dfrac{y}{y-1}\Big),
\end{align*}
and hence we have
\begin{align*}
\Phi(x,y)&=\dfrac{J}{1-q}\sum_\eta\dfrac{(\a)_\eta(\b)_\eta(\a\b\ol{\c_1\c_2})_\eta}{(\e)_\eta^\circ(\c_1)_\eta^\circ(\c_2)_\eta^\circ}\eta\Big(\dfrac{xy}{(x-1)(y-1)}\Big)\\ 
&\hspace{60pt}\times\hF{2}{1}{\a\eta,\ol{\b}\c_1}{\c_1\eta}{\dfrac{x}{x-1}}\hF{2}{1}{\b\eta,\ol{\a}\c_2}{\c_2\eta}{\dfrac{y}{y-1}}.
\end{align*}
Thus, the proposition follows from \eqref{Phi=Phi'}.
\end{proof}

By Proposition \ref{key prop}, we obtain a finite analogue of the Burchnall-Chaundy expansion \eqref{B-C exp}, under the assumption  $\a\b\ol{\c_1\c_2}\neq\e$, as follows.
\begin{thm}\label{F4=F(x)F(y)} 
Suppose that $\a,\b\not\in\{\e,\c_1,\c_2\}$ and $\a\b\ol{\c_1\c_2}\neq\e$. Then, for any $x,y\in\k\backslash\{1\}$, we have
\begin{align*}
J\cdot F_4&(\a;\b;\c_1,\c_2;x(1-y),y(1-x))\\
=&\dfrac{J}{1-q}\sum_{\eta\in\widehat{\k^\times}}\dfrac{(\a)_\eta(\b)_\eta(\a\b\ol{\c_1\c_2})_\eta}{(\e)_\eta^\circ(\c_1)_\eta^\circ(\c_2)_\eta^\circ}\eta(xy)\hF{2}{1}{\a\eta,\b\eta}{\c_1\eta}{x}\hF{2}{1}{\a\eta,\b\eta}{\c_2\eta}{y}\\
&-S_0(x,y)+R_1(x,y)+q^{\d(\a\ol{\b})}R_2(x,y).
\end{align*}
Here, $J$ and $S_0$ are as in Proposition \ref{key prop} and
\begin{align*}
R_1(x,y)&:=j(\ol{\a\b}\c_1\c_2,\b)j(\a\ol{\c_2},\ol{\b}\c_2)\ol{\a}\c_1(x-1)\ol{\a}\c_2(1-y)\ol{\c_1}(x)\ol{\c_2}(y),\\
R_2(x,y)&:=j(\ol{\a\b}\c_1\c_2,\a)j(\ol{\a}\c_1,\b\ol{\c_1})\ol{\b}\c_1(1-x)\ol{\b}\c_2(y-1)\ol{\c_1}(x)\ol{\c_2}(y).
\end{align*}
\end{thm}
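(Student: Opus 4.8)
The plan is to derive the identity from Proposition \ref{key prop} by applying the finite Pfaff transformation to each of its two Gauss ${}_2F_1$-factors. First I would dispose of the degenerate cases: if $x=0$ or $y=0$, both sides vanish by the convention $\eta(0)=0$, so I may assume $x,y\in\k^\times\setminus\{1\}$ and invoke Proposition \ref{key prop}. The common coefficient $(\a)_\eta(\b)_\eta(\a\b\ol{\c_1\c_2})_\eta/\big((\e)_\eta^\circ(\c_1)_\eta^\circ(\c_2)_\eta^\circ\big)$ and the scalar $S_0$ already agree with the statement, so the task is to transform the $\eta$-summand of Proposition \ref{key prop}, which carries $\ol{\a}(1-x)\ol{\b}(1-y)\,\eta\big(xy/((x-1)(y-1))\big)\hF{2}{1}{\a\eta,\ol{\b}\c_1}{\c_1\eta}{x/(x-1)}\hF{2}{1}{\b\eta,\ol{\a}\c_2}{\c_2\eta}{y/(y-1)}$, into the $\eta$-summand of the theorem, which carries $\eta(xy)\hF{2}{1}{\a\eta,\b\eta}{\c_1\eta}{x}\hF{2}{1}{\a\eta,\b\eta}{\c_2\eta}{y}$, pushing the difference into $R_1$ and $q^{\d(\a\ol{\b})}R_2$.

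The engine is Proposition \ref{Kummer 24}. With parameters $(\a\eta,\b\eta,\c_1\eta)$ it gives $\hF{2}{1}{\a\eta,\ol{\b}\c_1}{\c_1\eta}{x/(x-1)}=\a\eta(1-x)\hF{2}{1}{\a\eta,\b\eta}{\c_1\eta}{x}$ whenever $\b\eta\neq\e$ (here $\a\neq\c_1$), the $\d$-correction term being absent since $\ol{\b}\c_1\neq\e$. With parameters $(\b\eta,\a\eta,\c_2\eta)$ it gives $\hF{2}{1}{\b\eta,\ol{\a}\c_2}{\c_2\eta}{y/(y-1)}=\b\eta(1-y)\hF{2}{1}{\a\eta,\b\eta}{\c_2\eta}{y}$ whenever $\a\eta\neq\e$ (here $\b\neq\c_2$), again with no correction since $\ol{\a}\c_2\neq\e$. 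Multiplying and using $(1-x)/(x-1)=(1-y)/(y-1)=-1$, which makes $\eta(1-x)\eta(1-y)\eta\big(xy/((x-1)(y-1))\big)=\eta(xy)$, I get that the $\eta$-summand of Proposition \ref{key prop} coincides with the $\eta$-summand of the theorem for every $\eta\notin\{\ol{\a},\ol{\b}\}$. Hence the discrepancy of the two $\eta$-sums is concentrated on $\eta\in\{\ol{\a},\ol{\b}\}$.

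It then remains to evaluate these one or two exceptional terms by hand. At $\eta=\ol{\b}$ one has $\b\eta=\e$, so the theorem's factors become $\hF{2}{1}{\a\ol{\b},\e}{\ol{\b}\c_1}{x}$ and $\hF{2}{1}{\a\ol{\b},\e}{\ol{\b}\c_2}{y}$, which Lemma \ref{2F1=1F0} evaluates (the degenerate sub-case there does not arise since $x,y\neq1$); on the side of Proposition \ref{key prop} the $x$-factor $\hF{2}{1}{\a\ol{\b},\ol{\b}\c_1}{\ol{\b}\c_1}{x/(x-1)}$ has lower parameter equal to its second upper one, and is reduced by Proposition \ref{red. for.} together with Proposition \ref{integral rep nFn-1} (i), while its $y$-factor is still reached by the Pfaff identity above. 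The evaluation at $\eta=\ol{\a}$ is the mirror image, with $x,\c_1,\a$ and $y,\c_2,\b$ interchanged. When $\a\neq\b$ these are two distinct terms, but when $\a=\b$ they collapse into a single, doubly degenerate term in which both factors on the side of Proposition \ref{key prop} have the reducible shape; this collapse is exactly what produces the factor $q^{\d(\a\ol{\b})}$ multiplying $R_2$.

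Finally I would reassemble: rewriting the emerging coefficients as products of Jacobi sums via \eqref{Gauss sum thm}, \eqref{J=G}, \eqref{Poch formula} and \eqref{Poch formula 2}, one checks that the total discrepancy of the two $\eta$-sums equals $S_1(x,y)+S_2(x,y)+R_1(x,y)+q^{\d(\a\ol{\b})}R_2(x,y)$, so that upon substitution into Proposition \ref{key prop} the terms $-S_1-S_2$ cancel and precisely $-S_0+R_1+q^{\d(\a\ol{\b})}R_2$ remains. I expect the main obstacle to be this last bookkeeping step: forcing the exceptional contributions into the normalized shape of $R_1$ and $R_2$ — each being a product of two Jacobi sums times a prescribed monomial in $x,1-x,x-1$ and in $y,1-y,y-1$ — and tracking the $\a=\b$ coincidence correctly, where a slip would alter the power of $q$.
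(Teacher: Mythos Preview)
Your proposal is correct and follows essentially the same route as the paper: start from Proposition~\ref{key prop}, apply the finite Pfaff transformation (Proposition~\ref{Kummer 24}) to each ${}_2F_1$-factor, and clean up the exceptional contributions at $\eta\in\{\ol{\a},\ol{\b}\}$ via Lemma~\ref{2F1=1F0}. The only organizational difference is that the paper applies Proposition~\ref{Kummer 24} in the reverse direction (with $\lambda=x/(x-1)$, $\lambda=y/(y-1)$), so the exceptional terms appear automatically as the $\d(\b\eta)$- and $\d(\a\eta)$-corrections and no separate appeal to Proposition~\ref{red. for.} is needed; your case-by-case treatment of $\eta=\ol{\a},\ol{\b}$ reaches the same endpoint.
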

\begin{proof}
Using Proposition \ref{Kummer 24} with $\lambda=x/(x-1)$ and $\lambda=y/(y-1)$, we have
\begin{align*}
&\hF{2}{1}{\a\eta,\ol{\b}\c_1}{\c_1\eta}{\dfrac{x}{x-1}}\hF{2}{1}{\b\eta,\ol{\a}\c_2}{\c_2\eta}{\frac{y}{y-1}}\\
=&\left(\a\eta(1-x)\hF{2}{1}{\a\eta,\b\eta}{\c_1\eta}{x}+\d(\b\eta)\a\b(-1)(1-q)\dfrac{g^\circ(\ol{\b}\c_1)}{g(\a\ol{\b})(\ol{\a}\c_1)}\b\ol{\c_1}\Big(\frac{x}{x-1}\Big)\right)\\
&\times\left( \b\eta(1-y)\hF{2}{1}{\a\eta,\b\eta}{\c_2\eta}{y}+\d(\a\eta)\a\b(-1)(1-q)\dfrac{g^\circ(\ol{\a}\c_2)}{g(\ol{\a}\b)g(\ol{\b}\c_2)}\a\ol{\c_2}\Big(\frac{y}{y-1}\Big)\right)\\
=&\a\eta(1-x)\b\eta(1-y)\hF{2}{1}{\a\eta,\b\eta}{\c_1\eta}{x}\hF{2}{1}{\a\eta,\b\eta}{\c_2\eta}{y}\\
&+\a\b(-1)(1-q)\dfrac{g^\circ(\ol{\a}\c_2)}{g(\ol{\a}\b)g(\ol{\b}\c_2)}\a\ol{\c_2}\Big(\frac{y}{y-1}\Big)\hF{2}{1}{\e,\ol{\a}\b}{\ol{\a}\c_1}{x}\\
&+\a\b(-1)(1-q)\dfrac{g^\circ(\ol{\b}\c_1)}{g(\a\ol{\b})(\ol{\a}\c_1)}\b\ol{\c_1}\Big(\frac{x}{x-1}\Big)\hF{2}{1}{\a\ol{\b},\e}{\ol{\b}\c_2}{y}\\
&+\d(\a\ol{\b})(1-q)^2\ol{\a\c_1}\Big(\frac{x}{x-1}\Big)\a\ol{\c_2}\Big(\frac{y}{y-1}\Big).
\end{align*}
Thus, we obtain the theorem by Proposition \ref{key prop}, Lemma \ref{2F1=1F0}, \eqref{Gauss sum thm} and \eqref{J=G}.
\end{proof}

A finite analogue of \eqref{F4 int} is the following.
\begin{thm}\label{F4 int ana}
Suppose that $\a,\b\not\in \{\e,\c_1,\c_2\}$. Then, for any $x,y\in\k^\times\backslash\{1\}$, 
\begin{align*}
J\cdot F_4&(\a;\b;\c_1,\c_2;x(1-y),y(1-x))\\
&=\sum_{u,v\in\k^\times}\a(u)\b(v)\ol{\a}\c_1(1-u)\ol{\b}\c_2(1-v)\\
&\hspace{30pt}\times \a\ol{\c_1\c_2}(1-xu)\b\ol{\c_1\c_2}(1-yv)\ol{\a\b}\c_1\c_2(1-xu-yv)\\
&\hspace{10pt}-S_0(x,y)-S_1(x,y)-S_2(x,y).
\end{align*}
Here, $J$ and $S_i(x,y)$ $(i=0,1,2)$ are as in Proposition \ref{key prop}.
\end{thm}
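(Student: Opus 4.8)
The plan is to imitate the classical derivation of \eqref{F4 int} from \eqref{B-C exp}: substitute the Euler-type integral representation of the Gauss function (Proposition \ref{integral rep nFn-1}(ii)) into each of the two $\hF{2}{1}$-factors of the finite Burchnall-Chaundy expansion (Theorem \ref{F4=F(x)F(y)}), interchange the order of summation, and collapse the resulting geometric series by the binomial evaluation $\hF{1}{0}{\a\b\ol{\c_1\c_2}}{}{\lambda}=\ol{\a\b}\c_1\c_2(1-\lambda)$ of Proposition \ref{integral rep nFn-1}(i). I would treat the generic case $\a\b\ol{\c_1\c_2}\neq\e$ first, since there Theorem \ref{F4=F(x)F(y)} is available. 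Concretely, apply Proposition \ref{integral rep nFn-1}(ii) to $\hF{2}{1}{\a\eta,\b\eta}{\c_1\eta}{x}$ with the integration variable attached to the numerator character $\a\eta$ (legitimate since $\a\neq\c_1$) and to $\hF{2}{1}{\a\eta,\b\eta}{\c_2\eta}{y}$ with it attached to $\b\eta$ (since $\b\neq\c_2$); the two leftover correction terms then carry $\d(\b\eta)$ and $\d(\a\eta)$, hence contribute only at $\eta=\ol{\b}$ and $\eta=\ol{\a}$. Using \eqref{J=G} in the form $j(\a\eta,\ol{\a}\c_1)\,j(\b\eta,\ol{\b}\c_2)=\frac{(\a)_\eta(\b)_\eta}{(\c_1)_\eta^\circ(\c_2)_\eta^\circ}\,J$, the factors $(\a)_\eta(\b)_\eta/\bigl((\c_1)_\eta^\circ(\c_2)_\eta^\circ\bigr)$ in the main sum of Theorem \ref{F4=F(x)F(y)} are absorbed, leaving $\frac{1}{1-q}\sum_\eta\frac{(\a\b\ol{\c_1\c_2})_\eta}{(\e)_\eta^\circ}\,\eta(xy)$ times the product of the two integral expansions.

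Expanding that product into four pieces and interchanging with the sums over $u,v$, the ``main$\times$main'' piece is the crux. Collecting its $\eta$-dependence yields $\frac{1}{1-q}\sum_\eta\frac{(\a\b\ol{\c_1\c_2})_\eta}{(\e)_\eta^\circ}\,\eta\!\bigl(\frac{xyuv}{(1-xu)(1-yv)}\bigr)=\hF{1}{0}{\a\b\ol{\c_1\c_2}}{}{\frac{xyuv}{(1-xu)(1-yv)}}=\ol{\a\b}\c_1\c_2\!\bigl(\frac{1-xu-yv}{(1-xu)(1-yv)}\bigr)$ by Proposition \ref{integral rep nFn-1}(i) together with the elementary identity $1-\frac{xyuv}{(1-xu)(1-yv)}=\frac{1-xu-yv}{(1-xu)(1-yv)}$; multiplying the factored form back in converts $\ol{\b}(1-xu)$ into $\a\ol{\c_1\c_2}(1-xu)$ and $\ol{\a}(1-yv)$ into $\b\ol{\c_1\c_2}(1-yv)$, so this piece equals precisely the double sum in the statement.

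The remaining three pieces are the cross-terms, each a single $u$- or $v$-sum; I would evaluate these again with Proposition \ref{integral rep nFn-1}(ii) and Lemma \ref{2F1=1F0}, then simplify using \eqref{Gauss sum thm} and \eqref{J=G} (the identities $\frac{(\a\b\ol{\c_1\c_2})_{\ol{\a}}}{(\e)_{\ol{\a}}^\circ}=j(\ol{\a\b}\c_1\c_2,\a)$, its $\a\leftrightarrow\b$ analogue, and $\frac{g(\b\ol{\c_1})g(\ol{\a}\c_1)}{g(\b\ol{\a})}=q^{\d(\a\ol{\b})}j(\ol{\a}\c_1,\b\ol{\c_1})$ being the convenient ones). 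I expect the outcome to be $(\text{main}\times\d(\a\eta)\text{-term})+q^{\d(\a\ol{\b})}R_2=-S_2+E_2$ and, symmetrically, $(\d(\b\eta)\text{-term}\times\text{main})+R_1=-S_1+E_1$, where $E_1,E_2$ are explicit terms vanishing unless $\a=\b$, which then cancel against the fourth piece ($\d(\b\eta)$-term $\times$ $\d(\a\eta)$-term); adding the surviving $-S_0$ of Theorem \ref{F4=F(x)F(y)} gives $-S_0-S_1-S_2$ and completes the generic case.

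In the remaining case $\a\b\ol{\c_1\c_2}=\e$ (equivalently $\c_1\c_2=\a\b$), Theorem \ref{F4=F(x)F(y)} is no longer available; here I would carry out the same scheme by a direct computation --- going back to Lemma \ref{F4=F(1)F(1)} with $x\mapsto x/(x-1)$, $y\mapsto y/(y-1)$ and proceeding as in the proof of Proposition \ref{key prop}, except that the relevant $\hF{3}{2}{\e,\cdot}{\cdot}{1}$ now reduces to a $\hF{2}{1}$ by Proposition \ref{red. for.} --- the only genuinely new feature being the extra $(1-q)$-term that Proposition \ref{integral rep nFn-1}(i) assigns to $\hF{1}{0}{\e}{}{\lambda}$ at $\lambda=1$. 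That correction is supported on $\{xu+yv=1\}$, where the remaining characters telescope, and together with the now-trivial leading Jacobi sums in $S_0,S_1,S_2$ it reproduces the stated identity. I expect the main obstacle to be the cross-term bookkeeping in the generic case --- keeping track of the $\d$-corrections together with $R_1,R_2$ from Theorem \ref{F4=F(x)F(y)} and $S_1,S_2$ from the statement --- and, separately, the routine but fiddly degenerate case.
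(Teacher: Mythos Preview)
Your plan is viable and rests on the same core identity as the paper---substitute the Euler sum for each Gauss factor, swap the order of summation, and collapse the $\eta$-sum via $\hF{1}{0}{\a\b\ol{\c_1\c_2}}{}{\cdot}$---but the paper executes it more economically by starting one step earlier, at Proposition~\ref{key prop} rather than Theorem~\ref{F4=F(x)F(y)}. In Proposition~\ref{key prop} the two Gauss factors are $\hF{2}{1}{\a\eta,\ol{\b}\c_1}{\c_1\eta}{\tfrac{x}{x-1}}$ and $\hF{2}{1}{\b\eta,\ol{\a}\c_2}{\c_2\eta}{\tfrac{y}{y-1}}$; applying Proposition~\ref{integral rep nFn-1}(ii) with the integration attached to $\a\eta$ (resp.\ $\b\eta$) now produces \emph{no} $\d$-correction, since the remaining upper parameters $\ol{\b}\c_1$ and $\ol{\a}\c_2$ are nontrivial by hypothesis. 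A change of variables $u=s/(sx-x+1)$, $v=t/(ty-y+1)$ then converts the arguments $\tfrac{x}{x-1},\tfrac{y}{y-1}$ into $x,y$ and leads directly to the double sum, with $S_0,S_1,S_2$ already supplied by Proposition~\ref{key prop}. In other words, the paper absorbs the Pfaff step into the substitution, so the whole cross-term bookkeeping you anticipate (matching the $\d(\a\eta)$- and $\d(\b\eta)$-pieces against $R_1,R_2$) simply does not arise. Your route through Theorem~\ref{F4=F(x)F(y)} would work, but it first undoes and then redoes the Pfaff transformation.

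For the degenerate case $\a\b=\c_1\c_2$ the paper takes a different shortcut: it invokes the product formula of Tripathi--Barman \cite[Theorem~3.1]{TB2020} for $F_4$ (equation \eqref{B-T}), and separately evaluates the double sum in the statement by observing that when $\ol{\a\b}\c_1\c_2=\e$ the factor $\ol{\a\b}\c_1\c_2(1-xu-yv)$ is just $1-\d(1-xu-yv)$, splitting the sum into the product $J\cdot\hF{2}{1}{\a,\b}{\c_1}{x}\hF{2}{1}{\a,\b}{\c_2}{y}$ minus a one-dimensional sum along $xu+yv=1$ that is computed by the substitution $t=xu$. Your proposal to rederive this from Lemma~\ref{F4=F(1)F(1)} with the $\hF{3}{2}$ collapsed by Proposition~\ref{red. for.} is a legitimate self-contained alternative, but the paper's citation-plus-direct-computation is shorter.
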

\begin{proof}
First, suppose that $\a\b\ol{\c_1\c_2}=\e$. 
Then, we have a result  of  Tripathi-Barman \cite[Theorem 3.1]{TB2020} (see also \cite[Theorem 4.1]{Otsubo2})
\begin{align}
&J \cdot F_4(\a;\b;\c_1,\c_2;x(1-y),y(1-x)) \label{B-T}\\
&= J \cdot \hF{2}{1}{\a,\b}{\c_1}{x}\hF{2}{1}{\a,\b}{\c_2}{y} -\d(1-x-y)qS_0(x,y),\nonumber
\end{align}
where $\d(u)=0$ for $u\in\k^\times$ and $\d(0)=1$. 
On the other hand, by using Proposition \ref{integral rep nFn-1} (ii) and letting $t=ux$, the first term of the right-hand side of the theorem is
\begin{align*}
&\sum_{u,v}\a(u)\b(v)\ol{\a}\c_1(1-u)\ol{\b}\c_2(1-v)\a\ol{\c_1\c_2}(1-xu)\b\ol{\c_1\c_2}(1-yv)\e(1-xu-yv)\\
&=\sum_u\a(u)\ol{\a}\c_1(1-u)\ol{\b}(1-xu)\sum_v \b(v)\ol{\b}\c_2(1-v)\ol{\a}(1-yv)\\
&\hspace{30pt}-\sum_{\substack{u,v\\ux+vy=1}}\a(u)\b(v)\ol{\a}\c_1(1-u)\ol{\b}\c_2(1-v)\ol{\b}(1-xu)\a(1-yv)\\
&=J\cdot\hF{2}{1}{\a,\b}{\c_1}{x}\hF{2}{1}{\a,\b}{\c_2}{y}-\ol{\c_1}(x)\ol{\c_2}(y)\sum_{\substack{t\in\k\\t\neq0,1,1-y}} \ol{\a}\c_1\Big(\dfrac{x-t}{y-1+t}\Big).
\end{align*}
If $x+y\neq1$, then $(x-t)/(y-1+t)$ runs through $\k\setminus\{-1,x/(y-1), (x-1)/y\}$, and hence we have
\begin{align*}
\ol{\c_1}(x)\ol{\c_2}(y)\sum_{\substack{t\in\k\\t\neq0,1,1-y}} \ol{\a}\c_1\Big(\dfrac{x-t}{y-1+t}\Big)=-S_0(x,y)-S_1(x,y)-S_2(x,y).
\end{align*}
On the other hand, if $x+y=1$, then $S_0(x,y)=S_1(x,y)=S_2(x,y)$ and 
\begin{align*}
\ol{\c_1}(x)\ol{\c_2}(y)\sum_{\substack{t\in\k\\t\neq0,1,1-y}} \ol{\a}\c_1\Big(\dfrac{x-t}{y-1+t}\Big)=(q-3)S_0(x,y).
\end{align*}
Therefore, the right-hand side of the theorem is equal to the right-hand side of \eqref{B-T}, and hence we obtain the theorem.

Secondly, suppose that $\a\b\ol{\c_1\c_2}\neq\e$, and put
\begin{align*}
\Phi(x,y)&=\dfrac{J}{1-q}\sum_\eta\dfrac{(\a)_\eta(\b)_\eta(\a\b\ol{\c_1\c_2})_\eta}{(\c_1)_\eta^\circ(\c_2)_\eta^\circ(\e)_\eta^\circ}\eta\Big(\dfrac{xy}{(x-1)(y-1)}\Big)\\ 
&\hspace{60pt}\times\hF{2}{1}{\a\eta,\ol{\b}\c_1}{\c_1\eta}{\dfrac{x}{x-1}}\hF{2}{1}{\b\eta,\ol{\a}\c_2}{\c_2\eta}{\dfrac{y}{y-1}}.
\end{align*}
Then, by Proposition \ref{key prop}, the left-hand side of the theorem is equal to
\begin{align*}
\ol{\a}(1-x)\ol{\b}(1-y)\Phi(x,y) - S_0(x,y) - S_1(x,y) - S_2(x,y).
\end{align*}
By Proposition \ref{integral rep nFn-1} (ii) and letting $u=s/(sx-x+1)$ and $v=t/(ty-y+1)$, 
\begin{align*}
J&\dfrac{(\a)_\eta(\b)_\eta}{(\c_1)^\circ_\eta(\c_2)^\circ_\eta}\cdot \hF{2}{1}{\a\eta,\ol{\b}\c_1}{\c_1\eta}{\dfrac{x}{x-1}}\hF{2}{1}{\b\eta,\ol{\a}\c_2}{\c_2\eta}{\dfrac{y}{y-1}}\\
=&\sum_{s,t}\a(s)\ol{\a}\c_1(1-s)\b\ol{\c_1}\Big(1-\dfrac{xs}{x-1}\Big)\b(t)\ol{\b}\c_2(1-t)\a\ol{\c_2}\Big(1-\dfrac{yt}{y-1}\Big)\eta(st)\\
=&\a(1-x)\b(1-y)\sum_{u,v}\a(u)\b(v)\ol{\a}\c_1(1-u)\ol{\b}\c_2(1-v)\ol{\b}(1-xu)\ol{\a}(1-yv)\\
&\times\eta\Big(\dfrac{(x-1)(y-1)uv}{(1-xu)(1-yv)}\Big).
\end{align*}
Thus, we obtain, by Proposition \ref{integral rep nFn-1} (i), 
\begin{align*}
&\ol{\a}(1-x)\ol{\b}(1-y)\Phi(x,y)\\
&=\sum_{u,v}\a(u)\b(v)\ol{\a}\c_1(1-u)\ol{\b}\c_2(1-v)\ol{\b}(1-xu)\ol{\a}(1-yv)\\
&\quad\quad\quad\times\hF{1}{0}{\a\b\ol{\c_1\c_2}}{}{\dfrac{xyuv}{(1-xu)(1-yv)}}\\
&=\sum_{u,v}\a(u)\b(v)\ol{\a}\c_1(1-u)\ol{\b}\c_2(1-v)\\
&\quad\quad\quad\times\a\ol{\c_1\c_2}(1-xu)\b\ol{\c_1\c_2}(1-yv)\ol{\a\b}\c_1\c_2(1-xu-yv).
\end{align*}
Therefore, we obtain the theorem.
\end{proof}

%%%%%%%%%%%%%%%%%%%%%%%%%%%Numbers of rational points %%%%%%%%%%%%%%%%%%%%%%%%%%%%%
\section{The number of rational points on some algebraic varieties.}
 
\subsection{Rational points and Artin $L$-functions}
In this subsection, we recall the definitions of zeta functions and Artin $L$-functions of a variety and their properties. 
For more details, see \cite{serre} and \cite{weil}. 

Fix an algebraic closure $\ol{\k}$ of $\k$ and let $\k_r\subset \ol{\k}$ be the degree $r$ extension of $\k$.
Let $V$ be a variety over $\k$ and put $N_r(V) = \#V(\k_r)$. 
Then, {\it the zeta function of $V$} is defined by
\begin{equation*}\label{zeta} 
Z(V,t)=\exp\left(\sum_{r=1}^{\infty}\dfrac{N_r(V)}{r}t^r\right)\ \in \Q[[t]].
\end{equation*}

Let $G$ be a finite abelian group and suppose that $G$ acts on $V$ over $\k$. 
Let $F$ be the $q$-Frobenius acting on $V(\ol{\k})$. 
For $\chi\in \widehat{G}:={\rm Hom}(G,\ol{\Q}^\times)$ and $r\in\Z_{\geq1}$, put
\begin{align*}
N_r(V;\chi)&:=\dfrac{1}{\#G}\sum_{g\in G}\chi(g)\#\{x\in V(\ol{\k})\mid F^r(x)=g(x)\}\ \in\ol{\Q}.
\end{align*}
{\it The Artin $L$-function of $V$ associated to $\chi$} is defined by
\begin{equation*}\label{artinl}
L(V,\chi;t)=\exp\left(\sum_{r=1}^{\infty}\dfrac{N_r(V;\chi)}{r}t^r\right)\ \in \ol{\Q}[[t]].
\end{equation*}
Since 
$N_r(V)=\sum_{\chi\in\widehat{G}}N_r(V;\chi)$, 
we have
$Z(V,t)=\prod_{\chi \in \widehat{G}} L(V,\chi;t)$.

\begin{rem}
Let $D_\lambda$ be a diagonal hypersurface in $\P^{n-1}$ defined by the equation
$$X_1^d+\cdots+X_n^d=d\lambda X_1^{h_1}\cdots X_n^{h_n},$$
where $\lambda\in\k^\times$, $h_i\in\Z_{\geq1}$ and $\sum_i h_i=d$. 
A subquotient $G$ of $(\mu_d)^n$ acts on $D_\lambda$.
The author \cite{akio} expresses $N_r(D_\lambda;\chi)$ ($\chi \in \widehat{G}$) in terms of one-variable hypergeometric functions ${}_dF_{d-1}(\lambda^d)$ over $\k_r$.
\end{rem}

\subsection{Algebraic varieties related to $F_D$}
In this subsection, let $d,a,b_1,\dots,b_n,c$ be positive integers and let $\lambda_1,\dots,\lambda_n\in\k^\times$. 
Write $\lambda=(\lambda_1,\dots,\lambda_n)$.
We consider an affine curve $C_{D,\lambda}$ over $\k$ defined by the equation
\begin{align}
y^d=\Big(\prod_{i=1}^n (1-\lambda_ix)^{b_i}\Big)x^a(1-x)^c.\label{CD equ}
\end{align}
Without loss of generality, we assume that $\lambda_1,\dots,\lambda_n$ are not $1$ and distinct.
Suppose that $d\mid q-1$ and let $\mu_d\subset\k^\times$ be the subgroup consisting of all the $d$th roots of unity. 
Then, $\mu_d$ acts on $C_{D,\lambda}$ by $(x,y)\mapsto (x,\xi y)\ (\xi\in\mu_d)$. 
Fix a generator $\p$ of $\khat$, and put $\p_d=\p^{(q-1)/d}\in\khat$ and $\chi=\p|_{\mu_d}\in\widehat{\mu_d}$. 
Note that $\widehat{\mu_d}=\{\chi^m\mid m\in\Z/d\Z\}$. 

\begin{thm}\label{N of CD}
Suppose that $gcd(d,c)=gcd(d,b_i)=1$ for all $i$. 
Then,
\begin{align*}
N_1&(C_{D,\lambda};\chi^m)\\
&= \begin{cases}
q&(m=0),\vspace{5pt}\\
\displaystyle -j\big(\p_d^{ma},\p_d^{mc}\big)\FD{n}{\p_d^{ma};\ol{\p_d}^{mb_1},\dots,\ol{\p_d}^{mb_n}}{\p_d^{m(a+c)}}{\lambda_1,\dots,\lambda_n}&(m\neq0).
\end{cases}
\end{align*}
\end{thm}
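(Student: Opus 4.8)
The plan is to compute $N_1(C_{D,\lambda};\chi^m)$ directly from its definition by counting solutions of \eqref{CD equ} over $\k$ and sorting them according to the $\mu_d$-action, then recognizing the resulting character sum as the right-hand side of Theorem \ref{FD int ana} (i). First I would unwind the definition: for $r=1$, $N_1(C_{D,\lambda};\chi^m)=\frac1d\sum_{j\in\Z/d\Z}\chi^{mj}\,\#\{(x,y)\in C_{D,\lambda}(\ol\k)\mid F(x,y)=\xi^j(x,y)\}$ where $\xi$ generates $\mu_d$; since the action fixes $x$ and sends $y\mapsto\xi^j y$, the condition $F(x,y)=\xi^j(x,y)$ forces $x\in\k$ and $y^{q-1}=\xi^{-j}$ (when $y\neq0$). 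The standard way to repackage this is the identity, for $u\in\k^\times$ a value of the right-hand side of \eqref{CD equ}, that $\#\{y\in\k\mid y^d=u\}=\sum_{m=0}^{d-1}\p_d^m(u)$, together with the analogous bookkeeping for $u=0$; this converts $N_1(C_{D,\lambda};\chi^m)$ into $\sum_{x\in\k}\p_d^m\!\Big(\big(\prod_i(1-\lambda_i x)^{b_i}\big)x^a(1-x)^c\Big)$, i.e. $\sum_{x\in\k}\big(\prod_i\ol{\p_d}^{\,mb_i}(1-\lambda_i x)\big)\,\p_d^{ma}(x)\,\ol{\p_d}^{\,mc'}(1-x)$ for the appropriate exponents (using $\p_d^{mc}(1-x)$ before simplification). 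I should track carefully that $\p_d^m$ applied to the product distributes, and that the exponents $b_i$, $a$, $c$ pass into the character exponents modulo $d$.

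Next I would isolate the term $x=0$ (and note $x=1$, $x=\lambda_i^{-1}$ contribute $0$ when the relevant character is nontrivial). For $m=0$: every $\p_d^0=\e$, so the sum over $x\in\k$ of $\e$ of a nonzero quantity counts points, giving $q$ on the nose (one must check the $x\in\{0,1,\lambda_i^{-1}\}$ fibers and the point at infinity behave so that the total is exactly $q$ — this is where the hypotheses $\gcd(d,c)=\gcd(d,b_i)=1$ and the affine-curve conventions matter, and I'd double-check it against the general principle $N_1(C;\chi^0)=N_1(C)$). For $m\neq0$: I rewrite the sum as $\sum_{x\in\k^\times}\big(\prod_i\ol{\p_d}^{\,mb_i}(1-\lambda_i x)\big)\p_d^{ma}(x)\,\p_d^{mc}(1-x)$, then I want to match this to Theorem \ref{FD int ana} (i). The matching requires the substitution $u=x$ and identifying $\a=\p_d^{ma}$, $\b_i=\p_d^{mb_i}$, and $\ol{\a}\c=\p_d^{mc}$, hence $\c=\p_d^{m(a+c)}$; one also needs $\ol{\b_i}=\ol{\p_d}^{\,mb_i}$, which is consistent. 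The hypotheses $\a\neq\c$ and $\b_i\neq\e$ of Theorem \ref{FD int ana} (i) translate precisely to $\p_d^{mc}\neq\e$ and $\p_d^{mb_i}\neq\e$, which hold because $\p_d$ has exact order $d$ and $\gcd(d,c)=\gcd(d,b_i)=1$ and $m\not\equiv0$. So Theorem \ref{FD int ana} (i) gives $\sum_{x}(\cdots)=-j(\p_d^{ma},\p_d^{mc})\FD{n}{\p_d^{ma};\ol{\p_d}^{mb_1},\dots,\ol{\p_d}^{mb_n}}{\p_d^{m(a+c)}}{\lambda_1,\dots,\lambda_n}$, which is the claimed formula.

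The main obstacle I expect is the careful fiber-counting bookkeeping, not the hypergeometric identity: I must handle the degenerate values $x\in\{0,1,\lambda_1^{-1},\dots,\lambda_n^{-1}\}$ where the right-hand side of \eqref{CD equ} vanishes (so $y=0$ is the only point, contributing $1$ to the point count and $\d(\p_d^{m\,?})$-type terms after applying $\sum_m\p_d^m$), and I must decide the convention for $C_{D,\lambda}$ as an affine curve — whether "points at infinity" or points over $x=\infty$ enter $N_1$. The cleanest route is to note that $\sum_{u\in\k}$ (including $u=0$) of $\sum_{m=0}^{d-1}\p_d^m(u)\cdot[\text{indicator of }y^d=u\text{ solvable}]$ already equals $\#\{(x,y)\in\k^2:\eqref{CD equ}\}$ termwise once we use $\p_d^m(0)=0$ for $m\neq0$ and $=1$ conventionally handled, so that the degenerate $x$ simply contribute their single point $(x,0)$ to the $m=0$ sum and nothing to $m\neq0$; then for $m\neq0$ the sum over $x\in\k$ is automatically a sum over $x\in\k^\times$ with the three problematic terms vanishing by nontriviality of the characters, landing exactly in the shape demanded by Theorem \ref{FD int ana} (i). A secondary subtlety is bookkeeping the exponent $c$ versus $a+c$: the factor $x^a(1-x)^c$ produces $\p_d^{ma}(x)\p_d^{mc}(1-x)$, and to invoke Theorem \ref{FD int ana} (i) in the form $\a(u)\ol\a\c(1-u)$ one sets $\ol\a\c=\p_d^{mc}$, forcing $\c=\p_d^{ma}\p_d^{mc}=\p_d^{m(a+c)}$, which is precisely the subscript appearing in the statement; I would make this identification explicit so the reader sees why $a+c$ appears.
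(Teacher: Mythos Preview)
Your proposal is correct and follows essentially the same route as the paper: unwind the definition of $N_1(C_{D,\lambda};\chi^m)$, use that $F(x,y)=\xi(x,y)$ forces $x\in\k$ and $y^{q-1}=\xi$ (or $y=0$), observe that for $m\neq0$ the $y=0$ contribution drops out because $\sum_{\xi}\chi^m(\xi)=0$, arrive at $\sum_{x\in\k}\p_d^m(f(x))$, and then invoke Theorem~\ref{FD int ana}~(i) with $\a=\p_d^{ma}$, $\c=\p_d^{m(a+c)}$, $\b_i=\p_d^{mb_i}$. One small slip: from $y^q=\xi^j y$ you should get $y^{q-1}=\xi^{j}$, not $\xi^{-j}$, but this is harmless after summing over $j$.
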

\begin{proof}
Put $f(x)=\Big(\prod_{i=1}^n (1-\lambda_ix)^{b_i}\Big) x^a(1-x)^c$. 
Then, for each $m\in\Z/d\Z$,
\begin{align*}
N_1&(C_{D,\lambda};\chi^m)\\
&=\dfrac{1}{d}\sum_{\xi\in \mu_d} \chi^m(\xi)\#\{(x,y)\in C_{D,\lambda}(\ol{\k})\mid F(x,y)=(x,\xi y)\}\\
&=\dfrac{1}{d}\sum_{\xi\in \mu_d} \chi^m(\xi)\#\{(x,y)\in C_{D,\lambda}(\ol{\k})\mid x\in\k{\rm\ and\ }\ y^{q-1}=\xi\text{ or }y=0\}\\
&=\sum_{\xi\in \mu_d} \chi^m(\xi)\#\{x\in\k\mid f(x)^{(q-1)/d}=\xi\}+\Big( \dfrac{1}{d}\sum_{\xi\in \mu_d} \chi^m(\xi)\Big)\#\{x\in\k\mid f(x)=0\}.
\end{align*}
Therefore, if $m=0$, then
\begin{align*}
N_1&(C_{D,\lambda};\chi^m)\\
&=\sum_{\xi\in\mu_d} \#\{x\in\k\mid f(x)^{(q-1)/d}=\xi\}+\#\{x\in\k\mid f(x)=0\}=\#\k=q.
\end{align*}
If $m\neq0$, then since $\sum_{\xi\in \mu_d} \chi^m(\xi)=0$, we have
\begin{align*}
N_1(C_{D,\lambda};\chi^m)&=\sum_{\xi\in\mu_d} \chi^m(\xi)\#\{x\in\k\mid f(x)^{(q-1)/d}=\xi\}\\
&=\sum_{x\in\k}\p_d^m(f(x))\\
&=\sum_{x\in\k}\Big(\prod_{i=1}^n \p_d^{mb_i}(1-\lambda_ix)\Big)\p_d^{ma}(x)\p_d^{mc}(1-x).
\end{align*}
Thus, noting that $\p_d^{mb_i}, \p_d^{mc}\neq\e$ for all $i$ by the assumption, the theorem follows from Theorem \ref{FD int ana} (i).
\end{proof}
\begin{rem}
Frechette-Swisher-Tu \cite[Theorem 5.3]{FST} expresses $N_1(C_{D, \lambda})$ in terms of a sum of their Appell-Lauricella functions $\F_D^{(n)}$ over  $\k$.
Since our $F_D^{(n)}$ coincides with their $\F_D^{(n)}$ under the assumption of Theorem \ref{N of CD} by Theorem \ref{FD int ana} (i),  Theorem \ref{N of CD} is a refinement of their result. 
\end{rem}
 
Next, we consider an affine hypersurface $S_{D,\lambda}$ of dimension $n$ over $\k$ defined by the equation
\begin{equation*}
y^d=\Big(1-\sum_{i=1}^n \lambda_ix_i\Big)^a\Big(\prod_{i=1}^n x_i^{b_i}\Big) \Big(1-\sum_{i=1}^n x_i\Big)^c.
\end{equation*}
The group $\mu_d$ acts on $S_{D,\lambda}$ similarly as $C_{D,\lambda}$. 
\begin{thm}\label{N of XD}
Suppose that $gcd(d,a)=gcd(d,c)=1$. Then, 
\begin{align*}
&N_1(S_{D,\lambda};\chi^m)\\
&=\begin{cases}
q^n&(m=0),\\
\displaystyle (-1)^nJ_D\cdot\FD{n}{\ol{\p_d}^{ma};\p_d^{mb_1},\dots,\p_d^{mb_n}}{\p_d^{m(b_1+\cdots+b_n+c)}}{\lambda_1,\dots,\lambda_n}&(m\neq0),
\end{cases}
\end{align*}
where $J_D:=j(\p_d^{mb_1},\dots,\p_d^{mb_n},\p_d^{mc})$.
\end{thm}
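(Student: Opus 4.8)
The plan is to repeat the proof of Theorem~\ref{N of CD} almost verbatim, replacing the single variable $x$ by the tuple $(x_1,\dots,x_n)$, the polynomial $f$ by
\[
g(x_1,\dots,x_n)=\Big(1-\sum_{i=1}^n\lambda_ix_i\Big)^a\Big(\prod_{i=1}^nx_i^{b_i}\Big)\Big(1-\sum_{i=1}^nx_i\Big)^c,
\]
and Theorem~\ref{FD int ana}~(i) by Theorem~\ref{FD int ana}~(ii).

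First I would carry out the same Frobenius/$\mu_d$ bookkeeping as in Theorem~\ref{N of CD}: a point $(x,y)\in S_{D,\lambda}(\ol{\k})$, with $x=(x_1,\dots,x_n)$, satisfying $F(x,y)=(x,\xi y)$ for some $\xi\in\mu_d$ forces $x_i\in\k$ for all $i$ and either $y^{q-1}=\xi$ or $y=0$. Averaging $\chi^m(\xi)$ over $\mu_d$ gives $N_1(S_{D,\lambda};\chi^0)=q^n$, while for $m\neq0$ the locus $y=0$ contributes nothing because $\sum_{\xi\in\mu_d}\chi^m(\xi)=0$, leaving
\[
N_1(S_{D,\lambda};\chi^m)=\sum_{x_1,\dots,x_n\in\k}\p_d^m\big(g(x_1,\dots,x_n)\big).
\]
By multiplicativity of $\p_d^m$ this equals $\sum_{x_i\in\k}\p_d^{ma}\big(1-\sum_i\lambda_ix_i\big)\big(\prod_i\p_d^{mb_i}(x_i)\big)\p_d^{mc}\big(1-\sum_i x_i\big)$, and since $\eta(0)=0$ for every $\eta\in\khat$ the summands with some $x_i=0$ vanish, so the sum runs effectively over $(\k^\times)^n$.

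This is exactly the right-hand side of Theorem~\ref{FD int ana}~(ii) with $\a=\ol{\p_d}^{ma}$, $\b_i=\p_d^{mb_i}$ and $\c=\p_d^{m(b_1+\cdots+b_n+c)}$, so that $\ol{\b_1\cdots\b_n}\c=\p_d^{mc}$. Its hypotheses hold: $\gcd(d,a)=1$ and $d\nmid m$ give $\a\neq\e$, and $\gcd(d,c)=1$ and $d\nmid m$ give $\p_d^{mc}\neq\e$, i.e. $\b_1\cdots\b_n\neq\c$. Applying the theorem yields the Lauricella function in the statement times the prefactor
\[
(-1)^n\,\frac{\big(\prod_{i=1}^n g(\p_d^{mb_i})\big)\,g(\p_d^{mc})}{g^\circ\big(\p_d^{m(b_1+\cdots+b_n+c)}\big)};
\]
since $\p_d^{mc}\neq\e$, the characters $\p_d^{mb_1},\dots,\p_d^{mb_n},\p_d^{mc}$ are not all trivial, so \eqref{J=G} rewrites this prefactor as $(-1)^nJ_D$, which finishes the proof.

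I do not anticipate a genuine obstacle; the only points that need care are verifying the two hypotheses of Theorem~\ref{FD int ana}~(ii) above and observing that, in contrast to Theorem~\ref{N of CD}, one does \emph{not} assume $\gcd(d,b_i)=1$ here: the characters $\p_d^{mb_i}$ may be trivial, but the Lauricella function over $\k$ is defined for arbitrary character parameters, and the reduction of the point count from $\k^n$ to $(\k^\times)^n$ still goes through because $\eta(0)=0$ for all $\eta\in\khat$.
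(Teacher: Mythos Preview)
Your proposal is correct and follows exactly the paper's approach: the paper's proof simply says ``Similarly as the proof of Theorem~\ref{N of CD}'' to obtain the character sum over $(\k^\times)^n$, and then invokes Theorem~\ref{FD int ana}~(ii). Your verification of the hypotheses $\a\neq\e$, $\b_1\cdots\b_n\neq\c$ and your identification of the prefactor with $(-1)^nJ_D$ via \eqref{J=G} are accurate, and your remark that $\gcd(d,b_i)=1$ is not needed here (because the convention $\eta(0)=0$ holds even for $\eta=\e$) is a correct and useful observation.
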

\begin{proof}
Similarly as the proof of Theorem \ref{N of CD}, we have
\begin{equation*}
N_1(S_{D,\lambda,};\chi^0)=\#\k^n=q^n,
\end{equation*}
and if $m\neq0$, then 
\begin{equation*}
N_1(S_{D,\lambda};\chi^m)=\sum_{x_1,\dots,x_n\in\k^\times}\p_d^{ma}\Big(1-\sum_i \lambda_ix_i\Big)\Big(\prod_{i} \p_d^{mb_i}(x_i)\Big)\p_d^{mc}\Big(1-\sum_i x_i\Big).
\end{equation*}
Thus, the theorem follows from Theorem \ref{FD int ana} (ii).
\end{proof}

Fix an integer $r\geq1$. Write $\p_{d,r}=\p_d\circ N_{\k_r/\k}\in\widehat{\k_r}$ where $N_{\k_r/\k}$ is the norm map.

\begin{cor}\label{cor 1} Put hypergeometric functions over $\k_r$ as
\begin{align*}
f_r(\lambda)&=\FD{n}{\p_{d,r}^{ma};\ol{\p_{d,r}}^{mb_1},\dots,\ol{\p_{d,r}}^{mb_n}}{\p_{d,r}^{m(c+a)}}{\lambda_1,\dots,\lambda_n},\\
g_r(\lambda)&=\FD{n}{\ol{\p_{d,r}}^{ma};\p_{d,r}^{mb_1},\dots,\p_{d,r}^{mb_n}}{\p_{d,r}^{m(b_1+\cdots+b_n+c)}}{\lambda_1,\dots,\lambda_n}.
\end{align*}

\begin{enumerate}
\item[(i)] Suppose that $gcd(d,c)=gcd(d,b_i)=1$ for all $i$. Then,
\begin{align*}
L(C_{D,\lambda},\chi^m;t)=\begin{cases}
\dfrac{1}{1-qt}&(m=0),\vspace{5pt}\\
\exp\Big(\sum_{r=1}^\infty j(\p_d^{ma},\p_d^{mc})^r f_r(\lambda)\dfrac{t^r}{r}\Big)^{-1}&(m\neq0).
\end{cases}
\end{align*}

\item[(ii)] Suppose that $gcd(d,a)=gcd(d,c)=1$. Then, 
\begin{align*}
L(S_{D,\lambda},\chi^m;t)=\begin{cases}
\dfrac{1}{1-q^nt}&(m=0),\vspace{5pt}\\
\exp\Big(\sum_{r=1}^\infty J_D^r \cdot g_r(\lambda)\dfrac{t^r}{r}\Big)^{(-1)^n}&(m\neq0),
\end{cases}
\end{align*}
where $J_D$ is as in Theorem \ref{N of XD}.
\end{enumerate}
\end{cor}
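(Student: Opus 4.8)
The plan is to unwind the definition of the Artin $L$-function and feed in the point counts already computed in Theorems \ref{N of CD} and \ref{N of XD}. Recall that
\[
L(V,\chi;t)=\exp\Big(\sum_{r=1}^\infty N_r(V;\chi)\frac{t^r}{r}\Big),
\]
so everything comes down to identifying $N_r(C_{D,\lambda};\chi^m)$ and $N_r(S_{D,\lambda};\chi^m)$, i.e. the $\chi^m$-components of the point counts over the extension field $\k_r$, rather than just over $\k$. The key observation is that the proofs of Theorems \ref{N of CD} and \ref{N of XD} are uniform in the base field: replacing $\k$ by $\k_r$ throughout, the group $\mu_d$ still acts (since $d\mid q-1\mid q^r-1$), the generator-of-$\widehat{\k_r^\times}$ argument goes through, and the relevant character of order $d$ on $\k_r^\times$ is precisely $\p_{d,r}=\p_d\circ N_{\k_r/\k}$ — this is a character of $\k_r^\times$ of exact order $d$ because the norm map $N_{\k_r/\k}\colon\k_r^\times\to\k^\times$ is surjective. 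Hence, applying Theorem \ref{N of CD} and Theorem \ref{N of XD} verbatim over $\k_r$ gives, for $m\neq0$,
\[
N_r(C_{D,\lambda};\chi^m)=-j_r(\p_{d,r}^{ma},\p_{d,r}^{mc})\,f_r(\lambda),
\qquad
N_r(S_{D,\lambda};\chi^m)=(-1)^n J_{D,r}\,g_r(\lambda),
\]
where $j_r$ and $J_{D,r}$ denote the corresponding Jacobi sums over $\k_r$, and $N_r(\,\cdot\,;\chi^0)=q^r$ resp. $q^{nr}$.

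Next I would handle the $m=0$ case, which is immediate: $\sum_{r\geq1} q^r t^r/r=-\log(1-qt)$, giving $L(C_{D,\lambda},\chi^0;t)=1/(1-qt)$, and likewise $1/(1-q^nt)$ for $S_{D,\lambda}$. For $m\neq0$, substituting the expressions above yields
\[
L(C_{D,\lambda},\chi^m;t)=\exp\Big(-\sum_{r=1}^\infty j_r(\p_{d,r}^{ma},\p_{d,r}^{mc})\,f_r(\lambda)\frac{t^r}{r}\Big)
=\exp\Big(\sum_{r=1}^\infty j_r(\p_{d,r}^{ma},\p_{d,r}^{mc})\,f_r(\lambda)\frac{t^r}{r}\Big)^{-1},
\]
and similarly for $S_{D,\lambda}$ with an exponent $(-1)^n$. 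The only remaining discrepancy is notational: the statement writes $j(\p_d^{ma},\p_d^{mc})^r$ rather than $j_r(\p_{d,r}^{ma},\p_{d,r}^{mc})$. These agree by the Hasse–Davenport relation for Jacobi sums: if $j(\eta_1,\dots,\eta_s)$ is the Jacobi sum over $\k$ and $j_r(\eta_{1,r},\dots,\eta_{s,r})$ the one over $\k_r$ with $\eta_{i,r}=\eta_i\circ N_{\k_r/\k}$, then $j_r(\eta_{1,r},\dots,\eta_{s,r})=j(\eta_1,\dots,\eta_s)^r$ (this follows from the Hasse–Davenport relation $g_r(\eta_r)=(-1)^{r-1}g(\eta)^r$ for Gauss sums together with \eqref{J=G}, being slightly careful with the factors $g^\circ$, which under our hypotheses $\gcd(d,a)=\gcd(d,c)=1$ etc. involve only nontrivial characters). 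Plugging this identity in gives exactly the stated form. The case $J_D$ is identical: $J_{D,r}=j_r(\p_{d,r}^{mb_1},\dots,\p_{d,r}^{mb_n},\p_{d,r}^{mc})=J_D^{\,r}$.

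The main obstacle — really the only nonroutine point — is justifying the passage from $\k$ to $\k_r$ in the point-count formulas together with the correct bookkeeping of the Hasse–Davenport relation for the Jacobi sums. One must check that the exceptional cases in \eqref{J=G} and \eqref{Gauss sum thm} (where $g^\circ$ differs from $g$) do not arise: under the coprimality hypotheses $\gcd(d,b_i)=\gcd(d,c)=1$ (resp. $\gcd(d,a)=\gcd(d,c)=1$), all the characters $\p_d^{mb_i}$, $\p_d^{mc}$ (resp. $\p_d^{ma}$) with $m\not\equiv0\pmod d$ are nontrivial, so the Jacobi sums in question are ordinary Gauss-sum ratios and the clean Hasse–Davenport scaling $j_r=j^r$ applies. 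Everything else is a direct substitution into the defining series of the $L$-function.
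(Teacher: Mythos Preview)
Your proposal is correct and follows essentially the same route as the paper: apply Theorems \ref{N of CD} and \ref{N of XD} over each extension $\k_r$ (with $\p_d$ replaced by $\p_{d,r}$), then invoke Hasse--Davenport to rewrite the Jacobi sums over $\k_r$ as $r$th powers of those over $\k$, and substitute into the defining series of the $L$-function. One small correction: with the paper's sign convention $g(\eta)=-\sum_x\psi(x)\eta(x)$, the Hasse--Davenport relation reads $g(\eta_r)=g(\eta)^r$ (no $(-1)^{r-1}$), which is exactly what makes $j_r(\eta_{1,r},\dots,\eta_{s,r})=j(\eta_1,\dots,\eta_s)^r$ come out cleanly via \eqref{J=G}.
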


\begin{proof}
For each $r\geq1$, let $\varphi'$ be a generator of $\widehat{\k_r^\times}$ such that $\p'|_{\k^\times}=\p$. 
By applying Theorems \ref{N of CD} and \ref{N of XD} for $\k_r$ and $\p'$, we obtain the formulas for $N_r(C_{D,\lambda};\chi^m)$ and $N_r(S_{D,\lambda};\chi^m)$. 
Note that $\p_d$ is replaced with $\p'^{(q^r-1)/d}=\p_{d,r}$.
For $\eta\in\widehat{\k^\times}$ and $\eta_r:=\eta\circ N_{\k_r/\k}$, we have the Davenport-Hasse theorem (cf. \cite{weil})
\begin{equation*}
g(\eta_r)=g(\eta)^r.\label{DH}
\end{equation*}
By this, we have
$$j(\p_{d,r}^{ma},\p_{d,r}^{mc})=j(\p_d^{ma},\p_d^{mc})^r,\quad j(\p_{d,r}^{mb_1},\dots,\p_{d,r}^{mb_n},\p_{d,r}^{mc})=J_D^r.$$
Thus, the corollary follows formally.
\end{proof}

\subsection{Smooth compactification of $C_{D,\lambda}$}
Let $\ol{C}_{D,\lambda}$ be the projective curve defined by the homogenization of \eqref{CD equ} with $x=X/Z$, $y=Y/Z$ :
\begin{equation*}
\begin{cases}
Y^d=Z^{e}X^a(Z-X)^c\prod_i (Z-\lambda_iX)^{b_i}&({\rm if\ }d\geq a+\sum_i b_i+c),\vspace{5pt}\\
Z^{e}Y^d=X^a(Z-X)^c\prod_i (Z-\lambda_iX)^{b_i}&({\rm if\ }d<a+\sum_i b_i+c),
\end{cases}
\end{equation*}
where 
$$e:=|a+\sum_i b_i+c-d|.$$
Recall that $\prod_i \lambda_i (1-\lambda_i) \prod_{j\neq i}(\lambda_j-\lambda_i)\neq 0$.
The group $\mu_d$ acts on $\ol{C}_{D,\lambda}$ by $\xi\cdot(X:Y:Z)=(X:\xi Y:Z)$ ($\xi\in\mu_d$).
Suppose that $e>0$. Then, $\ol{C}_{D,\lambda}$ has the only one point at infinity, denoted by $\infty$. 
Since $\mu_d$ and $F$ acts on $\infty$ trivially, we have
\begin{equation}
N_r(\ol{C}_{D,\lambda};\chi^m)-N_r(C_{D,\lambda};\chi^m)=
\begin{cases}
1&(m=0),\vspace{5pt}\\
0&(m\neq0).
\end{cases}\label{eq 1}
\end{equation}
If $a>1$ (resp. $b_i>1$, $c>1$, $e>1$) then $\ol{C}_{D,\lambda}$ is singular at $(0:0:1)$ (resp. at $(\lambda_i^{-1}:0:1)$, $(1:0:1)$, $\infty$).
Archinard \cite{Archinard} constructs a desingularization $\pi:X_{D,\lambda}\rightarrow \ol{C}_{D,\lambda}$.
Now we suppose 
\begin{equation}\label{ass}
\gcd(d,a)=\gcd(d,b_i)=\gcd(d,c)=\gcd(d,e)=1.
\end{equation}
Then, we have $\# \pi^{-1}(P)=1$ for all $P\in\{(0:0:1), (\lambda_i^{-1}:0:1), (1:0:1), \infty\}$ (see \cite[subsection 3.1]{Archinard}), and we obtain, for all $m$,
\begin{equation}
N_r(X_{D,\lambda};\chi^m)=N_r(\ol{C}_{D,\lambda};\chi^m).\label{eq 2}
\end{equation}
By \eqref{eq 1}, \eqref{eq 2} and Theorem \ref{N of CD}, we obtain the following corollary similarly as Corollary \ref{cor 1}.
\begin{cor}\label{N of X}
Under the assumption \eqref{ass}, we have
\begin{align*}
&N_r(X_{D,\lambda};\chi^m)\\
&=\begin{cases}
1+q^r&(m=0),\vspace{5pt}\\
 \displaystyle -j\big(\p_d^{ma},\p_d^{mc}\big)^r\FD{n}{\p_{d,r}^{ma};\ol{\p_{d,r}}^{mb_1},\dots,\ol{\p_{d,r}}^{mb_n}}{\p_{d,r}^{m(a+c)}}{\lambda_1,\dots,\lambda_n}&(m\neq0).
\end{cases}
\end{align*}
\end{cor}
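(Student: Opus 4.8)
The plan is to assemble Corollary~\ref{N of X} purely formally from the three ingredients already prepared: the decomposition of $N_r(\ol{C}_{D,\lambda};\chi^m)$ via the desingularization, the point count on the affine curve from Theorem~\ref{N of CD}, and the Davenport--Hasse relation used in the proof of Corollary~\ref{cor 1}. First I would observe that the assumption \eqref{ass} is exactly what is needed to invoke both \eqref{eq 1} and \eqref{eq 2}: the coprimality conditions $\gcd(d,a)=\gcd(d,b_i)=\gcd(d,c)=1$ are the hypotheses of Theorem~\ref{N of CD}, while the extra condition $\gcd(d,e)=1$ guarantees (via \cite[subsection~3.1]{Archinard}) that $\pi$ has a single point above each of the singular points and above $\infty$, so that \eqref{eq 2} holds for every $m$. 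Combining \eqref{eq 1} and \eqref{eq 2} gives
\begin{equation*}
N_r(X_{D,\lambda};\chi^m)=N_r(C_{D,\lambda};\chi^m)+\d(m),
\end{equation*}
where $\d(m)=1$ if $m=0$ and $0$ otherwise, with $N_r$ the count over $\k_r$.

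Next I would apply Theorem~\ref{N of CD} not over $\k$ but over the degree-$r$ extension $\k_r$, exactly as in the proof of Corollary~\ref{cor 1}: choosing a generator $\p'$ of $\widehat{\k_r^\times}$ with $\p'|_{\k^\times}=\p$, the character $\p_d$ gets replaced by $\p'^{(q^r-1)/d}=\p_{d,r}$, so that for $m\neq 0$
\begin{equation*}
N_r(C_{D,\lambda};\chi^m)=-j\big(\p_{d,r}^{ma},\p_{d,r}^{mc}\big)\,\FD{n}{\p_{d,r}^{ma};\ol{\p_{d,r}}^{mb_1},\dots,\ol{\p_{d,r}}^{mb_n}}{\p_{d,r}^{m(a+c)}}{\lambda_1,\dots,\lambda_n},
\end{equation*}
and $N_r(C_{D,\lambda};\chi^0)=q^r$. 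The Davenport--Hasse theorem $g(\eta_r)=g(\eta)^r$, applied through the Jacobi-sum formula \eqref{J=G}, yields $j\big(\p_{d,r}^{ma},\p_{d,r}^{mc}\big)=j\big(\p_d^{ma},\p_d^{mc}\big)^r$, which rewrites the scalar factor in the stated form. Adding $\d(m)$ turns $q^r$ into $1+q^r$ in the $m=0$ case and leaves the $m\neq0$ formula untouched, giving precisely the asserted expression.

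The only point requiring a little care is justifying \eqref{eq 2} under \eqref{ass}, i.e. checking that the preimages of the singular points and of $\infty$ under Archinard's desingularization $\pi$ are single points on which $F$ and $\mu_d$ act as they do on the base points; but this is already recorded in the paragraph preceding the corollary, citing \cite[subsection~3.1]{Archinard}, so for the proof it suffices to quote it. I do not anticipate a genuine obstacle here: the content is entirely in Theorem~\ref{N of CD} and in the geometric input from \cite{Archinard}, and the corollary is a bookkeeping consequence, exactly parallel to the derivation of Corollary~\ref{cor 1} from Theorems~\ref{N of CD} and~\ref{N of XD}. Accordingly I would phrase the proof in one short paragraph: ``By \eqref{eq 1} and \eqref{eq 2}, $N_r(X_{D,\lambda};\chi^m)=N_r(C_{D,\lambda};\chi^m)$ for $m\neq0$ and $=N_r(C_{D,\lambda};\chi^0)+1$ for $m=0$; now apply Theorem~\ref{N of CD} over $\k_r$ together with the Davenport--Hasse relation as in the proof of Corollary~\ref{cor 1}.''
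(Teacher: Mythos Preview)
Your proposal is correct and follows essentially the same approach as the paper, which simply writes ``By \eqref{eq 1}, \eqref{eq 2} and Theorem~\ref{N of CD}, we obtain the following corollary similarly as Corollary~\ref{cor 1}.'' One small inaccuracy: the hypotheses of Theorem~\ref{N of CD} are only $\gcd(d,c)=\gcd(d,b_i)=1$; the condition $\gcd(d,a)=1$ is needed not for that theorem but (together with $\gcd(d,e)=1$) for the single-fiber statement \eqref{eq 2} from \cite{Archinard}.
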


Therefore, the Artin $L$-function $L(X_{D,\lambda},\chi^m;t)$ is expressed in terms of the hypergeometric functions over $\k_r$ ($r\geq 1$) and the Jacobi sum.
In fact, we show that the first $n+1$ functions are sufficient.

Let $l\neq p$ be a prime number and $H^i(\ol{X}_{D,\lambda}, \ol{\Q_l})(\chi^m)$ be the $\chi^m$-eigencomponent of the $l$-adic \'etale cohomology of $\ol{X}_{D,\lambda}=X_{D,\lambda}\otimes_\k \ol{\k}$, where we fixed an embedding $\ol{\Q}\hookrightarrow \ol{\Q_l}$.
By the Grothendieck-Lefschetz trace formula (cf. \cite[Theorem 2.9]{Freitag-Kiehl})
$$N_r(X_{D,\lambda};\chi^m) = \sum_{i=0}^{2} (-1)^{i}\mathrm{Tr} \Big((F^*)^r\mid H^i(\ol{X}_{D,\lambda},\ol{\Q_l})(\chi^m)\Big),$$
we have
\begin{equation*}
L(X_{D,\lambda},\chi^m;t)=\prod_{i=0}^2\det\Big(1-F^*t\mid H^i(\ol{X}_{D,\lambda},\ol{\Q_l})(\chi^m)\Big)^{(-1)^{i+1}}.
\end{equation*}
By the following theorem, it follows that the $F_D^{(n)}$ functions in Corollary \ref{N of X} for $r=1,2,\dots$  are written as symmetric polynomials of the first $n+1$ functions.

\begin{thm}\label{deg L}
Under the assumption \eqref{ass},
if $m\neq 0$, then $L(X_{D,\lambda},\chi^m;t)$ is a polynomial of degree $n+1$. 
\end{thm}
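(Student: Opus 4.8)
The plan is to compute the dimension of the $\chi^m$-eigenspace $H^1(\ol{X}_{D,\lambda},\ol{\Q_l})(\chi^m)$ directly, since $X_{D,\lambda}$ is a smooth projective curve and $L(X_{D,\lambda},\chi^m;t)$ for $m\neq0$ is exactly $\det(1-F^*t\mid H^1(\ol{X}_{D,\lambda},\ol{\Q_l})(\chi^m))$ (the $H^0$ and $H^2$ eigencomponents vanish for $m\neq 0$ because $\mu_d$ acts trivially on those one-dimensional spaces, as already used to split off the factors $1+q^rt$-type contributions in Corollary~\ref{N of X}). Thus the degree of the $L$-function equals $\dim_{\ol{\Q_l}} H^1(\ol{X}_{D,\lambda},\ol{\Q_l})(\chi^m)$, and everything reduces to a computation on the curve $X_{D,\lambda}$, which under assumption \eqref{ass} is the smooth model described via Archinard's desingularization.

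The key computation is to realize $X_{D,\lambda}\to\P^1$ (the $x$-coordinate map) as a cyclic cover of degree $d$ branched over the $n+3$ points $\{0,1,\infty,\lambda_1^{-1},\dots,\lambda_n^{-1}\}$ (using $\prod_i\lambda_i(1-\lambda_i)\prod_{j\neq i}(\lambda_j-\lambda_i)\neq0$, these are genuinely distinct), with local monodromy data given by $a,c,e,b_1,\dots,b_n$ at the respective points. Since $\gcd(d,a)=\gcd(d,c)=\gcd(d,e)=\gcd(d,b_i)=1$, at every branch point the fiber is a single point and the cover is totally ramified there, so for each nontrivial character $\chi^m$ the eigensheaf on $\P^1\setminus\{n+3\text{ points}\}$ is a rank-one local system whose local monodromy at each of the $n+3$ punctures is nontrivial (here one uses the coprimality to see $\p_d^{ma}\neq\e$, etc.). The Euler characteristic of such a rank-one local system on $\P^1$ minus $N$ points is $2-N$ when all local monodromies are nontrivial, so $\dim H^1 = N-2 = (n+3)-2 = n+1$; there is no $H^0$ or $H^2$ contribution since the monodromy is nontrivial at each puncture and the local system is not trivial. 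Concretely one can instead invoke the Riemann–Hurwitz / Chevalley–Weil formula for the genus contribution of the $\chi^m$-eigenspace, or cite Archinard's explicit genus computation \cite{Archinard} for $X_{D,\lambda}$ together with the eigenspace decomposition; either route yields $\dim H^1(\ol{X}_{D,\lambda},\ol{\Q_l})(\chi^m)=n+1$.

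I would organize the write-up as follows. First, record that for $m\neq0$ the eigencomponents $H^0(\ol{X}_{D,\lambda},\ol{\Q_l})(\chi^m)=H^2(\ol{X}_{D,\lambda},\ol{\Q_l})(\chi^m)=0$, so $L(X_{D,\lambda},\chi^m;t)=\det(1-F^*t\mid H^1(\ol{X}_{D,\lambda},\ol{\Q_l})(\chi^m))$ has degree $\dim H^1(\ol{X}_{D,\lambda},\ol{\Q_l})(\chi^m)$. Second, identify the branch locus of $x:X_{D,\lambda}\to\P^1$ as $\{0,1,\infty,\lambda_i^{-1}\}$ ($n+3$ points), checking with \eqref{ass} that $\mu_d$ acts freely away from the fibers over these points and that each such fiber is a single $\mu_d$-fixed point. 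Third, compute $\dim H^1(\ol{X}_{D,\lambda},\ol{\Q_l})(\chi^m)$ either by the Euler-characteristic formula for rank-one local systems on the punctured line with nontrivial local monodromies — giving $(n+3)-2 = n+1$ — or equivalently by Chevalley–Weil applied to the cyclic $\mu_d$-cover. Finally, conclude that $L(X_{D,\lambda},\chi^m;t)$ is a polynomial of degree $n+1$, and hence (as noted before the theorem) the functions $F_D^{(n)}(\lambda_1,\dots,\lambda_n)_{\k_r}$ appearing in Corollary~\ref{N of X} are the power sums of the $n+1$ reciprocal roots and so are symmetric polynomials in those $n+1$ quantities.

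The main obstacle is the careful bookkeeping at the ramification points — in particular verifying that under \eqref{ass} the desingularization $\pi$ contributes exactly one point over each of $(0{:}0{:}1)$, $(1{:}0{:}1)$, $(\lambda_i^{-1}{:}0{:}1)$ and $\infty$ (this is where \cite[subsection 3.1]{Archinard} is used), and that the associated local monodromy characters $\p_d^{ma}$, $\p_d^{mc}$, $\p_d^{me}$, $\p_d^{mb_i}$ are all nontrivial for $m\neq0$, so that no puncture is "unramified" for the eigen-local-system and the naive count $N-2$ is valid without correction terms. Once this local analysis is pinned down, the dimension count is immediate.
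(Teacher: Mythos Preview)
Your argument is correct, but it proceeds quite differently from the paper.  The paper also reduces to showing $d_m:=\dim H^1(\ol{X}_{D,\lambda},\ol{\Q_l})(\chi^m)=n+1$, but then argues in two steps: first, Archinard's genus formula gives $\sum_{m=1}^{d-1}d_m=2g=(d-1)(n+1)$, so it suffices to prove $d_m\geq n+1$ for each $m$; second, for the lower bound the paper passes to characteristic~$0$ via smooth base change and Artin comparison, produces an explicit second-kind differential $\omega_m$ in $\mathcal{H}^1_{\rm dR}(X_D/S)(\chi^m)$, computes its period as the complex $F_D^{(n)}$, and invokes the irreducibility of the rank-$(n+1)$ Lauricella system (Mimachi--Sasaki) to force a rank-$(n+1)$ submodule.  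Your route is purely $\ell$-adic and stays in positive characteristic: identify $H^1(\ol{X}_{D,\lambda},\ol{\Q_l})(\chi^m)$ with $H^1$ of a tame rank-one local system on $\P^1$ minus $n+3$ points, check (using \eqref{ass}) that every local monodromy is nontrivial, and read off $\dim H^1=(n+3)-2$ from the Euler characteristic.  Your argument is shorter, more self-contained, and avoids the transcendental detour through periods and ODE irreducibility; the paper's approach, on the other hand, makes explicit the link between the $\ell$-adic eigenspace and the complex monodromy of $F_D^{(n)}$, which is conceptually relevant to the theme of the article even if not logically necessary for the degree count.
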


\begin{proof}
Since $H^i(\ol{X}_{D,\lambda}, \ol{\Q_l})=H^i(\ol{X}_{D,\lambda}, \ol{\Q_l})(\chi^0)$ for $i=0,2$, it suffices to show $$d_m:=\dim_{\ol{\Q_l}}H^1(\ol{X}_{D,\lambda}, \ol{\Q_l})(\chi^m)=n+1.$$
Since the quotient $X_{D,\lambda}/\mu_d$ is a rational curve, $H^1(\ol{X}_{D,\lambda}, \ol{\Q_l})(\chi^0)=0$ and 
$$\sum_{m=1}^{d-1} d_m = 2\cdot {\rm genus}(X_{D,\lambda})=(d-1)(n+1),$$
by \cite[Theorem 4.1]{Archinard} (note that $d$ and $n$ are not both even by the assumption \eqref{ass}).
Hence, it suffices to show that $d_m \geq n+1$.

By a standard argument using the smooth base change theorem  (cf. \cite[Theorem 7.3]{Freitag-Kiehl}) and the Artin comparison theorem (cf. \cite[Proposition 11.6]{Freitag-Kiehl}), we are reduced to characteristic $0$. 
Regard $\k$ as a residue field of a number field in such a way that the character of $\mu_d(\C)\cong \mu_d(\k)$ induced by $\chi$ is the inclusion.
Put
$$S =  \Big\{ (t_1,\dots,t_n) \in \mathbb{C}^n \mid \prod_{i=1}^n t_i (1-t_i) \prod_{j\neq i} (t_j-t_i)\neq 0 \Big\},$$
and let $f:X_D\rightarrow S$ be the relative projective curve over $\C$ defined by the equation \eqref{CD equ}.
Since $f$ is smooth, the relative algebraic de Rham cohomology $\mathcal{H}_{\rm dR}^1 (X_D/S)=R^1f_*\Omega_{X_D/S}^\bullet$ is a locally free $\mathcal{O}_S$-module and ${\rm rank}_{\mathcal{O}_S} \mathcal{H}_{\rm dR}^1 (X_D/S)(\chi^m)=d_m$.

For $m=1,\dots,d-1$, put a differential 1-form on the fibre $X_{D,\lambda}$ as 
$$\omega_m=\frac{y^m}{x(1-x)} dx.$$
We show that it is of the second kind. 
It may have a pole only at $\infty$.
A local parametrization of $X_{D,\lambda}$ at $\infty$ is given by (cf. \cite[(7) and (8)]{Archinard})
$$(x,y)=\Big(s^{-d},s^{-(a+c+\sum b_j)}(s^d-1)^\frac{c}{d}\prod_i (s^d-\lambda_i)^{\frac{b_i}{d}}\Big),$$
where $s\in\C$ takes values in a neighbourhood of $0$ on which $(s^d-1)\prod (s^d-\lambda_i)\neq 0$. 
Then, we have
$$\omega_m = -d \cdot s^{-m(a+c+\sum b_j)+d-1}(s^d-1)^{\frac{mc}{d}-1} \prod_i (s^d-\lambda_i)^{\frac{mb_i}{d}}\, ds.$$
Since $(s^d-1)^{mc/d}\prod (s^d-\lambda_i)^{mb_i/d}$ is a power series in $s^d$ and $\gcd(d,a+c+\sum b_j)=\gcd(d, e)=1$ by the assumption \eqref{ass}, $\omega_m$ has the trivial residue, thus is of the second kind.
Hence, it defines a section of $\mathcal{H}_{\rm dR}^1 (X_D/S)(\chi^m)$.

Define a path $\delta : [0,1] \to X_{D,\lambda}(\C)$ by $\delta(t)=(t, \sqrt[d]{t^a(1-t)^c\prod_i (1-\lambda_it)^{b_i}})$, where the branch of the $d$th root is taken by setting $|\mathrm{arg}(t^a(1-t)^c\prod_i(1-\lambda_i t)^{b_i})|<\pi$ when $\lambda_i$ are close to $0$, and continued analytically.
Choose a primitive root $\xi \in \mu_d$ and put $\gamma=\delta-\xi_*\delta$.
Then, we have the period by \eqref{FD int},
$$\int_\c \omega_m=(1-\xi^m)B\Big(\dfrac{ma}{d},\dfrac{mc}{d} \Big)\FD{n}{\frac{ma}{d}; -\frac{mb_1}{d},\dots,-\frac{mb_n}{d}}{\frac{m(a+c)}{d}}{\lambda_1,\dots,\lambda_n}.$$
This $F_D^{(n)}$ function satisfies a system of differential equations of rank $n+1$, which is irreducible by a result of Mimachi-Sasaki \cite[Theorem 3.1]{Mimachi-Sasaki} and our assumption \eqref{ass}.
This shows that $\mathcal{H}_{\rm dR}^1 (X_D/S)(\chi^m)$ contains an $\mathcal{O}_S$-submodule of rank $n+1$. Hence $d_m\geq n+1$ and the theorem is proved. 
\end{proof}

\subsection{Algebraic varieties related to $F_A$ and $F_B$}
We consider $n$-dimensional affine hypersurfaces $S^1_{A,\lambda}$, $S^2_{A,\lambda}$ and $S_{B,\lambda}$ over $\k$ defined by the equations
\begin{align*}
S^1_{A,\lambda}&:y^d=\Big(1-\sum_{i=1}^n \lambda_ix_i\Big)^{a}\prod_{i=1}^n x_i^{b_i}(1-x_i)^{c_i},\\
S^2_{A,\lambda}&: y^d=\Big( \prod_{i=1}^n (x_i -\lambda_i)^{b_i}\Big) \Big( \prod_{i=1}^n x_i ^{c_i}\Big) \Big( 1 - \sum_{i=1}^n x_i \Big)^{a}, \\
S_{B,\lambda}&:y^d=\Big(\prod_{i=1}^n(1-\lambda_ix_i)^{a_i}\Big)\Big(\prod_{i=1}^n x_i^{b_i}\Big)\Big(1-\sum_{i=1}^n x_i\Big)^c,
\end{align*}
where $d,a,a_1,\dots,a_n,b_1,\dots,b_n,c,c_1,\dots,c_n\in\Z_{\geq1}$, and $\lambda_1,\dots,\lambda_n\in\k^\times$. 
Suppose that $d\mid q-1$. 
In the same way as the previous subsection, the group $\mu_d$ acts on these hypersurfaces. 
Similarly as in the proof of Theorem \ref{N of XD}, we can show the followings by using Theorems \ref{FA int ana}, \ref{FA int ana 2} and \ref{FB integral}.

\begin{thm}\label{N of XA}\
\begin{enumerate}
\item Suppose that $gcd(d,a)=gcd(d,c_i)=1$ for all $i$. 
Then, 
\begin{align*}
&N_1(S^1_{A,\lambda};\chi^m)\\
&=\begin{cases}
q^n&(m=0),\vspace{5pt}\\
\displaystyle \Big( \prod_{i=1}^n -j(\p_d^{mb_i},\p_d^{mc_i})\Big) \FA{n}{\ol{\p_d}^{ma};\p_d^{mb_1},\dots,\p_d^{mb_n}}{\p_d^{m(b_1+c_1)},\dots,\p_d^{m(b_n+c_n)}}{\lambda_1,\dots,\lambda_n}&(m\neq0).
\end{cases}
\end{align*}

\item Suppose that $gcd(d,a)=gcd(d, b_i)=1$ for all $i$. 
Then,
\begin{align*}
&N_1(S^2_{A,\lambda};\chi^m)\\
&=\begin{cases}
q^n&(m=0),\vspace{5pt}\\
\displaystyle (-1)^n J_A \cdot \FA{n}{\ol{\p_d}^{m(a+\sum_{i=1}^n (b_i + c_i))};\ol{\p_d}^{mb_1},\dots,\ol{\p_d}^{mb_n}}{\ol{\p_d}^{m(b_1+c_1)},\dots,\ol{\p_d}^{m(b_n+c_n)}}{\lambda_1,\dots,\lambda_n}&(m\neq0),
\end{cases}
\end{align*}
where $J_A := j\big(\p_d^{ma}, \p_d^{m(b_1+c_1)}, \dots, \p_d^{m(b_n+c_n)}\big)$.
\end{enumerate}
\end{thm}

\begin{thm}\label{N of XB}Suppose that $gcd(d,a_i)=gcd(d,c)=1$ for all $i$. 
Then, 
\begin{align*}
&N_1(S_{B,\lambda};\chi^m)\\
&=\begin{cases}
q^n&(m=0),\vspace{5pt}\\
\displaystyle (-1)^nJ_B\cdot\FB{n}{\ol{\p_d}^{ma_1},\dots,\ol{\p_d}^{ma_n};\p_d^{mb_1},\dots,\p_d^{mb_n}}{\p_d^{m(b_1+\cdots+b_n+c)}}{\lambda_1,\dots,\lambda_n}&(m\neq0),
\end{cases}
\end{align*}
where $J_B:=j\big(\p_d^{mb_1},\dots,\p_d^{mb_n},\p_d^{mc}\big)$.
\end{thm}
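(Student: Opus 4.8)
The plan is to mirror the proof of Theorem~\ref{N of XD} step by step, replacing the single factor $(1-\sum_i\lambda_ix_i)^a$ there by the product $\prod_i(1-\lambda_ix_i)^{a_i}$, and invoking Theorem~\ref{FB integral} in place of Theorem~\ref{FD int ana}~(ii) at the final step.

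First I would carry out the elementary point count. Put
\[
f(x_1,\dots,x_n)=\Big(\prod_{i=1}^n(1-\lambda_ix_i)^{a_i}\Big)\Big(\prod_{i=1}^n x_i^{b_i}\Big)\Big(1-\sum_{i=1}^n x_i\Big)^c.
\]
A point $(x_1,\dots,x_n,y)\in S_{B,\lambda}(\ol{\k})$ satisfies $F(x,y)=\xi\cdot(x,y)$ for some $\xi\in\mu_d$ precisely when all $x_i\in\k$ and either $y=0$ or $y^{q-1}=\xi$; arguing exactly as in the proofs of Theorems~\ref{N of CD} and~\ref{N of XD} gives
\[
N_1(S_{B,\lambda};\chi^m)=\sum_{\xi\in\mu_d}\chi^m(\xi)\#\{x\in\k^n\mid f(x)^{(q-1)/d}=\xi\}+\Big(\tfrac1d\sum_{\xi\in\mu_d}\chi^m(\xi)\Big)\#\{x\in\k^n\mid f(x)=0\}.
\]
For $m=0$ the two terms add up to $\#\k^n=q^n$. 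For $m\neq0$ the second term vanishes, and using $\chi^m(t^{(q-1)/d})=\p_d^m(t)$ for $t\in\k^\times$ together with the convention $\p_d^m(0)=0$, the first term equals $\sum_{x\in\k^n}\p_d^m(f(x))$; since the factor $\prod_i x_i^{b_i}$ makes the summand vanish unless every $x_i\in\k^\times$, this reads
\[
N_1(S_{B,\lambda};\chi^m)=\sum_{x_1,\dots,x_n\in\k^\times}\Big(\prod_{i=1}^n\p_d^{ma_i}(1-\lambda_ix_i)\Big)\Big(\prod_{i=1}^n\p_d^{mb_i}(x_i)\Big)\p_d^{mc}\Big(1-\sum_{i=1}^n x_i\Big).
\]

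Next I would match this against the right-hand side of Theorem~\ref{FB integral} through the substitution $\a_i=\ol{\p_d}^{ma_i}$, $\b_i=\p_d^{mb_i}$ and $\c=\p_d^{m(b_1+\cdots+b_n+c)}$, so that $\ol{\a_i}=\p_d^{ma_i}$ and $\ol{\b_1\cdots\b_n}\c=\p_d^{mc}$. The hypotheses of Theorem~\ref{FB integral} hold when $m\neq0$: since $\gcd(d,a_i)=1$ one has $\p_d^{ma_i}\neq\e$, hence $\a_i\neq\e$; and since $\gcd(d,c)=1$ one has $\p_d^{mc}\neq\e$, hence $\b_1\cdots\b_n\neq\c$. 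Applying the theorem writes $N_1(S_{B,\lambda};\chi^m)$ as the Gauss-sum factor $\big(\prod_ig(\p_d^{mb_i})\big)g(\p_d^{mc})\big/g^\circ(\p_d^{m(b_1+\cdots+b_n+c)})$ times the asserted $F_B^{(n)}$. Finally, because $\p_d^{mc}\neq\e$, not all of $\p_d^{mb_1},\dots,\p_d^{mb_n},\p_d^{mc}$ are trivial, so \eqref{J=G} identifies this prefactor with $j(\p_d^{mb_1},\dots,\p_d^{mb_n},\p_d^{mc})=J_B$, yielding the claimed formula.

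I do not expect a genuine obstacle: the argument is essentially bookkeeping. The only points requiring attention are checking that the nontriviality conditions $\a_i\neq\e$ and $\b_1\cdots\b_n\neq\c$ demanded by Theorem~\ref{FB integral}, as well as the non-degeneracy needed to apply \eqref{J=G}, all follow from $\gcd(d,a_i)=\gcd(d,c)=1$ together with $d\nmid m$, and tracking the convention $\eta(0)=0$ carefully so that the character sum legitimately runs over $(\k^\times)^n$ and the locus where $f$ vanishes contributes only in the $m=0$ case.
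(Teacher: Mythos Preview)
Your proposal is correct and follows exactly the approach the paper intends: the paper simply remarks that Theorem~\ref{N of XB} is proved ``similarly as in the proof of Theorem~\ref{N of XD}'' by invoking Theorem~\ref{FB integral}, and your write-up carries out precisely this routine verification, including the checks that $\a_i\neq\e$, $\b_1\cdots\b_n\neq\c$, and that \eqref{J=G} applies to identify the Gauss-sum prefactor with $J_B$.
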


Similarly as Corollary \ref{cor 1}, we have the following.
\begin{cor}\label{L of SAB} Put
\begin{align*}
f_r(\lambda)&=\FA{n}{\ol{\p_{d,r}}^{ma};\p_{d,r}^{mb_1},\dots,\p_{d,r}^{mb_n}}{\p_{d,r}^{m(b_1+c_1)},\dots,\p_{d,r}^{m(b_n+c_n)}}{\lambda_1,\dots,\lambda_n},\\
g_r(\lambda)&=\FA{n}{\ol{\p_{d,r}}^{m(a+\sum_{i=1}^n (b_i + c_i))};\ol{\p_{d,r}}^{mb_1},\dots,\ol{\p_{d,r}}^{mb_n}}{\ol{\p_{d,r}}^{m(b_1+c_1)},\dots,\ol{\p_{d,r}}^{m(b_n+c_n)}}{\lambda_1,\dots,\lambda_n},\\
h_r(\lambda)&=\FB{n}{\ol{\p_{d,r}}^{ma_1},\dots,\ol{\p_{d,r}}^{ma_n};\p_{d,r}^{mb_1},\dots,\p_{d,r}^{mb_n}}{\p_{d,r}^{m(b_1+\cdots+b_n+c)}}{\lambda_1,\dots,\lambda_n}.
\end{align*}
\begin{enumerate}
\item Suppose that $gcd(d,a)=gcd(d,c_i)=1$ for all $i$. 
Then,
\begin{align*}
L(S^1_{A,\lambda},\chi^m;t)=\begin{cases}
\dfrac{1}{1-q^nt}&(m=0),\vspace{5pt}\\
\exp\Big(\sum_{r=1}^\infty \big( \prod_{i=1}^n -j(\p_d^{mb_i},\p_d^{mc_i})^r\big)\cdot f_r(\lambda)\dfrac{t^r}{r}\Big)&(m\neq0).
\end{cases}
\end{align*}

\item Suppose that $gcd(d, a) = gcd(d, b_i) = 1$ for all $i$.
Then,
\begin{align*}
L(S^2_{A,\lambda},\chi^m;t)=\begin{cases}
\dfrac{1}{1-q^nt}&(m=0),\vspace{5pt}\\
\exp\Big(\sum_{r=1}^\infty (-1)^n J_A^r \cdot g_r(\lambda)\dfrac{t^r}{r}\Big)&(m\neq0),
\end{cases}
\end{align*}
where $J_A$ is as in Theorem \ref{N of XA} (ii).

\item Suppose that $gcd(d,a_i)=gcd(d,c)=1$ for all $i$. 
Then,
\begin{align*}
L(S_{B,\lambda},\chi^m;t)=\begin{cases}
\dfrac{1}{1-q^nt}&(m=0),\vspace{5pt}\\
\exp\Big(\sum_{r=1}^\infty  (-1)^nJ_B^r \cdot h_r(\lambda)\dfrac{t^r}{r}\Big)&(m\neq0),
\end{cases}
\end{align*}
where $J_B$ is as in Theorem \ref{N of XB}.
\end{enumerate}
\end{cor}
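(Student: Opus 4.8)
The final statement is Corollary \ref{L of SAB}, which expresses the Artin $L$-functions of the hypersurfaces $S_{A,\lambda}$ and $S_{B,\lambda}$ in terms of Lauricella functions $F_A^{(n)}$ and $F_B^{(n)}$ over the extensions $\k_r$. The plan is to follow verbatim the proof of Corollary \ref{cor 1}, replacing the role of Theorem \ref{N of CD}/\ref{N of XD} by Theorems \ref{N of XA} and \ref{N of XB}. The whole argument is a formal bookkeeping step: it turns a formula for $N_1$ over $\k$ into a formula for $N_r$ over $\k_r$, and then plugs that into the definition of the Artin $L$-function.

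First I would note that Theorems \ref{N of XA} and \ref{N of XB} are stated for the base field $\k$ with the fixed generator $\p$ of $\khat$, but they hold verbatim with $\k$ replaced by any finite extension $\k_r$ and $\p$ replaced by a generator $\p'$ of $\widehat{\k_r^\times}$ chosen so that $\p'|_{\k^\times}=\p$. Under this substitution the character $\p_d=\p^{(q-1)/d}$ becomes $\p'^{(q^r-1)/d}$, which by definition is exactly $\p_{d,r}=\p_d\circ N_{\k_r/\k}$ since the norm map sends a generator of $\widehat{\k_r^\times}$ lying over $\p$ to $\p$. Hence $N_r(S_{A,\lambda};\chi^m)$ equals $q^{nr}$ for $m=0$, and for $m\neq 0$ equals $\big(\prod_{i=1}^n j_{\k_r}(\p_{d,r}^{mb_i},\p_{d,r}^{mc_i})\big) f_r(\lambda)$, where $j_{\k_r}$ denotes the Jacobi sum over $\k_r$; likewise for $S_{B,\lambda}$ with $g_r(\lambda)$ and the Jacobi sum $J_B$ computed over $\k_r$.

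Next I would invoke the Davenport--Hasse relation $g(\eta\circ N_{\k_r/\k})=g(\eta)^r$, exactly as in the proof of Corollary \ref{cor 1}. Combined with the expression \eqref{J=G} of Jacobi sums in terms of Gauss sums (note that all characters appearing are nontrivial under the hypotheses $\gcd(d,c_i)=1$, resp.\ $\gcd(d,c)=1$, so the nontrivial branch of \eqref{J=G} applies), this gives
\[
j_{\k_r}(\p_{d,r}^{mb_i},\p_{d,r}^{mc_i})=j(\p_d^{mb_i},\p_d^{mc_i})^r,\qquad
j_{\k_r}(\p_{d,r}^{mb_1},\dots,\p_{d,r}^{mb_n},\p_{d,r}^{mc})=J_B^r.
\]
Substituting these into the $N_r$ formulas and then into the defining series $L(V,\chi^m;t)=\exp\!\big(\sum_{r\ge 1} N_r(V;\chi^m)\,t^r/r\big)$ yields the stated expressions; for $m=0$ one uses $\sum_{r\ge1} q^{nr}t^r/r=-\log(1-q^nt)$.

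The argument presents no real obstacle: everything is formal once Theorems \ref{N of XA}, \ref{N of XB} and Davenport--Hasse are in hand. The only point requiring a moment's care is the compatibility between the generators $\p'$ over $\k_r$ and the induced characters $\p_{d,r}$, together with checking that the nontriviality hypotheses needed to use \eqref{J=G} over $\k_r$ follow from the $\gcd$ conditions in the statement (they do, since $\gcd(d,b_i)$, $\gcd(d,c_i)$, etc., being coprime to $d$ forces $\p_{d,r}^{mb_i}$, $\p_{d,r}^{mc_i}$ and their relevant products to be nontrivial for $m\not\equiv 0 \pmod d$). I would therefore simply write: the corollary follows from Theorems \ref{N of XA} and \ref{N of XB} together with the Davenport--Hasse relation, exactly as Corollary \ref{cor 1} follows from Theorems \ref{N of CD} and \ref{N of XD}.
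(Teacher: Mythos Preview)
Your proposal is correct and matches the paper's approach exactly: the paper simply states that the corollary follows ``similarly as Corollary~\ref{cor 1}'', i.e., by applying Theorems~\ref{N of XA} and~\ref{N of XB} over $\k_r$ and invoking Davenport--Hasse, which is precisely what you do. One small inaccuracy: the hypotheses do \emph{not} assume $\gcd(d,b_i)=1$, but this is harmless since \eqref{J=G} only requires that not all characters be trivial, and $\gcd(d,c_i)=1$ (resp.\ $\gcd(d,c)=1$) already guarantees this.
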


\subsection{Algebraic varieties related to $F_C$}
Let $d, a, b, c_1,\dots,c_n \in \Z_{\geq1}$ be integers and let $\lambda_1,\dots,\lambda_n\in\k^\times$. 
Write $S_{C,\lambda}$ for the $n$-dimensional affine hypersurface over $\k$ defined by the equation
$$y^d=\Big(\prod_{i=1}^n x_i^{c_i}\Big) \Big(1-\sum_{i=1}^n x_i\Big)^a \Big(\prod_{i=1}^n x_i -\sum_{i=1}^n \lambda_i \prod_{j\neq i}x_j\Big)^b.$$
Similarly as in the previous subsections, suppose that $d\mid q-1$ and hence, the group $\mu_d$ acts on $S_{C,\lambda}$, and we obtain the following theorem and corollary.

\begin{thm}\label{N of SC}
Suppose that $gcd (d,a)= gcd (d,b)=1$. Then, 
\begin{align*}
N_1(S_{C,\lambda};\chi^m)=\begin{cases}
q^n&(m=0),\vspace{5pt}\\
\displaystyle (-1)^nJ_C\cdot \FC{n}{\ol{\p_d}^{m(a+nb+\sum_{i=1}^n c_i)};\ol{\p_d}^{mb}}{\ol{\p_d}^{m(b+c_1)},\dots,\ol{\p_d}^{m(b+c_n)}}{\lambda_1,\dots,\lambda_n}&(m\neq 0),
\end{cases}
\end{align*}
where $J_C=j(\p_d^{ma},\p_d^{m(b+c_1)},\dots,\p_d^{m(b+c_n)})$.
\end{thm}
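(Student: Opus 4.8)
The plan is to follow the template of the proofs of Theorems~\ref{N of CD} and~\ref{N of XD}. Write $f(x_1,\dots,x_n)=\big(\prod_{i=1}^n x_i^{c_i}\big)\big(1-\sum_{i=1}^n x_i\big)^a\big(\prod_{i=1}^n x_i-\sum_{i=1}^n\lambda_i\prod_{j\neq i}x_j\big)^b$ for the right-hand side of the defining equation of $S_{C,\lambda}$. Exactly as in the proof of Theorem~\ref{N of XD}, summing the fibre count over $\mu_d$ against $\chi^m$ collapses the $\chi^m$-eigenspace count to a single multiplicative character sum:
\[
N_1(S_{C,\lambda};\chi^0)=\#\k^n=q^n,\qquad N_1(S_{C,\lambda};\chi^m)=\sum_{x_1,\dots,x_n\in\k}\p_d^m\big(f(x_1,\dots,x_n)\big)\quad(m\neq0),
\]
the second identity using $\chi^m(f(x)^{(q-1)/d})=\p_d^m(f(x))$ together with $\p_d^m(0)=0$.

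For $m\neq0$ (so $d\nmid m$), since the convention $\eta(0)=0$ holds for every character, including the trivial one, each summand in which some $x_i$ vanishes drops out, and the sum runs effectively over $(\k^\times)^n$. On that range I would use the factorization
\[
\prod_{i=1}^n x_i-\sum_{i=1}^n\lambda_i\prod_{j\neq i}x_j=\Big(\prod_{i=1}^n x_i\Big)\Big(1-\sum_{i=1}^n\dfrac{\lambda_i}{x_i}\Big),
\]
which is precisely the feature of the equation of $S_{C,\lambda}$ mirroring the change of variables $t_i\mapsto z_i/t_i$ in the complex integral for $F_C^{(n)}$. Applying $\p_d^m$ and regrouping the factors via $\p_d^{mc_i}(x_i)\p_d^{mb}(x_i)=\p_d^{m(b+c_i)}(x_i)$ turns the character sum into
\[
N_1(S_{C,\lambda};\chi^m)=\sum_{x_1,\dots,x_n\in\k^\times}\Big(\prod_{i=1}^n\p_d^{m(b+c_i)}(x_i)\Big)\,\p_d^{ma}\Big(1-\sum_{i=1}^n x_i\Big)\,\p_d^{mb}\Big(1-\sum_{i=1}^n\dfrac{\lambda_i}{x_i}\Big).
\]

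This is exactly the right-hand side of Theorem~\ref{FC integral} with $\b=\ol{\p_d}^{mb}$, $\c_i=\ol{\p_d}^{m(b+c_i)}$ and $\a=\ol{\p_d}^{m(a+nb+\sum_i c_i)}$, so that $\ol{\b}=\p_d^{mb}$, $\ol{\c_i}=\p_d^{m(b+c_i)}$, $\ol{\a}\c_1\cdots\c_n=\p_d^{ma}$ and $\ol{\a}=\p_d^{m(a+nb+\sum_i c_i)}$; these are the parameters in the statement. The hypotheses of Theorem~\ref{FC integral} are met because $\b=\ol{\p_d}^{mb}$ and $\ol{\a}\c_1\cdots\c_n=\p_d^{ma}$ are both non-trivial, using $\gcd(d,b)=\gcd(d,a)=1$ and $d\nmid m$. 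Finally I would identify the Gauss-sum prefactor $\big(\prod_i g(\ol{\c_i})\big)g(\ol{\a}\c_1\cdots\c_n)/g^\circ(\ol{\a})$ appearing in Theorem~\ref{FC integral} with $J_C=j(\p_d^{ma},\p_d^{m(b+c_1)},\dots,\p_d^{m(b+c_n)})$ by means of \eqref{J=G}: this is legitimate since $\p_d^{ma}\neq\e$, and the product of the arguments of $J_C$ equals $\p_d^{m(a+nb+\sum_i c_i)}=\ol{\a}$. Dividing by $J_C$ yields the asserted formula.

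I do not expect a genuine obstacle: the content is entirely in (i) spotting the correct parameter dictionary for Theorem~\ref{FC integral} and (ii) the elementary factorization above, while the reduction of the $\chi^m$-point count to a character sum is verbatim the one already carried out for Theorems~\ref{N of CD} and~\ref{N of XD}. The only places demanding care are the exponent bookkeeping in the parameter matching and checking that the non-triviality hypotheses of Theorem~\ref{FC integral} follow from $\gcd(d,a)=\gcd(d,b)=1$; note that no condition on the $c_i$ is needed, since the terms with $x_i=0$ drop out regardless of whether $\p_d^{m(b+c_i)}$ is trivial.
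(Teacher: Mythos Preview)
Your proof is correct and follows exactly the approach the paper indicates (the paper omits the details, saying the result is obtained ``similarly as in the previous subsections''): reduce $N_1(S_{C,\lambda};\chi^m)$ to the character sum $\sum_{x\in\k^n}\p_d^m(f(x))$, restrict to $(\k^\times)^n$, use the factorization $\prod_i x_i-\sum_i\lambda_i\prod_{j\neq i}x_j=(\prod_i x_i)(1-\sum_i\lambda_i/x_i)$, and match with Theorem~\ref{FC integral}. The parameter identification, the verification of the hypotheses $\b,\ol\a\c_1\cdots\c_n\neq\e$ from $\gcd(d,a)=\gcd(d,b)=1$, and the identification of the Gauss-sum prefactor with $J_C$ via \eqref{J=G} are all carried out correctly.
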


\begin{cor}\label{L of SC}
Put 
$$f_r(\lambda)=\FC{n}{\ol{\p_{d,r}}^{m(a+nb+\sum_{i=1}^n c_i)};\ol{\p_{d,r}}^{mb}}{\ol{\p_{d,r}}^{m(b+c_1)},\dots,\ol{\p_{d,r}}^{m(b+c_n)}}{\lambda_1,\dots,\lambda_n},$$
where $\lambda_1,\dots,\lambda_n\in\k^\times$.
Suppose that $gcd (d,a)= gcd (d,b)=1$. Then, 
\begin{align*}
L(S_{C,\lambda},\chi^m;t)=\begin{cases}
\dfrac{1}{1-q^nt}&(m=0),\vspace{5pt}\\
\exp \Big( \sum_{r=1}^\infty (-1)^nJ_C^r \cdot f_r(\lambda)\dfrac{t^r}{r}\Big)&(m\neq 0),
\end{cases}
\end{align*}
where $J_C$ is as in Theorem \ref{N of SC}.
\end{cor}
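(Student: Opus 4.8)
The plan is to deduce Corollary \ref{L of SC} from Theorem \ref{N of SC} by exactly the formal manipulation used to pass from Theorems \ref{N of CD} and \ref{N of XD} to Corollary \ref{cor 1}: the substantive content is already contained in Theorem \ref{N of SC}, so what remains is bookkeeping with the Davenport--Hasse relation and the definition of the Artin $L$-function.

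First I would fix $r\geq1$ and choose a generator $\p'$ of $\widehat{\k_r^\times}$ restricting to $\p$ on $\k^\times$, so that $\p_d$ gets replaced by $\p_{d,r}$. Applying Theorem \ref{N of SC} over $\k_r$ — its hypotheses $\gcd(d,a)=\gcd(d,b)=1$ being unaffected — gives
\[
N_r(S_{C,\lambda};\chi^0)=(q^n)^r,\qquad
N_r(S_{C,\lambda};\chi^m)=J_{C,r}\cdot f_r(\lambda)\ \ (m\neq0),
\]
where $J_{C,r}:=j\big(\p_{d,r}^{ma},\p_{d,r}^{m(b+c_1)},\dots,\p_{d,r}^{m(b+c_n)}\big)$ and $f_r(\lambda)$ is exactly the $F_C^{(n)}$ over $\k_r$ occurring in the statement.

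Next I would invoke the Davenport--Hasse relation $g(\eta\circ N_{\k_r/\k})=g(\eta)^r$ (cf.\ \cite{weil}); note also $g^\circ(\eta\circ N_{\k_r/\k})=g^\circ(\eta)^r$, since $N_{\k_r/\k}$ is surjective and hence $\d(\eta\circ N_{\k_r/\k})=\d(\eta)$. Because $m\not\equiv0\pmod{d}$ and $\gcd(d,a)=1$, the character $\p_d^{ma}$ is non-trivial, so both $J_C$ and $J_{C,r}$ are given by the ``otherwise'' branch of \eqref{J=G}, i.e.\ by the same ratio of products of (ordinary and $\circ$-)Gauss sums; applying $g(\p_{d,r}^{\bullet})=g(\p_d^{\bullet})^r$ factor by factor then yields $J_{C,r}=J_C^r$. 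Substituting into $L(S_{C,\lambda},\chi^m;t)=\exp\big(\sum_{r\geq1}N_r(S_{C,\lambda};\chi^m)\,t^r/r\big)$ gives $\exp\big(\sum_{r\geq1}(q^n t)^r/r\big)=(1-q^nt)^{-1}$ when $m=0$, and $\exp\big(\sum_{r\geq1}J_C^r f_r(\lambda)\,t^r/r\big)$ when $m\neq0$, which is the assertion.

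I do not expect a genuine obstacle here: the only step that is not purely mechanical is the identity $J_{C,r}=J_C^r$, where one must confirm that the standing hypotheses exclude the degenerate (all-characters-trivial) case of \eqref{J=G}, so that the Gauss-sum formula for the Jacobi sum applies and Davenport--Hasse can be used termwise — and this is precisely what $m\neq0$ and $\gcd(d,a)=1$ guarantee. Everything else is word-for-word the argument of Corollary \ref{cor 1}.
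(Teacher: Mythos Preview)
Your proposal is correct and is precisely the argument the paper has in mind: the corollary is stated without proof immediately after Theorem \ref{N of SC}, and the reader is meant to repeat verbatim the derivation of Corollary \ref{cor 1}, applying Theorem \ref{N of SC} over $\k_r$ and using Davenport--Hasse to turn $J_{C,r}$ into $J_C^r$. Your check that $\p_d^{ma}\neq\e$ forces the ``otherwise'' branch of \eqref{J=G} (so that Davenport--Hasse applies termwise, including to the $g^\circ$ in the denominator) is exactly the one non-formal verification required.
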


Suppose that $\lambda_1,\lambda_2\neq 1$. Let $S_{4,\lambda}$ be the affine surface over $\k$ defined by the equation
\begin{align*}
y^d=&x_1^{\langle a\rangle} x_2^{\langle b \rangle} (1-x_1)^{\langle c_1-a\rangle}(1-x_2)^{\langle c_2-b\rangle}\\
&\times(1-\lambda_1x_1)^{\langle a-c_1-c_2\rangle}(1-\lambda_2x_2)^{\langle b-c_1-c_2\rangle}(1-\lambda_1x_1-\lambda_2x_2)^{\langle c_1+c_2-a-b\rangle}.
\end{align*}
Here, for $n\in\Z$, $\langle n\rangle\in\{0,\dots,d-1\}$ denotes the representative of $n$ mod $d$.
%\begin{equation*}
%\langle n\rangle =\begin{cases}
%n&(n\geq0),\vspace{5pt}\\
%\left\lfloor\dfrac{d-n}{d}\right\rfloor d+n&(n<0).\\
%\end{cases}
%\end{equation*}

\begin{thm}\label{N of X4}Suppose that $gcd(d, a)=gcd(d,b)=gcd(d, c_i-a)=gcd(d, c_i-b)=1$ for $i=1, 2$.
Then, 
\begin{align*}
&N_1(S_{4,\lambda};\chi^m)\\
&=\begin{cases}
q^2&(m=0),\vspace{5pt}\\
J\cdot F_4\big(\p_d^{ma};\p_d^{mb};\p_d^{mc_1},\p_d^{mc_2};\lambda_1(1-\lambda_2),\lambda_2(1-\lambda_1)\big)\\
+\sum_{i=0}^2S_i(\lambda_1,\lambda_2)&(m\neq0).
\end{cases}
\end{align*}
Here, $J$ and $S_i$ are as in Theorem \ref{F4 int ana} with $\a=\p_d^{ma},$ $\b=\p_d^{mb}$, $\c_i=\p_d^{mc_i}$.
\end{thm}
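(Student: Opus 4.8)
The plan is to imitate the proofs of Theorems \ref{N of CD} and \ref{N of XD}: reduce the count of $\chi^m$-twisted fixed points to a two-variable character sum, and then recognise that sum via Theorem \ref{F4 int ana}.

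First I would set $f(x_1,x_2)$ equal to the right-hand side of the defining equation of $S_{4,\lambda}$ and, exactly as in the proof of Theorem \ref{N of XD}, use that a point $(x_1,x_2,y)\in S_{4,\lambda}(\ol{\k})$ satisfies $F(x_1,x_2,y)=(x_1,x_2,\xi y)$ for some $\xi\in\mu_d$ precisely when $x_1,x_2\in\k$ and either $y=0$ or $y^{q-1}\in\mu_d$. This yields
\begin{align*}
N_1(S_{4,\lambda};\chi^m)&=\sum_{\xi\in\mu_d}\chi^m(\xi)\#\{(x_1,x_2)\in\k^2\mid f(x_1,x_2)^{(q-1)/d}=\xi\}\\
&\quad+\Big(\frac{1}{d}\sum_{\xi\in\mu_d}\chi^m(\xi)\Big)\#\{(x_1,x_2)\in\k^2\mid f(x_1,x_2)=0\}.
\end{align*}
For $m=0$ both terms together count all of $\k^2$, so $N_1(S_{4,\lambda};\chi^0)=q^2$; for $m\neq0$ the last term vanishes and, since $\chi=\p|_{\mu_d}$ and $\p_d=\p^{(q-1)/d}$, the first term equals $\sum_{x_1,x_2\in\k}\p_d^m(f(x_1,x_2))$.

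Next I would expand $\p_d^m(f(x_1,x_2))$, using $\p_d^{m\langle n\rangle}=\p_d^{mn}$ since $\p_d$ has order $d$, to obtain, with $\a=\p_d^{ma}$, $\b=\p_d^{mb}$, $\c_i=\p_d^{mc_i}$,
\begin{align*}
\p_d^m(f(x_1,x_2))&=\a(x_1)\b(x_2)\ol{\a}\c_1(1-x_1)\ol{\b}\c_2(1-x_2)\\
&\quad\times\a\ol{\c_1\c_2}(1-\lambda_1x_1)\b\ol{\c_1\c_2}(1-\lambda_2x_2)\ol{\a\b}\c_1\c_2(1-\lambda_1x_1-\lambda_2x_2).
\end{align*}
The hypotheses $\gcd(d,a)=\gcd(d,b)=1$ give $\a,\b\neq\e$ (so the terms with $x_1=0$ or $x_2=0$ vanish and the sum runs effectively over $(\k^\times)^2$), while $\gcd(d,c_i-a)=\gcd(d,c_i-b)=1$ give $\a\neq\c_i$ and $\b\neq\c_i$; hence $\a,\b\notin\{\e,\c_1,\c_2\}$, which is exactly the hypothesis of Theorem \ref{F4 int ana}. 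Applying that theorem with $(x,y)=(\lambda_1,\lambda_2)$ (legitimate since $\lambda_1,\lambda_2\in\k^\times\setminus\{1\}$) and $(u,v)=(x_1,x_2)$ identifies $\sum_{x_1,x_2\in\k^\times}\p_d^m(f(x_1,x_2))$ with $J\cdot F_4(\a;\b;\c_1,\c_2;\lambda_1(1-\lambda_2),\lambda_2(1-\lambda_1))+S_0(\lambda_1,\lambda_2)+S_1(\lambda_1,\lambda_2)+S_2(\lambda_1,\lambda_2)$, which is the asserted formula.

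Given Theorem \ref{F4 int ana}, the argument is essentially routine; the only steps needing care are the bookkeeping that matches the seven characters against the seven exponents of $f$ (keeping track of the representatives $\langle\cdot\rangle\bmod d$) and the translation of the gcd conditions into the character conditions $\a,\b\notin\{\e,\c_1,\c_2\}$ required to invoke Theorem \ref{F4 int ana}. I expect no genuine obstacle beyond this, since all the analytic content has already been absorbed into Theorem \ref{F4 int ana} (and, through it, Proposition \ref{key prop}).
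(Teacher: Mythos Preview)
Your proposal is correct and follows essentially the same route as the paper: reduce $N_1(S_{4,\lambda};\chi^m)$ to the character sum $\sum_{x_1,x_2}\p_d^m(f(x_1,x_2))$ as in Theorems \ref{N of CD} and \ref{N of XD}, use $\p_d^{\langle n\rangle}=\p_d^n$, and then apply Theorem \ref{F4 int ana} with $(x,y)=(\lambda_1,\lambda_2)$. Your added remarks that $\a,\b\neq\e$ makes the sum effectively over $(\k^\times)^2$ and that the gcd hypotheses translate into $\a,\b\notin\{\e,\c_1,\c_2\}$ are accurate and make explicit what the paper leaves implicit.
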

\begin{proof}
Similarly as in the proof of Theorem \ref{N of XD}, we have
\begin{equation*}
N_1(S_{4,\lambda};\chi^0)=q^2.
\end{equation*}
For $m\neq0$,
\begin{align*}
&N_1(S_{4,\lambda};\chi^m)=\sum_{u,v}\p_d^{ma}(u)\p_d^{mb}(v)\p_d^{m(c_1-a)}(1-u)\p_d^{m(c_2-b)}(1-v)\\
&\times\p_d^{m(a-c_1-c_2)}(1-\lambda_1u) \p_d^{m(b-c_1-c_2)}(1-\lambda_2v)\p_d^{m(c_1+c_2-a-b)}(1-\lambda_1u-\lambda_2v).
\end{align*}
Here, note that $\p_d^{\langle n\rangle}=\p_d^n$. 
Thus, the theorem follows by Theorem \ref{F4 int ana}.
\end{proof}

\begin{cor}\label{L of S4} Let the assumptions and notations be as in Theorem \ref{N of X4}. Put 
$$f_r(\lambda_1,\lambda_2)=F_4\big(\p_{d,r}^{ma};\p_{d,r}^{mb};\p_{d,r}^{mc_1},\p_{d,r}^{mc_2};\lambda_1(1-\lambda_2),\lambda_2(1-\lambda_1)\big).$$
Then, 
$$L(S_{4,\lambda},\chi^m;t)=\begin{cases} \dfrac{1}{1-q^2t}&(m=0),\vspace{5pt}\\ 
\exp\Big(\sum_{r=1}^\infty J^r \cdot f_r(\lambda_1,\lambda_2)\dfrac{t^r}{r}\Big)\prod_{i=0}^2 (1-S_it)&(m\neq0).\end{cases}$$
\end{cor}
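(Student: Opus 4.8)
The plan is to follow the template of Corollary \ref{cor 1}: compute $N_r(S_{4,\lambda};\chi^m)$ for all $r\geq 1$ from Theorem \ref{N of X4} applied over $\k_r$, rewrite the Jacobi sums and the terms $S_i$ over $\k_r$ in terms of their counterparts over $\k$ by Davenport--Hasse, and then read off the Artin $L$-function from its defining power series.

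First I would fix, for each $r\geq1$, a generator $\p'$ of $\widehat{\k_r^\times}$ with $\p'|_{\k^\times}=\p$, so that $\p_d$ is replaced by $(\p')^{(q^r-1)/d}=\p_{d,r}$, exactly as in the proof of Corollary \ref{cor 1}. The congruence hypotheses $\gcd(d,a)=\gcd(d,b)=\gcd(d,c_i-a)=\gcd(d,c_i-b)=1$ and the conditions $\lambda_1,\lambda_2\in\k^\times\setminus\{1\}$ are unaffected by this base change, so Theorem \ref{N of X4} over $\k_r$ gives $N_r(S_{4,\lambda};\chi^0)=q^{2r}$ and, for $m\neq0$,
$$N_r(S_{4,\lambda};\chi^m)=J^{(r)}\,f_r(\lambda_1,\lambda_2)+\sum_{i=0}^{2}S_i^{(r)}(\lambda_1,\lambda_2),$$
where $J^{(r)}$ and $S_i^{(r)}$ denote the quantities $J$ and $S_i$ of Theorem \ref{F4 int ana} formed from the characters $\p_{d,r}^{ma},\p_{d,r}^{mb},\p_{d,r}^{mc_1},\p_{d,r}^{mc_2}$; the $F_4$ occurring there is precisely $f_r(\lambda_1,\lambda_2)$ because $\lambda_1,\lambda_2$ lie in $\k^\times$.

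Next I would verify $J^{(r)}=J^r$ and $S_i^{(r)}=S_i^r$. Each of $J$ and $S_i$ is a product of Gauss sums, Jacobi sums, and values of characters at the elements $-1,\lambda_1,\lambda_2,\lambda_1-1,\lambda_2-1$ of $\k^\times$. Since $N_{\k_r/\k}(x)=x^r$ for $x\in\k$, one has $\eta_r(x)=\eta(x)^r$ for $x\in\k^\times$, where $\eta_r:=\eta\circ N_{\k_r/\k}$; combined with the Davenport--Hasse relation $g(\eta_r)=g(\eta)^r$, the identity $\d(\eta_r)=\d(\eta)$ (hence $g^\circ(\eta_r)=g^\circ(\eta)^r$), and \eqref{J=G}, this forces every factor to be raised to the $r$th power. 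The one delicate point is that the first branch of \eqref{J=G} produces a value that is not multiplicative under $q\mapsto q^r$; here the gcd hypotheses guarantee that $\p_d^{ma},\p_d^{mb},\p_d^{m(c_i-a)},\p_d^{m(c_i-b)}$ are nontrivial for $m\neq0$, so in each Jacobi sum entering $J$ and the $S_i$ at least one argument is nontrivial, the second branch of \eqref{J=G} applies, and $j(\eta_{1,r},\dots,\eta_{k,r})=j(\eta_1,\dots,\eta_k)^r$ holds uniformly in $r$.

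Finally, I would substitute the resulting formula for $N_r(S_{4,\lambda};\chi^m)$ into $L(S_{4,\lambda},\chi^m;t)=\exp\big(\sum_{r\geq1}N_r(S_{4,\lambda};\chi^m)\,t^r/r\big)$ and use the elementary identity $\sum_{r\geq1}z^rt^r/r=-\log(1-zt)$: taking $z=q^2$ gives $1/(1-q^2t)$ when $m=0$, and when $m\neq0$ the three terms $S_i^r$ assemble into the finite product factor while the remaining sum contributes $\exp\big(\sum_{r\geq1}J^r f_r(\lambda_1,\lambda_2)\,t^r/r\big)$, which is the asserted formula. As in Corollaries \ref{cor 1}, \ref{L of SAB} and \ref{L of SC}, this last step is purely formal; the substantive part is the Davenport--Hasse bookkeeping of the previous paragraph, which is where the only (minor) obstacle lies.
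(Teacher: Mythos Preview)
Your proposal is correct and follows essentially the same route as the paper's proof: apply Theorem \ref{N of X4} over $\k_r$, use Davenport--Hasse together with $\eta_r(x)=\eta(x)^r$ for $x\in\k$ to get $J^{(r)}=J^r$ and $S_i^{(r)}=S_i^r$, and then read off the $L$-function formally. Your added check that the hypotheses force the second branch of \eqref{J=G} in every Jacobi sum appearing in $J$ and the $S_i$ is a useful piece of bookkeeping that the paper leaves implicit.
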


\begin{proof}
Note that, for $\eta\in\widehat{\k^\times}$ and $\eta_r=\eta\circ N_{\k_r/\k}$, if $\lambda\in\k$ then $\eta_r(\lambda)=\eta^r(\lambda)$. Similarly as Corollary \ref{cor 1}, we obtain 
$$N_r(S_{4,\lambda},\chi^m;t)=\begin{cases} q^{2r}&(m=0),\vspace{5pt}\\ 
J^rf_r(\lambda_1,\lambda_2)+\sum_{i=0}^2S_i^r(\lambda_1,\lambda_2)&(m\neq0),
\end{cases}$$
by Theorem \ref{N of X4}, and hence the corollary follows formally.
\end{proof}

\section*{Acknowledgements}
The author would like to thank Noriyuki Otsubo for his constant support, and would also like to thank Yoshiaki Goto for his helpful comments about integral representations of Lauricella functions $F_C$. This work was supported by JST SPRING, Grant Number JPMJSP2109.

\end{document}